\documentclass[11pt,a4paper,twoside]{amsart}
\usepackage{amssymb,amsmath,amsthm,graphicx,color}
\theoremstyle{plain}
\newtheorem{theorem}{Theorem}[section]
\newtheorem{proposition}[theorem]{Proposition}
\newtheorem{definition}[theorem]{Definition}
\newtheorem{lemma}[theorem]{Lemma}

\newtheorem{assumption}[theorem]{Assumption}
\theoremstyle{remark}
\newtheorem{remark}[theorem]{Remark}

\usepackage[
   bookmarks=true,
   bookmarksnumbered=false,
   bookmarkstype=toc]{hyperref}

\allowdisplaybreaks[4]
\numberwithin{equation}{section}

\newcommand{\C}{\mathbb{C}}
\newcommand{\R}{\mathbb{R}}
\newcommand{\Z}{\mathbb{Z}}

\renewcommand{\Im}{\operatorname{Im}}

\newcommand{\I}{\infty}
\newcommand{\abs}[1]{\left\lvert #1\right\rvert}
\newcommand{\Jbr}[1]{\left\langle #1 \right\rangle}
\newcommand{\norm}[1]{\left\lVert #1\right\rVert}

\newcommand{\IN}{\quad\text{in }}

\def\({\left(}
\def\){\right)}
\def\<{\left\langle}
\def\>{\right\rangle}
\def\le{\leqslant}
\def\ge{\geqslant}

\def\d{{\partial}}
\def\l{\lambda}

\newcommand{\eps}{\varepsilon}


\DeclareMathOperator{\supp}{supp}


\newcommand{\pt}{\partial}

\begin{document}

\title[Quadratic 
nonlinear Klein-Gordon equation in 2D]
{Long range scattering for the\\
complex-valued Klein-Gordon equation\\ 
with quadratic nonlinearity\\
in two dimensions}
\author[S.Masaki, J.Segata, K.Uriya]
{Satoshi Masaki, Jun-ichi Segata and Kota Uriya}

\maketitle

\begin{abstract}
In this paper, we study large time behavior of complex-valued solutions to nonlinear Klein-Gordon equation with a
gauge invariant quadratic nonlinearity in two spatial dimensions.
To find a possible asymptotic behavior, we consider the final value problem.
It turns out that one possible behavior is a linear solution with a logarithmic phase correction as in the real-valued case.
However, 
the shape of the logarithmic correction term has one more parameter which is also given by the final data.
In the real case the parameter is constant so one cannot see its effect. However, in the complex case it varies in general.
The one dimensional case is also discussed.
\end{abstract}

\section{Introduction}

This paper is devoted to the study of asymptotic behavior of solutions to nonlinear Klein-Gordon equation 
\begin{equation}\label{E:NLKG2}
(\square_{t,x} + 1)u = \l |u|u,\quad (t,x) \in \R^{1+2},
\end{equation}
where $\square_{t,x} = \partial_t^2 - \partial_{x_1}^2 - \partial_{x_2}^2$ is d'Alembertian and $\l$ is a given real constant.
The aim of this paper is to find a possible asymptotic behavior of complex-valued solutions.
For this purpose, we consider the final value problem of \eqref{E:NLKG2}, that is, we solve the equation with the condition
\[
	u-u_{\mathrm{ap}} \to 0 \IN L^2 \quad\text{ as }t\to\I
\]
for a given asymptotic profile $u_{\mathrm{ap}}(t,x)$.
If the asymptotic behavior is correctly chosen then one can find a solution to \eqref{E:NLKG2} which actually behaves like the profile for large time.

There is a number of articles on asymptotic behavior of nonlinear Klein-Gordon equation.
Especially, cubic equations in one dimension and quadratic equations in two dimensions are known to be critical in view of 
large time behavior of the solution (see \cite{G,Mat}), and hence are extensively studied.
Two dimensional quadratic case is studied in \cite{DFX,KOS,KS,OTT} with polynomial type nonlinearities.
As for the gauge invariant quadratic nonlinearity, the first two authors \cite{MS2} considered the final value problem and
derived the asymptotic profile. For other related results, see references in \cite{MS2}.

However, there are few previous results on the behavior of complex-valued solutions in the critical nonlinearity case.
For one dimensional cubic case, Sunagawa \cite{S1} derived a sharp time decay estimate.
In the present paper, we show that \eqref{E:NLKG2} admits a solution which behaves like a linear solution with a logarithmic phase correction as time goes to infinity.
Although the type of the behavior is the same as in the real-valued case,  
it will turn out that the shape of the phase correction is much complicated than in a real-valued case.

We also consider the cubic equation in one dimension for comparison.
Remark that two dimensional quadratic case has two different difficulties.
One comes from the fact that a solution is complex-valued. The other is related to the fact that the nonlinearity is not a polynomial.
By considering the 1d cubic case, we can separate these two and focus on the first difficulty.
Comparing the 1d cubic case and 2d quadratic case, the second difficulty becomes clear.

Note that the complex valued nonlinear Klein-Gordon equation arises in various fields of 
physics. For example, the nonlinear Dirac equation which describes the self interaction of a Dirac field 
can be reduced to the system of the complex valued nonlinear Klein-Gordon equations. Therefore 
we believe that our study will aid in understanding the long time behavior of solution to 
various physical models.

\subsection{Main result}

It is well-known that the asymptotic behavior of a solution to the linear Klein-Gordon equation
\begin{equation}\label{E:lKG}
	\left\{
	\begin{aligned}
	& (\square_{t,x} + 1) v = 0, \quad (t,x) \in \R^{1+d},\\
	& v(0)= \phi_0, \quad \partial_t v(0) = \phi_1
	\end{aligned}
	\right.
\end{equation}
for large $t$
is given by
\[
	v_{\mathrm{lin}}(t,x) = t^{-\frac{d}2} {\bf 1}_{\{|x|<t\}}(t,x) [A_1(\mu) e^{i\theta} + B_1(\mu)e^{-i\theta}],
\]
where $A_1$ and $B_1$ are explicitly given by Fourier transform of $\phi_0$ and $\phi_1$ (see \eqref{E:defA1} and \eqref{E:defB1}, below) and
\begin{align}\label{E:thetamu}\textstyle
	\theta&{} =- \sqrt{t^2- |x|^2}, & \mu &{} \textstyle= \frac{x}{\sqrt{t^2-|x|^2}} \in \R^d
\end{align} 
for $t\ge 1$ and $|x|<t$.

We give our asymptotic profile as a modification of $v_{\mathrm{lin}}(t,x)$.
It is hence convenient to state our assumption in terms of $A_1$ and $B_1$.

To state our main result, we introduce notations. For a vector $x \in \R^d$, we denote $\Jbr{x} = (1+|x|^2)^{1/2}$.
For a measurable set $\Omega$ and $1\le p \le \I$, $L^p(\Omega)$ stands for the usual Lebesgue space.
We simply write $L^p$ if $\Omega = \R^d$ and if there is no fear of confusion on the choice of $d$.
Let us make a definition of a function space which we work with.
\begin{definition}[Weighted Sobolev space]
For $k \in \Z_{\ge0}$ and $s\ge 0$,
we define a weighted Sobolev space $H^{k,s}$ by 
\begin{align*}
	H^{k,s}(\R^d) := {}& \{ f\in L^2(\R^d) \ |\ \norm{f}_{H^{k,s}} <\I \},\\
	\norm{f}_{H^{k,s}} :={}&  \sum_{\alpha \in (\Z_{\ge 0})^d,\,|\alpha|\le k} \norm{\Jbr{x}^s \partial^\alpha f}_{L^2(\R^d)}
\end{align*}
\end{definition}

\begin{assumption}\label{A:2D}
Let $A_1, B_1\in H^{2,2}(\R^2)$. Suppose that there exists a constant $\rho_0\ge1$ such that the identity
\begin{equation}\label{E:a2D}
	\tfrac1\rho_0 |A_1(\xi)| \le |B_1(\xi)| \le \rho_0 |A_1(\xi)|
\end{equation}
holds true for all $\xi \in \R^2$.
Suppose further that 
\begin{equation}\label{E:zetadef}
\zeta(\xi) :=
\left\{
\begin{aligned} 
&\tfrac{|B_1(\xi)|}{|A_1(\xi)|} &&(A_1(\xi) \neq 0), \\
&\lim_{z \to \xi;\; A_1(z)\neq0} \tfrac{|B_1(z)|}{|A_1(z)|} &&(A_1(\xi) = 0)
\end{aligned}
\right.
\end{equation}
is well-defined for $\xi \in \supp A_1$.
In addition, suppose that the above $\zeta(\xi)$ can be extended to a function on $\R^2$ so that 
it satisfies
$\partial_{z_j} \zeta \in L^4(\R^2) + L^\infty(\R^2)$ and
$\partial_{z_j}\partial_{z_k}\zeta \in L^2(\R^2) + L^\infty(\R^2)$ for $j,k = 1,2$,
and the inequality
\[
	\rho_0^{-1} \le \inf_{\xi \in \R^2} \zeta(\xi) \le \sup_{\xi \in \R^2} \zeta(\xi) \le \rho_0
\]
holds for the same constant $\rho_0\ge1$ as in \eqref{E:a2D}.
We abbreviate $\zeta(\xi)=|B_1(\xi)|/|A_1(\xi)|$.
\end{assumption}

\begin{remark}
It is important to note that if $\phi_0$ and $\phi_1$ are real-valued then
we can choose $\zeta\equiv1$ since $B_1(\xi)=\overline{A_1(\xi)}$ holds.
An important characterization of the complex-valued case is that $\zeta(\xi)$ may vary.
However, $\zeta\equiv1 $ does not necessarily imply $\phi_0$ and $\phi_1$ are real.
An example is $\phi_0(x) = e^{-x^2/2}$, $\phi_1(x) = -i x e^{-x^2/2}$.
\end{remark}

\begin{remark}
Under Assumption \ref{A:2D}, it is  allowed that $\zeta(\xi)$, the ratio of $|B_1(\xi)|$
to $|A_1(\xi)|$, rapidly changes in such a sense that derivative of $\zeta(\xi)$ is not bounded.
One such example is
$\zeta(\xi) = 1 + |\xi|\big|\log |\xi|\big|^{\frac{1}{4}}$ around $\xi = 0$. 
\end{remark}

We choose the asymptotic behavior $u_{\mathrm{ap}}$ as follows. For $t\ge 1$, 
\begin{equation}\label{E:uap2D}
	u_{\mathrm{ap}}(t,x) = t^{-1} {\bf 1}_{\{|x|<t\}}(t,x) [A_1(\mu) e^{i\theta + i S_A(\mu) \log t} + B_1(\mu) e^{-i\theta + i S_B(\mu) \log t}]
\end{equation}
where $\theta$ and $\mu$ are as in \eqref{E:thetamu} and
the phase modulations are
\begin{equation}\label{E:SASB2D}
	\begin{aligned}
	S_A(z) &= -\tfrac{\l}2 \Jbr{z}^{-1} L_0(\zeta(z))|A_1(z)|, \\
	S_B(z) &= \tfrac{\l}2 \Jbr{z}^{-1} L_0(1/\zeta(z))|B_1(z)|.
	\end{aligned}
\end{equation}
Here, $\zeta(z)$ is the ratio function given in Assumption \ref{A:2D} and
$L_0$ is given by
\begin{equation}\label{E:L0EKform}\textstyle
	L_0(\zeta) 
=\frac{(1+\zeta)(7+\zeta^2)}{3\pi} E(\frac{2\sqrt{\zeta}}{1+\zeta} )
	- \frac{(1+\zeta)(1-\zeta)^2}{3\pi} K(\frac{2\sqrt{\zeta}}{1+\zeta}),
\end{equation}
where $K(k)$ is the complete elliptic integral of the first kind defined by
\[\textstyle
	K(k) = \int_0^{\pi/2} (1-k^2 \sin^2 \theta)^{-\frac12} d\theta,
\]
and $E(k)$ is the complete elliptic integral of the second kind defined by
\[\textstyle
	E(k) = \int_0^{\pi/2} (1-k^2 \sin^2 \theta)^{\frac12} d\theta.
\]

\begin{theorem}\label{T:main}
Let $A_1,B_1$ be functions satisfying Assumption \ref{A:2D}.
Define the profile $u_{\mathrm{ap}}(t,x)$ as in \eqref{E:uap2D}.
Then, for any $\delta_0\in(0,1)$ and for the constant $\rho_0\ge1$ given in Assumption \ref{A:2D},
there exists $\eps_0=\eps_0(\delta_0,\rho_0)>0$ such that if
\[
	\norm{A_1}_{L^\I(\R^2)}+\norm{B_1}_{L^\I(\R^2)}\le \eps_0
\]
then there exist $T\ge1$ and a unique solution $u
\in C([T,\I);H^{1/2})$ to \eqref{E:NLKG2} such that
\[
	\norm{u(t)-u_{\mathrm{ap}}(t)}_{L^2(\R^2)} \lesssim t^{-\delta_0}
\]
for $t\ge T$. Furthermore, if the set $\{z\in\R^2 \ |\ |A_1(z)|\neq |B_1(z)|\}$ has positive measure,
then corresponding solution $u(t,x)$ is complex-valued. 
\end{theorem}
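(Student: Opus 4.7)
The strategy is a backward-in-time contraction.  I set $u = u_{\mathrm{ap}}+w$ (after first smoothing the light-cone indicator in \eqref{E:uap2D} on a scale $t^{1-\si}$ for some $\si\in(\delta_0,1)$, so that the truncated profile lies in $C([T,\I);H^{1/2})$ without spoiling $t^{-\delta_0}$-accuracy) and reduce the final-value problem to the fixed-point equation
\begin{equation*}
w(t)=\int_t^{\I}\frac{\sin((s-t)\Jbr{D})}{\Jbr{D}}\bigl\{\l\bigl(|u_{\mathrm{ap}}+w|(u_{\mathrm{ap}}+w)-|u_{\mathrm{ap}}|u_{\mathrm{ap}}\bigr)-\mathcal{E}(s)\bigr\}\,ds,
\end{equation*}
with source $\mathcal{E} := (\square_{t,x}+1)u_{\mathrm{ap}}-\l|u_{\mathrm{ap}}|u_{\mathrm{ap}}$.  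I would seek $w$ in a small ball of
\begin{equation*}
X_T=\{w\in C([T,\I);H^{1/2}(\R^2)):\; \sup_{t\ge T}(t^{\delta_0}\norm{w(t)}_{L^2}+\norm{w(t)}_{H^{1/2}})<\I\}
\end{equation*}
for $T=T(\delta_0,\rho_0)$ large.

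The heart of the proof is the decay estimate $\norm{\mathcal{E}(t)}_{H^{-1/2}}\lesssim t^{-1-\delta}$ for some $\delta>\delta_0$.  Inside the light cone I pass to hyperbolic coordinates $(\tau,\mu)$ with $\tau=\sqrt{t^2-|x|^2}$ and $x=\tau\mu$, so that $\theta=-\tau$, $t=\tau\Jbr{\mu}$ and $u_{\mathrm{ap}}=\tau^{-1}\Jbr{\mu}^{-1}[A_1(\mu)\tau^{iS_A}\Jbr{\mu}^{iS_A}e^{-i\tau}+B_1(\mu)\tau^{iS_B}\Jbr{\mu}^{iS_B}e^{i\tau}]$.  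Applying $\square+1$ in these variables produces a harmless stationary-phase residue of size $\tau^{-3}$ plus a resonant $O(\tau^{-2})$ contribution from the $\tau$-derivatives of $\tau^{iS_{A,B}}$, which must cancel the resonant part of $\l|u_{\mathrm{ap}}|u_{\mathrm{ap}}$.  Since $|u_{\mathrm{ap}}|^2 = t^{-2}\{|A_1|^2+|B_1|^2+2|A_1||B_1|\cos\psi\}$ with $\psi = 2\theta + \arg(A_1\overline{B_1}) + (S_A-S_B)\log t$, the modulus $|u_{\mathrm{ap}}|$ is a $2\pi$-periodic function of the fast phase $\psi$ and I expand it in Fourier series.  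Only the modes $n=0,\pm1$ beat phase-locked against $e^{\pm i\theta}$ in the product $|u_{\mathrm{ap}}|u_{\mathrm{ap}}$, and summing their contributions reduces, via classical moment formulas for $\sqrt{1+\zeta^2+2\zeta\cos\psi}$, to the complete elliptic integrals $K,E$ in \eqref{E:L0EKform}.  The prescription \eqref{E:SASB2D} is exactly what cancels the resulting $\tau^{-2}$-resonance, and the non-resonant remainder gains an extra factor $\tau^{-\nu}$ after one integration by parts in $\tau$.

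With $\mathcal{E}$ controlled, the contraction closes by combining the inhomogeneous fractional energy estimate $\norm{w(t)}_{H^{1/2}}\lesssim \int_t^{\I}\norm{F(s)}_{H^{-1/2}}\,ds$ with the pointwise bound $\norm{u_{\mathrm{ap}}(t)}_{L^\I}\lesssim \eps_0/t$ and the Lipschitz estimate
\begin{equation*}
\norm{|u+w_1|(u+w_1)-|u+w_2|(u+w_2)}_{L^2}\lesssim (\norm{u}_{L^\I}+\norm{w_1}_{L^\I}+\norm{w_2}_{L^\I})\norm{w_1-w_2}_{L^2}.
\end{equation*}
The main obstacle, in my view, is controlling the $H^{1/2}$-regularity of the non-polynomial map $z\mapsto |z|z$ (which is only $C^1$); I would address this through Kato--Ponce/fractional chain-rule estimates, together with the observation that the $L^p+L^\I$-regularity of $\zeta$ postulated in Assumption \ref{A:2D} propagates through $L_0(\zeta)$ to give $S_A,S_B\in H^2(\R^2)$, precisely the amount of smoothness required for the $H^{-1/2}$-level resonance cancellation.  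Finally, for the complex-valuedness claim, if $u\equiv \bar u$ then $\norm{u_{\mathrm{ap}}-\overline{u_{\mathrm{ap}}}}_{L^2}\to 0$; isolating the $e^{\pm i\theta}$-components and passing to $(\tau,\mu)$-coordinates yields $\liminf_{t\to\I}\norm{u_{\mathrm{ap}}(t)-\overline{u_{\mathrm{ap}}(t)}}_{L^2}^2 > 0$ whenever $\{|A_1|\ne|B_1|\}$ has positive measure, giving the complex-valuedness conclusion by contradiction.
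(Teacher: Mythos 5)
Your outline of the backward‑in‑time contraction and of the Fourier‑series decomposition of $|u_{\mathrm{ap}}|u_{\mathrm{ap}}$ in the fast phase $\psi$ is the right overall strategy, and the identification of $S_A,S_B$ from the elliptic‑integral coefficient $L_0$ matches the paper. However, there is a genuine gap at the very point you call the heart of the proof, namely the claimed source estimate $\norm{\mathcal{E}(t)}_{H^{-1/2}}\lesssim t^{-1-\delta}$ for $\mathcal{E}=(\square+1)u_{\mathrm{ap}}-\l|u_{\mathrm{ap}}|u_{\mathrm{ap}}$ with $\delta>\delta_0$. The phase corrections $S_A,S_B$ only cancel the resonant $n=0,-1$ Fourier modes of the nonlinearity; what survives, the non‑resonant part $N_{\mathrm{nr}}(u_{\mathrm{ap}})$ displayed in \eqref{E:resnonres2d}, is a sum of terms $t^{-2}{\bf 1}_{\{|x|<t\}}g_n(\mu)e^{i(2n+1)\theta}$ with $|2n+1|\ge3$, whose $L^2_x$ norm is $O(t^{-1})$ and whose spatial frequency near the stationary set is $(2n+1)\mu$, i.e.\ of order one for $\mu$ in any fixed compact set. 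Passing from $L^2$ to $H^{-1/2}$ therefore gives no decay gain, and the paper explicitly records (Section \ref{S:1d}, and the analogous 2d computation) that $\norm{(\square+1)u_{\mathrm{ap}}-N(u_{\mathrm{ap}})}_{L^2_x}$ is genuinely of size $t^{-1}$. Your remark that ``one integration by parts in $\tau$'' gains a power cannot fix this as stated: $\tau$ is the hyperbolic time coordinate, not the Duhamel integration variable, and the bound you wrote is a pointwise‑in‑$t$ estimate on $\mathcal{E}$, made \emph{before} any such integration is available.

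The missing idea is a second‑order correction to the profile. The paper adds $v_{\mathrm{ap}}$ as in \eqref{E:vap2d}, a series in $n$ with coefficients $A_n,B_n$ proportional to $L_{n-1}(\zeta)/(1-(2n-1)^2)$, so that $(\square+1)v_{\mathrm{ap}}$ reproduces $N_{\mathrm{nr}}(u_{\mathrm{ap}})$ to leading order; the composite $\widetilde u=u_{\mathrm{ap}}+v_{\mathrm{ap}}$ then satisfies $\norm{(\square+1)\widetilde u-N(\widetilde u)}_{L^2}\lesssim t^{-2}(\log t)^2$ (Proposition \ref{P:ua2D}), which is what feeds into the solvability black box Proposition \ref{prop:FVP2d} from \cite{MS2}. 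The denominator $1-(2n-1)^2$ is precisely what your intended integration by parts would produce, but carrying this out honestly means reformulating your estimate as one on the Duhamel image of $\mathcal{E}$ (or, equivalently, performing a normal form), and establishing summability of the resulting series in $n$, which is the content of Proposition \ref{P:Lnest}. A further advantage of the paper's reduction is that only $L^2$ control of the profile equation error is needed, so the $H^{1/2}$‑regularity difficulties of $z\mapsto|z|z$ that worried you never arise at the profile level; they are absorbed into the fixed‑point argument of \cite{MS2}. Your final paragraph on complex‑valuedness is essentially the paper's argument, modulo one additional point you would need to check: that $|A_1|\neq|B_1|$ implies $S_A+S_B\neq0$, which the paper establishes via the elliptic‑integral identity for $L_0(\zeta)-\zeta L_0(1/\zeta)$.
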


\begin{remark}
It follows from $E(1)=1$ that $L_0(1)=\frac{16}{3\pi}$. Hence, if $\phi_0$ and $\phi_1$ are real-valued, then $\zeta(z)\equiv1$ and
so $S_A(z)$ and $S_B(z)$ coincide with those in our previous study \cite{MS2} and satisfy $S_{A}+S_B \equiv0$.
Moreover, since $|L_0(\zeta)| \lesssim \Jbr{\zeta}$ (see Appendix \ref{S:L0}), we have
\[
	|S_A(z)| + |S_B(z)| \le C|\l| \Jbr{z}^{-1}(|A_1(z)|+|B_1(z)|)
\]
under Assumption \ref{A:2D}. Furthermore, we have $L_0(\zeta)=1+o(1)$ as $\zeta\downarrow0$ and $L_0(\zeta)=\frac32 \zeta + O(\zeta^{-1})$ as 
$\zeta\to\I$ (see Appendix \ref{S:L0}). These imply
\begin{align*}
	S_A(z) \sim{}& -\tfrac{\l}2 \Jbr{z}^{-1} |A_1(z)|, & 	S_B(z) \sim{}& \tfrac{3\l}4 \Jbr{z}^{-1} |A_1(z)|, 
\end{align*}
if $|B_1(z)| \ll |A_1(z)|$ and
\begin{align*}
	S_A(z) \sim{}& -\tfrac{3\l}4 \Jbr{z}^{-1} |B_1(z)|, & 	S_B(z) \sim{}& \tfrac{\l}2 \Jbr{z}^{-1} |B_1(z)|, 
\end{align*}
if $|A_1(z)| \ll |B_1(z)|$.
These formulas can be compared with those in one dimensional case (see \eqref{E:SASB1D}, below).
\end{remark}

\begin{remark}
We can slightly generalize the Assumption \ref{A:2D}. See Remark \ref{R:genAssumption} for the detail.
\end{remark}

\subsection{One dimensional cubic case}

Our method is applicable to the one dimensional cubic case
\begin{equation}\label{E:NLKG1}
(\square_{t,x} + 1)u = \l |u|^2u,\quad (t,x) \in \R^{1+1}.
\end{equation}
In order to clarify the difficulty in two dimensional by comparison, we also consider the one dimensional case.
The choice of the asymptotic behavior is simpler.
However, as far as the authors know, this is the first result which reveals the exact form of the asymptotic behavior.
The real-valued case is studied in \cite{HN09JMP} (see \cite{HN08ZAMP} for the initial value problem).

We choose the asymptotic behavior $u_{\mathrm{ap}}$ as follows. For $t\ge 1$,
\begin{equation}\label{E:uap1D}
	u_{\mathrm{ap}}(t,x) = t^{-\frac12} {\bf 1}_{\{|x|<t\}}(t,x) [A_1(\mu) e^{i\theta + i S_A(\mu) \log t} + B_1(\mu) e^{-i\theta + i S_B(\mu) \log t}]
\end{equation}
where $\theta$ and $\mu$ are given in \eqref{E:thetamu}, the phase modulations are given by
\begin{equation}\label{E:SASB1D}
\begin{aligned}
	S_A(z) &{}= -\tfrac{\l}2 \Jbr{z}^{-1} (|A_1(z)|^2+2|B_1(z)|^2),\\
	S_B(z) &{}= \tfrac{\l}2 \Jbr{z}^{-1} (2|A_1(z)|^2+|B_1(z)|^2).
\end{aligned}
\end{equation}
Our second main theorem is as follows.
\begin{theorem}
Let $\phi_0,\phi_1 \in L^2(\R)$ be complex-valued functions satisfying $A_1, B_1\in H^{2,5/2}(\R)$.
Then, there exists $\eps_0>0$ such that if
\[
	\norm{A_1}_{L^\I(\R)}+\norm{B_1}_{L^\I(\R)}\le \eps_0
\]
then there exist $T\ge1$ and a unique solution $u(t)
\in C([T,\I);L^{2})$ to \eqref{E:NLKG1} such that
\[
	\norm{u(t)-u_{\mathrm{ap}}(t)}_{L^2} \lesssim t^{-\frac5{12}}
\]
for $t\ge T$. Furthermore, if the set $\{z \in \R \ |\ |A_1(z)|\neq |B_1(z)| \}$ has positive measure then
the corresponding solution $u(t,x)$ is complex-valued.
\end{theorem}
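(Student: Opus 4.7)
The plan is to insert the ansatz \eqref{E:uap1D} into \eqref{E:NLKG1}, verify that the logarithmic phase corrections \eqref{E:SASB1D} exactly cancel the resonant part of the cubic nonlinearity, and then solve for the remainder $w:=u-u_{\mathrm{ap}}$ by contraction backwards from $t=\infty$.

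First, I would pass to hyperbolic coordinates $\tau=\sqrt{t^2-x^2}$, $y=\operatorname{arctanh}(x/t)$, so that $\mu=\sinh y$, $\theta=-\tau$, $t=\tau\Jbr{\mu}$, and $\square=\partial_\tau^2+\tau^{-1}\partial_\tau-\tau^{-2}\partial_y^2$. A direct calculation gives
\[
(\square+1)[\tau^{-1/2}ge^{\mp i\tau}] = \tau^{-1/2}e^{\mp i\tau}\bigl[g_{\tau\tau}\mp2ig_\tau+\tfrac14\tau^{-2}g-\tau^{-2}g_{yy}\bigr].
\]
Applied to the two branches of $u_{\mathrm{ap}}=t^{-1/2}[Ae^{i\theta}+Be^{-i\theta}]$ with $A:=A_1(\mu)e^{iS_A\log t}$, $B:=B_1(\mu)e^{iS_B\log t}$, the leading $\tau^{-3/2}e^{\pm i\tau}$ terms come from $\mp 2ig_\tau$ and carry factors $S_{A,B}/\tau$. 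Expanding
\begin{align*}
|u_{\mathrm{ap}}|^2u_{\mathrm{ap}}=t^{-3/2}\bigl[&(|A_1|^2+2|B_1|^2)Ae^{i\theta}+(2|A_1|^2+|B_1|^2)Be^{-i\theta}\\
&{}+A^2\bar Be^{3i\theta}+\bar AB^2e^{-3i\theta}\bigr]
\end{align*}
and matching the resonant $e^{\pm i\theta}$ coefficients via $t^{-3/2}=\tau^{-3/2}\Jbr{\mu}^{-3/2}$ forces precisely \eqref{E:SASB1D}. The remaining error $E:=(\square+1)u_{\mathrm{ap}}-\lambda|u_{\mathrm{ap}}|^2u_{\mathrm{ap}}$ consists of the non-resonant cubic $e^{\pm3i\theta}$ pieces (of raw size $O(t^{-1})$ in $L^2_x$ but gaining an extra $\tau^{-1}$ after one integration by parts in $\tau$ inside Duhamel) together with the $\tau^{-5/2}$-subleading pieces from $g_{\tau\tau}$, $\tau^{-2}g$ and $\tau^{-2}g_{yy}$. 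Since $\partial_y^2$ brings in up to two $\mu$-derivatives of $A_1,S_A$ along with $(\log\tau)^2$ factors, the hypothesis $A_1,B_1\in H^{2,5/2}(\R)$ is exactly what the change of variables $dx=t\Jbr{\mu}^{-3}d\mu$ demands in order to make these contributions $L^2_\mu$-integrable.

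Writing $u=u_{\mathrm{ap}}+w$ so that $(\square+1)w=\lambda N(u_{\mathrm{ap}},w)-E$ with $N(u_{\mathrm{ap}},w):=|u_{\mathrm{ap}}+w|^2(u_{\mathrm{ap}}+w)-|u_{\mathrm{ap}}|^2u_{\mathrm{ap}}$ and $\|w(t)\|_{L^2}\to0$, I would run a fixed point on
\[
w(t)=-\int_t^\infty\frac{\sin((s-t)\Jbr{\nabla})}{\Jbr{\nabla}}\bigl[\lambda N(u_{\mathrm{ap}}(s),w(s))-E(s)\bigr]\,ds
\]
in $X_T:=\{w\in C([T,\infty);L^2(\R)):\sup_{t\ge T}t^{5/12}\|w(t)\|_{L^2}\le1\}$ for $T$ large. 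The essential bound is $\||u_{\mathrm{ap}}|^2w\|_{L^2}\lesssim\|u_{\mathrm{ap}}\|_{L^\infty}^2\|w\|_{L^2}\lesssim t^{-1-5/12}$, whose time-integral from $t$ to $\infty$ is $\tfrac{12}{5}t^{-5/12}$; smallness of $\eps_0$ brings the prefactor below $5/12$ and closes the contraction. The higher-order contributions $u_{\mathrm{ap}}w^2$ and $w^3$ are controlled by 1D Sobolev embedding upon propagating extra regularity of $w$, and uniqueness in $X_T$ follows from the same argument. The final complex-valuedness assertion is then immediate: when $\{|A_1|\neq|B_1|\}$ has positive measure, the leading profile $u_{\mathrm{ap}}$ cannot be real, while $w$ decays strictly faster.

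The main obstacle is the bookkeeping for $E$: one must simultaneously extract the $(\log\tau)^2$ factors arising from $\tau^{-2}g_{yy}$ (invoking the $H^2$ part of the hypothesis and the weight $\Jbr{x}^{5/2}$) and perform the single integration by parts in $\tau$ for the non-resonant $e^{\pm3i\theta}$ cubic terms. Compared with Theorem~\ref{T:main}, the present one-dimensional polynomial setting is considerably simpler: $|u|^2u$ is an exact finite trigonometric polynomial in $\theta$, so no complete elliptic integrals $K,E$ ever enter and no ratio function $\zeta$ needs to be defined -- this is why no analogue of the upper/lower bound \eqref{E:a2D} appears in the hypothesis.
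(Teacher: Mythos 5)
Your overall reading of the structure is right—hyperbolic coordinates, the resonant/non-resonant split of $N(u_{\mathrm{ap}})$, matching the $e^{\pm i\theta}$ coefficients to determine $S_A,S_B$, and a backward contraction from $t=\infty$—but two concrete steps in your plan do not close as written.

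First, your fixed-point space $X_T=\{w:\sup_t t^{5/12}\|w(t)\|_{L^2}\le 1\}$ cannot carry the nonlinear terms. You only estimate $|u_{\mathrm{ap}}|^2w$ and wave away $u_{\mathrm{ap}}w^2$ and $w^3$ by ``propagating extra regularity,'' but $\|w^3\|_{L^2}\lesssim\|w\|_{L^2}^3$ is false in 1D. The paper's Proposition~\ref{prop:FVP1d} precisely supplies the missing ingredient: a Strichartz-type mixed norm $L^{\frac{8}{3-4\delta_0}}_t L^{\frac{4}{4\delta_0-1}}_x$ (together with a low regularity gain $\langle\partial_x\rangle^{\frac54-3\delta_0}$) in which the cubic terms \emph{do} close, and this is also what imposes the constraint $\delta_0\le 5/12$; the rate $t^{-5/12}$ is not decided by the size of the error $E$ but by the range of admissible exponents in that lemma. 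An $L^2$-only contraction is simply not enough.

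Second, your treatment of the non-resonant $e^{\pm3i\theta}$ part of $E$ is the real gap. It has raw size $O(t^{-1})$ in $L^2$, and you assert it ``gains an extra $\tau^{-1}$ after one integration by parts in $\tau$ inside Duhamel,'' but you never carry this out. The obstacle is that $e^{\mp3i\tau}$ with $\tau=\sqrt{s^2-|x|^2}$ does not commute with the propagator $\Jbr{D}^{-1}\sin((s-t)\Jbr{D})$, so a naive $\partial_s$-integration by parts on $\int_t^\infty \Jbr{D}^{-1}\sin((s-t)\Jbr{D})E(s)\,ds$ produces no gain from the $e^{\mp3i\tau}$ factor alone. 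What actually makes the non-resonance useful is the algebraic fact recorded in Lemma~\ref{L:hc1d}: $(\square+1)$ applied to a $\tau^{-3/2}e^{\mp3i\tau}H$ profile returns $-8$ times the same profile to leading order (the factor $1-n^2$ with $n=\mp3$). This is why the paper introduces the explicit corrector $v_{\mathrm{ap}}$ of \eqref{E:1dvp} with the prefactor $-\tfrac{\lambda}{8}$ in \eqref{pn}: it converts the $O(t^{-1})$ non-resonant source into a genuinely $O(t^{-2})$ error (Propositions \ref{P:uapNr1D} and \ref{P:vn1d}), so that the profile $\widetilde{u}=u_{\mathrm{ap}}+v_{\mathrm{ap}}$ satisfies hypothesis \eqref{eq:A4}. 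Any rigorous version of your ``integration by parts inside Duhamel'' ends up constructing $v_{\mathrm{ap}}$ anyway; as it stands, the claim is unjustified.

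Finally, your closing sentence about complex-valuedness is too quick. ``$u_{\mathrm{ap}}$ cannot be real'' pointwise does not give what is needed, namely $\liminf_{t\to\infty}\|\Im u_{\mathrm{ap}}(t)\|_{L^2}>0$, which is the hypothesis \eqref{E:condc1d}. The paper proves this by identifying $\{|A_1|\neq|B_1|\}$ with $\{S_A+S_B\neq0\}$, extracting a compact $\Omega$ on which $\partial_\xi(S_A+S_B)$ is bounded away from zero, and then killing the cross terms $\int_\Omega A_1B_1 e^{i(S_A+S_B)\log t}dz$ by non-stationary phase; without that argument the imaginary part could a priori decay to zero in $L^2$.
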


\begin{remark}
Candy and Lindblad \cite{CL}
obtained the large time asymptotics for the $1$d cubic nonlinear Dirac equation (Thirring model) 
which can be reduced to the system of the complex valued Klein-Gordon equations 
with derivative interactions. Here, the asymptotic behavior of the solution is given by the 
solution to the linear Dirac equation with similar logarithmic phase corrections to
that of \eqref{E:uap1D}.
\end{remark}

The rest of the paper is organized as follows.
In Section \ref{S:observation}, we make an observation on the choice of the asymptotic behavior.
We give a sufficient condition for the solvability of the final value problems in terms of the asymptotic profile in Section \ref{S:FVP}.
Then, main theorems are proven in Sections \ref{S:1d} and \ref{S:2d}.

\section{Asymptotic profile with complex final states}\label{S:observation}

In this section, we consider how to choose the phase correction term of the asymptotic behavior.
In particular, let us see the difference between the real case and the complex case.

\subsection{Asymptotics for linear equation}
Let us begin with the linear Klein-Gordon equation
\[
	(\square_{t,x} + 1) u=0, \quad (t,x)\in \R^{1+d}.
\]
To obtain asymptotic behavior, we introduce the new unknowns $\phi_\pm$ by
\[\textstyle
	\phi_\pm(t) = \frac12 (u \pm i\Jbr{\nabla}^{-1} u_t).
\]
Then, $\phi_\pm$ solve the half Klein-Gordon equation
\[
	\partial_t \phi_{\pm} \pm i\Jbr{\nabla} \phi_\pm =0,
\]
respectively. 
We apply the stationary phase method to an oscillatory integral representation of $\phi_\pm (t)= e^{\mp i\Jbr{\nabla} t}\phi_\pm(0)$ for $t>0$.
Then, one finds that there is no stationary point if $|x|\ge t$,
and one stationary point $\xi=\pm \mu$, respectively, if $|x|< t$, where $\mu$ is given in \eqref{E:thetamu}.
Hence, we have the asymptotic formula
\begin{align*}
	(e^{\mp i\Jbr{\nabla}t} \phi_{\pm,0})(x)
	\sim {}& {\bf 1}_{\{|x|<t\}}(t,x) t^{-\frac{d}2} e^{\mp i\frac{d\pi}4}\Jbr{\mu}^{\frac{d+2}2} e^{\mp i \Jbr{\mu}^{-1} t} \widehat{\phi_{\pm,0}}(\pm \mu)
\end{align*}
as $t\to\I$.
To simplify the formula, we introduce
\begin{equation}\label{E:defA1}
	A_1(\mu):= e^{- i\frac{d\pi}4}\Jbr{\mu}^{\frac{d+2}2}
	\widehat{\phi_{+,0}}(\mu) = \tfrac{e^{- i\frac{d\pi}4}}2 \Jbr{\mu}^{\frac{d}2}(\Jbr{\mu}\widehat{\phi_0}(\mu)+i\widehat{\phi_1}(\mu) ),
\end{equation}
\begin{equation}\label{E:defB1}
	B_1(\mu):= e^{ i\frac{d\pi}4}\Jbr{\mu}^{\frac{d+2}2}
	\widehat{\phi_{-,0}}(-\mu) = \tfrac{e^{ i\frac{d\pi}4}}2 \Jbr{\mu}^{\frac{d}2}(\Jbr{\mu}\widehat{\phi_0}(-\mu)-i\widehat{\phi_1}(-\mu) ),
\end{equation}
and $\theta = - \Jbr{\mu}^{-1} t$ is the same one as in \eqref{E:thetamu}.
Thus, we reach to the asymptotic behavior of the linear solution
\[
	u_{\mathrm{lin}}(t,x) =  t^{-\frac{d}2} {\bf 1}_{\{|x|<t\}}(t,x)(A_1(\mu)e^{i\theta}+B_1(\mu)e^{-i\theta}).
\]

\subsection{$1d$ cubic case}
Let us move to the nonlinear equation.
We begin with the one dimensional cubic case.
Put the asymptotic behavior $u_{\mathrm{lin}}$ of the linear solution into the nonlinearity
$N(u)= \l |u|^2 u$. Then,
\[
	N( u_{\mathrm{lin}} )
	= t^{-\frac32} {\bf 1}_{\{|x|<t\}}(t,x) N(A_1 e^{i\theta} + B_1 e^{-i\theta}).
\]	
Recall that (the asymptotic behavior of) the linear solution has the term oscillates like $e^{\pm i\theta}$. 
The key point is to find a resonant part which has the same oscillation as the linear solution.
In this case, since $N(u)$ is a polynomial,
one can do so by expanding the polynomial.
Here, we use another argument which is applicable to the two dimensional case.
We use the fact that $N$ is gauge invariant and homogeneous of degree 3 to obtain
\begin{align*}
	N(A_1 e^{i\theta} + B_1 e^{-i\theta})
	&{}= e^{i\theta + i \arg A_1} N\(|A_1| +|B_1|e^{-i\alpha}e^{-2i\theta}\) \\
	&{}= |A_1|^2 A_1 e^{i\theta} N(1 +\zeta e^{-i(2\theta+\alpha)}),
\end{align*}
where $\zeta= |B_1|/|A_1|$ and $e^{i\alpha} = \frac{A_1 \overline{B_1}}{|A_1||B_1|}$.
We apply Fourier series expansion to $F$,
\[
	N(1 +\zeta e^{-i\Theta}) = \l \sum_{n=-\I}^\I L_n(\zeta) e^{in\Theta}.
\]
Here, by means of the explicit formula of $F$, we have
\[
	L_{-2}(\zeta)= \zeta^2, \quad L_{-1}(\zeta) = (\zeta^3 + 2\zeta), \quad L_0(\zeta) = (1+2\zeta^2),\quad
	L_{1} (\zeta) = \zeta,
\]
and $L_n(\zeta)=0$ for all other $n$.
This is the crucial step in deriving the asymptotic behavior.
We emphasize that these coefficients depends on $\zeta$.
This is the feature of the complex data.
Notice that $\zeta\equiv1$ in the real data case.

Let us derive the asymptotic behavior. Note that we have the relation
\[
	\zeta^{-3} L_{-n}(\zeta) =  L_{n-1} (1/\zeta).
\]
Hence, the part which has the same oscillation as $A_1e^{i\theta}$ is 
\[
	\l |A_1|^2 A_1 e^{i\theta}L_0(\zeta) = \l(|A_1|^2 + 2|B_1|^2) A_1 e^{i\theta}.
\]
and similarly the part which has the same oscillation as $B_1e^{-i\theta}$ is
\[
	\l |A_1|^2 A_1 e^{-i\theta-i\alpha}L_{-1}(\zeta)  = \l |B_1|^2 L_0(1/\zeta)B_1 e^{-i\theta}  =\l (|B_1|^2+2|A_1|^2) B_1 e^{-i\theta}.
\]
These two determine the phase correction.
Indeed, if we set
\[
	u_{\mathrm{ap}}(t,x) =  t^{-\frac{1}2} {\bf 1}_{\{|x|<t\}}(t,x)(A_1(\mu)e^{i\theta+iS_{A}(\mu)\log t}+B_1(\mu)e^{-i\theta + iS_{B}(\mu)\log t}),
\]
then a computation shows that
\begin{align*}
	(\square +1)u_{\mathrm{ap}}(t,x) \sim  t^{-\frac{3}2} {\bf 1}_{\{|x|<t\}}(t,x)& \Big((-2 \Jbr{\mu} S_A) A_1(\mu)e^{i\theta+iS_{A}(\mu)\log t}
	\\ & +(2 \Jbr{\mu} S_B)B_1(\mu)e^{-i\theta + iS_{B}(\mu)\log t}\Big).
\end{align*}
Comparing the top terms, one sees that the phase corrections are chosen as
\begin{align*}
	S_{A}(\mu) &{}:=  -\tfrac{\l}2 \Jbr{\mu}^{-1} 
	(|A_1|^2+2|B_1|^2),\\
	S_{B}(\mu) &{}:=  \tfrac{\l}2 \Jbr{\mu}^{-1} 
	(2|A_1|^2+|B_1|^2).
\end{align*}
Remark that in the real data case, we have $|A_1|\equiv|B_1|$ and so $S_{A}=-S_{B}$.

\subsection{$2d$ quadratic case}
Let us next consider the two dimensional case.
Since the nonlinearity $N(u)=\l |u| u$ is not a polynomial, the resonant part is not picked by a simple calculation.
In \cite{MS2}, Fourier series expansion is employed to provide the step.
We here use the technique.
Since $N(u)$ is gauge invariant and homogeneous of degree two, 
\begin{align*}
	N(A_1 e^{i\theta} + B_1 e^{-i\theta})&{}= e^{i\theta + i \arg A_1} N\(|A_1| +|B_1|e^{-i\alpha}e^{-2i\theta}\)\\
	&{} = |A_1|A_1 e^{i\theta} N(1 +\zeta e^{-i(2\theta+\alpha)}).
\end{align*}
We apply Fourier series expansion;
\begin{equation}\label{E:Fourier2D}
	N(1 +\zeta e^{-i\Theta})
	= \l \sum_{n=-\I}^{\I} L_n(\zeta) e^{in\Theta},
\end{equation}
where
\begin{equation}\label{E:Lndef}
	L_n(\zeta) = \frac1{2\pi} \int_{0}^{2\pi} |1+\zeta e^{-i\Theta}|(1 +\zeta e^{-i\Theta}) e^{-in\Theta}d\Theta.
\end{equation}
Due to the fact that $|1+\zeta e^{-i\Theta}|$ is smooth if $\zeta \neq 1$ but is merely continuous if $\zeta=1$,
the property of the coefficient depends on $\zeta$.
For example, $L_n=O(|n|^{-3})$ if $\zeta=1$, but $L_n=O(|n|^{-\I})$ if $\zeta\neq1$.
We also have to care about the regularity of $L_n(\zeta)$ in $\zeta$. This shows a sharp contrast 
with the real data in which case we have $\zeta\equiv1$.

One sees from \eqref{E:Lndef} that
$L_n(\zeta)$ is a real number and that the relation
\begin{equation}\label{E:reflection2D}
	\zeta^{-2} L_{-n}(\zeta) 
	=   L_{n-1}(1/\zeta)
\end{equation}
holds for any $\zeta>0$ and $n\in\Z$.
Hence, the part which has the same oscillation as $A_1e^{i\theta}$ is 
\[
	\l |A_1|A_1 e^{i\theta} L_0(\zeta) = \l |A_1| L_0(\zeta) A_1 e^{i\theta}.
\]
and the part which has the same oscillation as $B_1e^{-i\theta}$ is
\[
	\l|A_1|A_1 e^{i\theta} L_{-1}(\zeta)e^{-i(2\theta+\alpha)}
	=\l (\zeta^{-2} L_{-1}(\zeta) )|B_1|B_1  e^{-i\theta}
	=\l L_0(1/\zeta)|B_1|  B_1e^{-i\theta}.
\]
Thus, we obtain the asymptotic profile
\[
	u_{\mathrm{ap}}(t,x) =  t^{-1} {\bf 1}_{\{|x|<t\}}(t,x)(A_1(\mu)e^{i\theta+iS_{A_1}(\mu)\log t}+B_1(\mu)e^{-i\theta+iS_{B_1}(\mu)\log t})
\]
with
\begin{align*}
	S_{A}(\mu) &{}:= - \tfrac{\l}2 \Jbr{\mu}^{-1} 
	|A_1| L_0(\zeta),\\
	S_{B}(\mu) &{}:=  \tfrac{\l}2 \Jbr{\mu}^{-1} 
	|B_1| L_0(1/\zeta).
\end{align*}

\begin{remark}
In two dimensional case, the coefficient $L_0(\zeta)$ given in \eqref{E:Lndef} is described
by the elliptic integrals as in \eqref{E:L0EKform}.
We give a proof of \eqref{E:L0EKform} in Appendix \ref{S:L0}.
\end{remark}

\begin{remark}
The technique of the decomposition of the nonlinearity is first introduced in \cite{MS1}
for a study of a singular limit problem in generalized Korteweg-de Vries equation.
It is first applied to modified scattering problems for Schr\"odinger equation in \cite{MM} (see also \cite{MMU}).
In \cite{MS2,MS3}, it turns out that the decomposition is useful in the modified scattering problem for nonlinear Klein-Gordon equation.
\end{remark}

One new respect of the complex data case is that we need to care about regularity of $L_n(\zeta)$ in $\zeta$,
and decay rate of its derivatives as $n$ goes to $\pm\infty$.
Note that in the real case $\zeta \equiv 1$ and so no regularity issue is involved and all derivatives are identically zero.
From the formula \eqref{E:L0EKform}, we see that
\[
	L_0(\zeta) \in C^2(\R_+) \cap C^\I (\R_+\setminus \{1 \}). 
\]
Remark that the formula also shows $L_0(\zeta) \not\in C^3(\R_+)$.
We investigate the regularity of $L_0(\zeta)$ in Appendix \ref{S:L0}.

Let us conclude this section with the following estimate on the regularity and the decay of the coefficients $L_n(\zeta)$ for $n\ge1$.
\begin{proposition}\label{P:Lnest}
Let $L_n(\zeta)$ be as in \eqref{E:Lndef}.
Then, 
$L_n (\cdot) \in C^1(\R_+) \cap C^\I (\R_+ \setminus \{1\})$ for $n\ge1$.
Further, it holds for any $\rho_0 > 1$ that 
\begin{equation}\label{E:Lnreg}
\max_{k=0,1,2}\sup_{n \in \mathbb{Z}} \sup_{\zeta \in [\rho_0^{-1},\rho_0]\setminus\{1\}}\Jbr{n}^{3-k}|L^{(k)}_n(\zeta)| \lesssim_{\rho_0} 1.
\end{equation}
\end{proposition}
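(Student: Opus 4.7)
The plan is to regard $L_n(\zeta)$ as the $n$-th Fourier coefficient in $\Theta$ of
\[
    f(\Theta,\zeta) := (1+\zeta e^{-i\Theta})|1+\zeta e^{-i\Theta}|,
\]
and to obtain both the regularity and the decay by differentiating under the integral sign and then integrating by parts in $\Theta$. Set $u := 1+\zeta e^{-i\Theta}$ and $g := |u|^2 = 1+2\zeta\cos\Theta+\zeta^2$. On the compact set $[\rho_0^{-1},\rho_0]\times[0,2\pi]$, $g$ vanishes only at $(\zeta,\Theta)=(1,\pi)$, where $g=(1-\zeta)^2$. Hence $f = g^{1/2}u$ is $C^\I$ on the complement of this single point, and differentiation under the integral yields $L_n\in C^\I(\R_+\setminus\{1\})$ together with $L_n^{(k)}(\zeta)=\tfrac{1}{2\pi}\int_0^{2\pi}\partial_\zeta^k f(\Theta,\zeta)\,e^{-in\Theta}\,d\Theta$ for $\zeta\neq 1$. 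To extend $L_n$ to a $C^1$ function on $\R_+$ I would verify that $\partial_\zeta f = e^{-i\Theta}|u| + u\,\mathrm{Re}(\bar u e^{-i\Theta})/|u|$ is continuous and uniformly bounded on $[\rho_0^{-1},\rho_0]\times[0,2\pi]$: the second summand is dominated by $|u|$ because $|\mathrm{Re}(\bar u e^{-i\Theta})|\le|u|$, hence it extends continuously by zero at $u=0$. A standard dominated convergence argument then shows that $L_n$ is differentiable at $\zeta=1$ with $L_n'$ continuous there.

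For the decay estimates I would integrate by parts in $\Theta$ exactly $3-k$ times (the boundary terms vanishing by $2\pi$-periodicity), which is valid for $\zeta\in[\rho_0^{-1},\rho_0]\setminus\{1\}$ because $f$ is then $C^\I$ in $\Theta$, to obtain
\[
    \Jbr{n}^{3-k}\,|L_n^{(k)}(\zeta)| \lesssim \bigl\|\partial_\Theta^{3-k}\partial_\zeta^{k}f(\cdot,\zeta)\bigr\|_{L^1(0,2\pi)} \qquad(n\neq 0),
\]
with the case $n=0$ handled directly by the trivial $L^\I$ bound on the integrand. The proposition therefore reduces to proving the uniform estimate
\[
    \sup_{\zeta\in[\rho_0^{-1},\rho_0]\setminus\{1\}}\ \max_{j+k\le 3}\bigl\|\partial_\Theta^{j}\partial_\zeta^{k}f(\cdot,\zeta)\bigr\|_{L^1(0,2\pi)} <\I,
\]
after which the symmetry $\zeta^{-2}L_{-n}(\zeta)=L_{n-1}(1/\zeta)$ from \eqref{E:reflection2D} takes care of negative $n$.

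The main obstacle is precisely this uniform $L^1$ bound: a naive Leibniz expansion of derivatives of $f=g^{1/2}u$ produces individual terms with factors $g^{-r+1/2}$ whose $L^\I$ norms blow up like $(1-\zeta)^{-(2r-1)}$ as $\zeta\to 1$, and even some of the integrated $L^1$ norms of these individual terms diverge. The resolution relies on algebraic cancellations that are best captured in local coordinates $\delta:=1-\zeta$ and $\eta:=\Theta-\pi$ near the singularity, where $g = \delta^2+4\zeta\sin^2(\eta/2)\asymp\delta^2+\eta^2$ and $u\approx\delta+i\eta$. The crucial identity is
\[
    \partial_\eta^2|u| = \frac{\delta^2}{|u|^3},
\]
in which the $\delta^2$ prefactor exactly tempers the apparent $|u|^{-3}$ blow-up, so that $\|\partial_\eta^2|u|\|_{L^1(d\eta)}$ is $O(1)$ uniformly in $\delta$. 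Tracking successive derivatives of $f=|u|u$ in the same spirit shows that each term arising in $\partial_\Theta^{j}\partial_\zeta^{k}f$ with $j+k\le 3$ is schematically of the form $\delta^{2m}|\eta|^{p}(\delta^2+\eta^2)^{-q}$ with the balancing relation $2m+p+1=2q$; the rescaling $\eta=\delta s$ then shows that each such integral over $(-\pi,\pi)$ is $O(1)$ uniformly in $\delta$. Away from the singularity all partial derivatives of $f$ are uniformly bounded in $L^\I$, and their $L^1$ contribution is trivially $O(1)$. Combining these pieces gives the required uniform bound and completes the proof.
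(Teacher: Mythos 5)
Your high-level strategy is the same as the paper's: represent $L_n^{(k)}$ by differentiating under the integral, integrate by parts $3-k$ times in $\Theta$, and reduce the claim to the uniform $L^1$ bound
\[
\sup_{\zeta\in[\rho_0^{-1},\rho_0]\setminus\{1\}}\ \max_{j+k\le 3}\bigl\|\partial_\Theta^{j}\partial_\zeta^{k}f(\cdot,\zeta)\bigr\|_{L^1(0,2\pi)} <\I,
\]
handle $n=0$ directly and negative $n$ by the reflection identity. Your treatment of the $C^1$ extension (boundedness and continuity of $\partial_\zeta f$ via $|\mathrm{Re}(\bar u e^{-i\Theta})|\le|u|$, then dominated convergence) is fine. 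So the structure is sound.

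The genuine gap is in the step that is actually the content of the proposition: you never establish the uniform $L^1$ bound, you only describe heuristically why it \emph{ought} to hold. Two specific problems. First, the ``crucial identity'' $\partial_\eta^2|u|=\delta^2/|u|^3$ is exact only for the model $|u|=\sqrt{\delta^2+\eta^2}$; for the true $g=\delta^2+4\zeta\sin^2(\eta/2)$ one instead gets $\partial_\eta^2|u|=(1-\delta)\bigl(-(1-\delta)(1-\cos\eta)^2+\delta^2\cos\eta\bigr)/|u|^3$, and while the extra piece is harmless, this has to be checked rather than assumed, and the same issue recurs (with more bookkeeping) at each of the mixed third-order derivatives. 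Second, and more seriously, the claim that ``each term arising in $\partial_\Theta^j\partial_\zeta^k f$ with $j+k\le 3$ is schematically of the form $\delta^{2m}|\eta|^p(\delta^2+\eta^2)^{-q}$ with $2m+p+1=2q$'' is asserted without proof, and as you yourself observe, naive term-by-term Leibniz expansion does \emph{not} yield such balanced terms — individual pieces diverge and one needs to identify the correct regroupings (e.g.\ in $\partial_\Theta^3 f$ the dangerous piece $\partial_\Theta^3|u|\sim 3\zeta\sin\Theta\bigl(|u|^4-3\zeta\cos\Theta|u|^2-3\zeta^2\sin^2\Theta\bigr)/|u|^5$ must be recognized as $\sim \delta^2\eta/|u|^5$ near the singularity, and only after multiplying by $u$ does it become integrable). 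Moreover the stated balance relation is not even the right sufficient condition: terms like $\eta^2/(\delta^2+\eta^2)$ that naturally appear have $2m+p+1\neq 2q$ but are perfectly fine, while a term literally satisfying $2m+p+1=2q$ with $m=0$ would be log-divergent — the condition you really need is $2m\ge\max(2q-p-1,0)$ with strict inequality at equality. Nailing down exactly which regroupings produce terms satisfying this is precisely where the work lies.

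The paper avoids the local-coordinate heuristics altogether: it computes $\partial_\theta^{3-k}\partial_\zeta^k h$ explicitly for each $k=0,1,2$, exhibits the cancellations by combining specific terms ($I_{0,2}+I_{0,3}$, $I_{0,4}+I_{0,5}$, etc.), and reduces everything to the single global inequality
\[
\frac{1}{\sqrt{1+\zeta^2+2\zeta\cos\theta}}\le\min\Bigl(\frac{1}{|\zeta+\cos\theta|},\frac{1}{|\sin\theta|}\Bigr)
\]
together with $\int_0^\pi \frac{|\zeta+\cos\theta|}{1+\zeta^2+2\zeta\cos\theta}\,d\theta\lesssim_{\rho_0}1$, the latter proved by the substitution $t=\tan(\theta/2)$. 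To close your argument you would have to do something equivalent: either carry out the explicit third-order computation and verify the cancellations, or prove a clean structural statement about mixed third derivatives of $g^{1/2}$ near the nondegenerate zero of $g$ that implies the uniform $L^1$ bound. At present the plan is right but the hard part is missing.
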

Recall that $|L_n(\zeta)|=O(|n|^{-\I})$ for each fixed $\zeta\neq1$.
On the other hand, the estimate $L_n(1)=O(|n|^{-3})$ is sharp.
Namely, there exists $c>0$ such that $|L_n(1)|\ge c\Jbr{n}^{-3}$.
The point of the above estimate is that it is uniform in $\zeta$ around $\zeta =1$.
Moreover, one sees from the regularity property of $L_0(\zeta)$ and the identity \eqref{E:reflection2D} that
the estimate \eqref{E:Lnreg} is sharp in such a sense that the supremum with respect to $\zeta \in [\rho_0^{-1},\rho_0]\setminus\{1\}$ is infinite
at least for $k=3$ and $n=0,-1$.
Proposition \ref{P:Lnest} is proved in Appendix \ref{S:Lnestpf}.

\begin{remark}\label{R:genSASB}
It is clear from the argument in this section that we do not need the support conditions on $A_1$ and $B_1$ to define the phase modifications.
It is because if $A_1(z)=0$ then the value of $S_A(z)$ has no meaning on the asymptotic profile, and the similar is true for the case $B_1(z)=0$.
Hence, in general case, one may define $S_A$ and $S_B$ by
\begin{align*}
	S_A(z) &{}=\textstyle -\frac{3\l}4 \Jbr{z}^{-1} |B_1(z)| , & S_B(z) &{}\textstyle=\frac{\l}2 \Jbr{z}^{-1} |B_1(z)|
\end{align*}
if $A_1(z) =0$, by
\begin{align*}
	S_A(z) &{}=\textstyle -\frac{\l}2 \Jbr{z}^{-1} |A_1(z)| , & S_B(z) &{}\textstyle=\frac{3\l}4 \Jbr{z}^{-1} |A_1(z)|
\end{align*}
if $B_1(z) =0$, and by \eqref{E:SASB2D} if $A_1(z)\neq0$ and $B_1(z)\neq0$.
\end{remark}

\begin{remark}\label{R:genAssumption}
The generalized phase modification in Remark \ref{R:genSASB} allows us to have the same conclusion as in Theorem \ref{T:main} under another assumption.
One example is the following \emph{split assumption}: $A_1,B_1 \in H^{2.2}$ satisfy $\supp A_1 \cap \supp B_1 = \emptyset$.
In this case there is no interaction between two half-Klein Gordon waves in the leading order, which makes the situation considerably simple.
One can also consider a more general \emph{mixed condition}: 
$(A_1,B_1)$ is the sum of two pairs of functions of which support (as a $\C \times \C$-valued function) are disjoint; and one of the pair satisfies Assumption \ref{A:2D} and the other satisfies the split condition.
\end{remark}

\section{Reduction to the choice of asymptotic profile}\label{S:FVP}
To begin with, we recall a lemma from \cite{MS2}, which implies that the theorem holds if we choose a good asymptotic profile.

\begin{proposition}[\cite{MS2}] \label{prop:FVP2d}
Let $d=2$.
Let $\delta_0$ be a constant such that $1/2<\delta_0<1$. 
Then there exist constants $T>0$ and $\eps_0>0$ such that if 
a given profile $\widetilde{u}(t,x) \in C([T,\I);H^{1/2})$ satisfies
\begin{align}
\|\widetilde{u}(t)\|_{L^{\infty}_x} &{}\le \eps_0 t^{-1}, \label{eq:A1}\\
\|((\square +1 )\widetilde{u} - N(\widetilde{u}))(t)\|_{L^2_x}  &{} \le \eps_0 t^{-1-\delta_0}, \label{eq:A2}
\end{align}
for all $t\ge T$ 
then there exists a unique solution $u \in C([T,\infty);H^{1/2})$
for the equation \eqref{E:NLKG2} satisfying
\begin{equation}\label{E:pr2d}
\sup_{t \ge T} t^{\delta_0} (\|u-\widetilde{u}\|_{L^{\infty}((t,\infty);H^{1/2}_x)}
+\|u-\widetilde{u}\|_{L^4((t,\infty);L^{4}_x)}) <\infty.
\end{equation}
Moreover, if $\widetilde{u}(t)$ satisfies
\begin{equation}\label{E:condc2d}
	\liminf_{t\to\I} \norm{\Im \widetilde{u}(t)}_{L^2_x(\R^2)} >0
\end{equation}
in addition then $u(t)$ is a complex-valued solution.
\end{proposition}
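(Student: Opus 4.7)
The plan is to realize the true solution as $u = \widetilde{u} + w$, with $w(t)\to 0$ as $t\to\I$, by a contraction mapping argument. Letting $R := (\square+1)\widetilde{u} - N(\widetilde{u})$ denote the profile error (so $\Lebn{R(t)}{2}\le \eps_0 t^{-1-\delta_0}$ by \eqref{eq:A2}), the unknown $w$ satisfies $(\square+1)w = N(\widetilde{u}+w) - N(\widetilde{u}) - R$. I would decompose into half-Klein--Gordon components $w_\pm := \tfrac12(w \pm i\Jbr{\nabla}^{-1}\p_t w)$, impose the terminal condition $w_\pm(t)\to 0$, and obtain the integral formulation
\[
	w_\pm(t) = \pm \tfrac{i}{2}\int_t^\I e^{\mp i(t-s)\Jbr{\nabla}}\Jbr{\nabla}^{-1}\bigl[N(\widetilde{u}+w) - N(\widetilde{u}) - R\bigr](s)\,ds =: \Phi_\pm(w)(t).
\]

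I would then run Banach's fixed point theorem for $\Phi := \Phi_+ + \Phi_-$ on a small ball of the space $X_T$ carrying the norm
\[
	\tnorm{w}_{X_T} := \sup_{t\ge T} t^{\delta_0}\bigl(\norm{w}_{L^\I((t,\I);H^{1/2})} + \norm{w}_{L^4((t,\I);L^4)}\bigr).
\]
The main analytic tool is the Klein--Gordon Strichartz estimate in two dimensions: the $\Jbr{\nabla}^{-1}$-smoothing in the Duhamel kernel compensates for the derivative loss in the admissible pair $(L^4_tL^4_x)$, so both components of $\tnorm{\Phi(w)}_{X_T}$ over $(t,\I)$ are bounded by an integral in $s\in(t,\I)$ of an $L^2$-type norm of the forcing. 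The $R$-contribution yields $\int_t^\I s^{-1-\delta_0}\,ds \lesssim t^{-\delta_0}$, exactly the target rate.

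The nonlinear difference obeys the pointwise bound $\abs{N(\widetilde{u}+w)-N(\widetilde{u})} \lesssim (|\widetilde{u}|+|w|)|w|$ by gauge invariance and degree-two homogeneity. Combining \eqref{eq:A1} (giving $\Lebn{\widetilde{u}}{\I}\lesssim \eps_0 t^{-1}$) with the uniform $H^{1/2}$-bound on $\widetilde{u}$ and Gagliardo--Nirenberg yields $\Lebn{\widetilde{u}}{4}\lesssim t^{-1/2}$, and H\"older in $s$ on the ball $\{\tnorm{w}_{X_T}\le \eps_0^{1/2}\}$ then controls each of $\int_t^\I \Lebn{\widetilde{u}}{\I}\Lebn{w}{2}\,ds$, $\int_t^\I \Lebn{\widetilde{u}}{4}\Lebn{w}{4}\Lebn{w}{2}\,ds$, and $\int_t^\I \Lebn{w}{4}^2\Lebn{w}{2}\,ds$ by $C\eps_0^{1/2}t^{-\delta_0}\tnorm{w}_{X_T}$, closing the contraction once $\eps_0$ is small and $T$ large. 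The main technical obstacle I anticipate is the $H^{1/2}$-bound for the nonlinear difference, since $|u|u$ is only Lipschitz (not $C^1$) at $u=0$: one would use a fractional Leibniz/Moser estimate adapted to gauge-invariant nonlinearities at the threshold $s=1/2$, e.g.\ via a Besov-space characterisation, exploiting that $|u|$ is real-linearly Lipschitz on $\C$. Uniqueness and the convergence rate \eqref{E:pr2d} follow directly from the fixed point. Finally, under \eqref{E:condc2d}, since $\Lebn{u-\widetilde{u}}{2}\lesssim \Sobn{u-\widetilde{u}}{1/2}\lesssim t^{-\delta_0}\to 0$, the reverse triangle inequality gives $\liminf_{t\to\I}\Lebn{\Im u(t)}{2}\ge \liminf_{t\to\I}\Lebn{\Im\widetilde{u}(t)}{2}>0$, so $u$ is complex-valued for all sufficiently large $t$.
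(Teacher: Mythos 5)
Your proposal follows essentially the same route as the paper: Proposition \ref{prop:FVP2d} is quoted from \cite{MS2}, where it is proved by exactly this kind of contraction argument with Klein--Gordon Strichartz estimates in the weighted norm of \eqref{E:pr2d} (the paper itself describes the method as ``a standard contraction argument with Strichartz' estimate''), and the complex-valuedness addendum is obtained in the paper by the same one-line observation you give, namely that \eqref{E:pr2d} and \eqref{E:condc2d} together with the reverse triangle inequality force $\norm{\Im u(t)}_{L^2_x}\gtrsim 1$ for large $t$. A few slips in your sketch are worth fixing, though none changes the method. First, the displayed integrals are shaped for a cubic nonlinearity: for $N(u)=\l|u|u$ the pointwise bound $(|\widetilde u|+|w|)|w|$ gives in $L^2_x$ only the two terms $\Lebn{\widetilde u}{\I}\Lebn{w}{2}$ and $\Lebn{w}{4}^2$; your second and third integrals carry a spurious extra factor $\Lebn{w}{2}$, and the $L^4_x$ bound on $\widetilde u$ you invoke rests on a uniform-in-time $H^{1/2}$ bound that is not among the hypotheses (nor is it needed, since \eqref{eq:A1} suffices). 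Second, the genuinely quadratic term $\int_t^\I\Lebn{w(s)}{4}^2\,ds$ is not controlled by plain H\"older on the infinite interval; summing dyadically in time gives a bound of order $t^{1/2-2\delta_0}\tnorm{w}_{X_T}^2$, and this is precisely where the hypothesis $\delta_0>1/2$ (together with taking $T$ large) is used to close the contraction. Finally, the fractional Leibniz difficulty you anticipate does not arise: since the Duhamel kernel carries $\Jbr{\nabla}^{-1}$, an $L^2_x$ bound on the forcing already controls the $H^{1/2}$ and $L^4L^4$ Strichartz norms of $w$, so no $H^{1/2}$ estimate of $N(\widetilde u+w)-N(\widetilde u)$ is required.
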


The latter half is not in \cite{MS2} but the proof is easy.
We just note that \eqref{E:pr2d} and \eqref{E:condc2d} give us
$\norm{\Im u(t)}_{L^2_x}\gtrsim 1$ for large $t$.

One has a similar result for one dimensional case.

\begin{proposition} \label{prop:FVP1d}
Let $d=1$.
Let $\delta_0$ be a constant such that $1/4<\delta_0 \le5/12$. 
Then there exist constants $T>0$ and 
$\eps_0>0$ such that if 
a given profile $\widetilde{u}(t,x) \in C([T,\I);L^2) $ satisfies
\begin{align}
\|\widetilde{u}(t)\|_{L^{\infty}_x} &{}\le \eps_0 t^{-\frac12}, \label{eq:A3}\\
\|((\square +1 )\widetilde{u} - N(\widetilde{u}))(t)\|_{L^2_x}  &{} \le \eps_0 t^{-1-\delta_0}, \label{eq:A4}
\end{align}
for all $t\ge T$
then there exists a unique solution $u\in C([T,\infty);L^2_x)$ to (\ref{E:NLKG2}) satisfying
\[
\sup_{t \ge T} t^{\delta_0} 
(\|\langle\pt_{x}\rangle^{\frac54-3\delta_0}(u-\widetilde{u})\|_{L^{\infty}((t,\infty);L_x^{2})}
+\|u-\widetilde{u}\|_{L^{\frac{8}{3-4\delta_0}}((t,\infty);L^{\frac{4}{4\delta_0-1}}_x)}) <\infty.
\]
Moreover, if $\widetilde{u}(t)$ satisfies
\begin{equation}\label{E:condc1d}
	\liminf_{t\to\I} \norm{\Im \widetilde{u}(t)}_{L^2_x(\R)} >0
\end{equation}
in addition then $u(t)$ is a complex-valued solution.

\end{proposition}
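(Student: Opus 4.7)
The plan is to solve the final-value problem by a contraction argument on the difference $w := u-\widetilde u$, in direct analogy with the proof of Proposition \ref{prop:FVP2d}. Setting $\mathcal R := N(\widetilde u) - (\square+1)\widetilde u$, which by (\ref{eq:A4}) satisfies $\|\mathcal R(t)\|_{L^2_x} \le \eps_0 t^{-1-\delta_0}$, the equation for $w$ reads
\[
	(\square+1)w = \bigl(N(\widetilde u+w)-N(\widetilde u)\bigr) - \mathcal R.
\]
I would represent $w$ by the Duhamel formula integrated from $+\infty$,
\[
	w(t) = \int_t^\infty \frac{\sin((s-t)\langle\partial_x\rangle)}{\langle\partial_x\rangle}\bigl[N(\widetilde u+w)(s)-N(\widetilde u)(s)-\mathcal R(s)\bigr]\,ds,
\]
so that the target condition $w(t)\to 0$ in $L^2$ as $t\to\infty$ is automatic and the problem is recast as a fixed-point equation $w=\Phi(w)$ for $\Phi$ equal to the right-hand side.

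Next I would work on a Banach space $X_T$ for $T$ large, modeled exactly on the norm appearing in the conclusion:
\[
	\|w\|_{X_T} := \sup_{t\ge T} t^{\delta_0}\Bigl(\|\langle\partial_x\rangle^{\frac54-3\delta_0}w\|_{L^\infty((t,\infty);L^2_x)} + \|w\|_{L^{\frac{8}{3-4\delta_0}}((t,\infty);L^{\frac{4}{4\delta_0-1}}_x)}\Bigr),
\]
and show that $\Phi$ is a contraction on a small ball in $X_T$. The second component is a Strichartz-type norm for the one-dimensional Klein-Gordon propagator: a direct check gives $2/q+1/r=1/2$ for the pair $(q,r)=(8/(3-4\delta_0),\,4/(4\delta_0-1))$, and the hypothesis $1/4<\delta_0\le 5/12$ precisely guarantees $q,r\ge 2$ as well as the compatibility of this pair with the fractional regularity $\frac54-3\delta_0$ carried by the first component.

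I would then bound $\Phi(w)$ in $X_T$ by combining energy and Strichartz estimates. The remainder contribution integrates to $\int_t^\infty s^{-1-\delta_0}\,ds\lesssim t^{-\delta_0}$ in $L^2$, and feeds into both components of $\|\cdot\|_{X_T}$ via the half-derivative smoothing from $\langle\partial_x\rangle^{-1}\sin((s-t)\langle\partial_x\rangle)$. For the nonlinear difference, expanding
\[
	N(\widetilde u+w)-N(\widetilde u) = 2\widetilde u\,\Re(\overline{\widetilde u}\,w) + |\widetilde u|^2 w + O(|\widetilde u|\,|w|^2 + |w|^3),
\]
the two terms linear in $w$ absorb a factor $\|\widetilde u\|_{L^\infty_x}^2\lesssim \eps_0^2 t^{-1}$ from (\ref{eq:A3}), producing an integrable source of size $\eps_0^2 t^{-1-\delta_0}$; the mixed and purely cubic pieces are handled by H\"older in the Strichartz pair together with a fractional Leibniz (Kato-Ponce) estimate, losing derivatives which are supplied by the $H^{\frac54-3\delta_0}$-component of $\|\cdot\|_{X_T}$. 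Each contribution comes with a small prefactor in $\eps_0$ or $\|w\|_{X_T}$, so $\Phi$ is self-mapping and contractive on a ball of radius $C\eps_0$ in $X_T$ for $T$ large. Uniqueness follows from the same Lipschitz bound, and the complex-valuedness assertion is immediate: since $\|u(t)-\widetilde u(t)\|_{L^2}\lesssim t^{-\delta_0}\to 0$, the hypothesis (\ref{E:condc1d}) forces $\liminf_{t\to\infty}\|\Im u(t)\|_{L^2}>0$, so $u$ cannot be real-valued near infinity.

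The main obstacle is the genuine cubic self-interaction $|w|^3$, which is $L^2$-critical in one dimension and leaves no scaling slack. The exotic exponents in $X_T$ must be tuned so that three copies of $w$ H\"older-couple into $L^{\frac{8}{3-4\delta_0}}_t L^{\frac{4}{4\delta_0-1}}_x$ and produce a time weight integrable against $s^{-1-\delta_0}$ through Strichartz duality; the endpoint $\delta_0=5/12$ is exactly where this closure becomes marginal, which accounts for the upper bound in the hypothesis, while $\delta_0>1/4$ is needed for the Strichartz pair itself to make sense with $r<\infty$.
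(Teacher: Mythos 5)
Your proposal is correct and coincides with the paper's intended argument: the authors state only that the proposition ``follows by a standard contraction argument with Strichartz' estimate, as in the two dimensional case,'' and your setup (Duhamel from $+\infty$, contraction in the weighted mixed norm $X_T$, splitting $N(\widetilde u+w)-N(\widetilde u)$ into linear-in-$w$ and higher-order parts) is exactly that argument. One small imprecision: the lower bound $\delta_0>1/4$ is not needed to make the Strichartz pair ``make sense'' (the lemma allows $r=\infty$), but rather to keep the pair off the borderline $q=4$ and to leave room for the H\"older/interpolation closure in the cubic estimate; this does not affect the overall correctness of your outline.
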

This proposition follows by a standard contraction argument with Strichartz' estimate, as in the two dimensional case.

\begin{lemma}[Stirchartz estimate in one dimension]
Let $d=1$ and $T \in \R$.
Let $\Phi$ be an operator given by
\[
\Phi[g](t)
:=\int_t^{\infty} \sin((t-\tau)\langle\pt_{x}\rangle)\langle\pt_{x}\rangle^{-1}g(\tau)d\tau.
\]
Then, 
\begin{align*}
\|\Jbr{\d_{x}}^{\frac{3}{2r}}\Phi [g]
\|_{L_{t}^{q}([T,\infty),L_{x}^{r})}
&{}\lesssim \|\Jbr{\d_{x}}^{\frac12-\frac{3}{2\tilde{r}}}g\|_{L_{t}^{\tilde{q}'}([T,\infty),L_{x}^{\tilde{r}'})}
\end{align*}
holds for any $r, \tilde{r}\in[2,\infty]$ and $ q, \tilde{q}\in [4,\infty]$ obeying the scaling condition
\[\textstyle
\frac2q+\frac1r=\frac2{\tilde{q}}+\frac1{\tilde{r}}=\frac12.
\] 
\end{lemma}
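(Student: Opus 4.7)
The plan is to follow the standard three-step derivation of an inhomogeneous Strichartz estimate: establish an $L^1\to L^\infty$ dispersive decay bound for the half-wave Klein--Gordon propagator, feed it into the abstract Keel--Tao theorem to obtain the homogeneous Strichartz estimate with admissibility $\frac2q+\frac1r=\frac12$, and then pass to the retarded inhomogeneous form by duality combined with the Christ--Kiselev lemma.

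The first step is to write $\sin((t-\tau)\Jbr{\d_x})=(U_+(t-\tau)-U_-(t-\tau))/(2i)$, with $U_\pm(t):=e^{\pm it\Jbr{\d_x}}$, and treat each sign separately. A routine stationary-phase analysis of the oscillatory integral representation of $U_\pm(t)f$, whose phase $x\xi\pm t\Jbr\xi$ has second $\xi$-derivative $\pm t\Jbr\xi^{-3}$, gives the dispersive bound
\[
\|U_\pm(t)\Jbr{\d_x}^{-3/2}f\|_{L^\infty_x}\lesssim |t|^{-1/2}\|f\|_{L^1_x},\qquad |t|\ge 1,
\]
the loss of $3/2$ derivatives reflecting the degeneracy of the phase at high frequencies; the case $|t|\le 1$ is immediate from Sobolev embedding. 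Interpolating with the trivial $L^2\to L^2$ isometry and invoking the Keel--Tao theorem with effective decay rate $\sigma=1/2$ then yields the homogeneous Strichartz estimate
\[
\|U_\pm(\cdot)\phi\|_{L^q_tL^r_x}\lesssim \|\Jbr{\d_x}^{\frac34-\frac{3}{2r}}\phi\|_{L^2_x}
\]
for every admissible pair $(q,r)$ with $\frac2q+\frac1r=\frac12$ and $q\in[4,\infty]$.

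I would next dualize this to
\[
\Big\|\int_{\R}U_\pm(-\tau)g(\tau)\,d\tau\Big\|_{L^2_x}\lesssim \|\Jbr{\d_x}^{\frac34-\frac{3}{2\tilde r}}g\|_{L^{\tilde q'}_tL^{\tilde r'}_x},
\]
and compose with the homogeneous bound while absorbing the $\Jbr{\d_x}^{-1}$ factor built into $\Phi$, obtaining
\[
\Big\|\int_{\R}U_\pm(t-\tau)\Jbr{\d_x}^{-1}g(\tau)\,d\tau\Big\|_{L^q_tL^r_x}\lesssim \|\Jbr{\d_x}^{\frac12-\frac{3}{2r}-\frac{3}{2\tilde r}}g\|_{L^{\tilde q'}_tL^{\tilde r'}_x}.
\]
Shifting $\tfrac{3}{2r}$ derivatives to the left-hand side reproduces precisely the weights claimed in the lemma, but with the $\tau$-integral extended over all of $\R$. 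Since $q,\tilde q\ge 4$ forces $q>\tilde q'$, the Christ--Kiselev lemma then replaces $\int_\R$ by the retarded $\int_t^\infty$ at no cost.

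The main obstacle is the corner case $(q,r)=(4,\infty)$ (and symmetrically $(\tilde q,\tilde r)=(4,\infty)$), for which the pointwise dispersive decay cannot be integrated in $t$ by a plain Hardy--Littlewood--Sobolev argument, and the bilinear dyadic decomposition of Keel--Tao must be used. Fortunately, since the effective decay exponent $\sigma=1/2$ is strictly less than $1$, this corner is \emph{not} the forbidden Keel--Tao endpoint (which occurs only at $\sigma=1$), so the estimate does hold there, and the remainder of the argument is simply bookkeeping about derivative weights.
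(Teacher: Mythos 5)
The paper itself gives no proof of this lemma, stating only that it ``is also standard'' and omitting the detail, so there is nothing in the source to compare against; I will instead assess the proposal on its own. Your outline is the standard derivation and is essentially correct: the dispersive bound $\|e^{\pm it\Jbr{\d_x}}\Jbr{\d_x}^{-3/2}\|_{L^1\to L^\infty}\lesssim |t|^{-1/2}$ (the low-$|t|$ regime follows because $\Jbr{\xi}^{-3/2}\in L^1(\R)$, so the kernel of $e^{\pm it\Jbr{\d_x}}\Jbr{\d_x}^{-3/2}$ is uniformly bounded for $|t|\le 1$), the frequency-localized Keel--Tao/$TT^*$ estimate $\|e^{\pm it\Jbr{\d_x}}P_N\phi\|_{L^q_tL^r_x}\lesssim N^{\frac34-\frac3{2r}}\|P_N\phi\|_{L^2}$ on the sharp admissible line $\tfrac2q+\tfrac1r=\tfrac12$, Littlewood--Paley summation to pass to the $\Jbr{\d_x}^{\frac34-\frac3{2r}}$ weight, duality, and Christ--Kiselev (valid since $q\ge4>\tfrac43\ge\tilde q'$). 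Unwinding the $\Jbr{\d_x}^{-1}$ built into $\Phi$ then gives exactly the stated weights $\tfrac3{2r}$ and $\tfrac12-\tfrac3{2\tilde r}$.

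Your closing remark about the corner $(q,r)=(4,\infty)$ is, however, slightly misdirected. The plain $TT^*$ plus Hardy--Littlewood--Sobolev argument does \emph{not} break down there: the relevant time kernel after interpolating the $L^1\to L^\infty$ and $L^2\to L^2$ bounds has exponent $\sigma(1-\tfrac2r)=\tfrac12<1$, comfortably inside the HLS range, and the dual Lebesgue indices $L^{4/3}_t\to L^4_t$ are non-endpoint. In fact, at $r=\infty$ the required derivative weight is exactly $\tfrac34$, which the \emph{un}-decomposed $TT^*$ argument (applied directly to $e^{\pm it\Jbr{\d_x}}\Jbr{\d_x}^{-3/4}$) already produces, so the dyadic Littlewood--Paley step is not even needed there. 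The place the dyadic decomposition is genuinely needed is for intermediate $2<r<\infty$, where one wants the gain $\tfrac34-\tfrac3{2r}<\tfrac34$; there the square-function summation works cleanly since $q,r<\infty$. So the argument closes at every admissible pair, but for a somewhat different reason than you gave.
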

The proof is also standard. We omit the detail.

\section{One dimensional case}\label{S:1d}

\subsection{Choice of asymptotic profile}

Before two dimensional case, we consider the one dimensional case in this section.
Compared with the two dimensional case, the one dimensional case is easier in several respects.
However, it contains the crucial point of the proof, which is the same as the two dimensional case.

The point is that an easy choice 
$\widetilde{u}=u_{\mathrm{ap}}$ \emph{does not satisfy} the assumption \eqref{eq:A4}. 
To see this, we expand $N(u_{\mathrm{ap}})$ and split it into the resonant and non-resonant parts:
\begin{align*}
	N(u_{\mathrm{ap}}) 
	={}&\l t^{-\frac32}{\bf 1}_{\{|x|<t\}}(t,x) 
	[(|A_1(\mu)|^2+2|B_1(\mu)|^2) A_1(\mu) e^{i\theta + iS_A(\mu)\log t} \\
	& \qquad\qquad \qquad \qquad+ (2|A_1(\mu)|^2+|B_1(\mu)|^2) B_1(\mu) e^{-i\theta + iS_B(\mu)\log t}]\\
	&+\l t^{-\frac32}{\bf 1}_{\{|x|<t\}}(t,x)[A_1(\mu)^2\overline{B_1(\mu)} e^{3i\theta+2iS_A(\mu)\log t-iS_B(\mu)\log t} \\
	& \qquad\qquad \qquad \qquad \quad + B_1(\mu)^2\overline{A_1(\mu)}e^{-3i\theta-iS_A(\mu)\log t+2iS_B(\mu)\log t}]\\
	=:{}& N_{\mathrm{r}}(u_{\mathrm{ap}}) + N_{\mathrm{nr}}(u_{\mathrm{ap}}).
\end{align*}
Notice that all terms in the right hand side have the same order of size in $L^2_x$ topology.
It will turn out that by the choice of the phase correction, $(\square +1 )u_{\mathrm{ap}}$ cancels only the resonant part
$N_{\mathrm{r}}(u_{\mathrm{ap}})$, that is, we have
\[
	\norm{(\square +1 )u_{\mathrm{ap}}-N_{\mathrm{r}}(u_{\mathrm{ap}})}_{L^2_x} \lesssim_{A_1,B_1} t^{-2}(\log t)^2
\]
for $t\ge 3$ (see Proposition \ref{P:uapNr1D}). This inequality also shows that
\[
	\norm{(\square +1 )u_{\mathrm{ap}}-N(u_{\mathrm{ap}})}_{L^2_x} \ge \norm{N_{\mathrm{nr}}(u_{\mathrm{ap}})}_{L^2_x} + o(t^{-1}).
\]	
The right hand side is $O(t^{-1})$ (at least if we choose $A_1$ and $B_1$ suitably).
In such a case, $u_{\mathrm{ap}}$ does not satisfy \eqref{eq:A4}.

In order to cancel out the non-resonant term $N_{\mathrm{nr}}(u_{\mathrm{ap}})$, we introduce a higher order correction $v_{\mathrm{ap}}$,
and define
\[
	\widetilde{u}(t,x)=u_{\mathrm{ap}}(t,x) + v_{\mathrm{ap}}(t,x),
\]
\begin{multline}\label{E:1dvp}
	v_{\mathrm{ap}}(t,x) = t^{-\frac32}{\bf 1}_{\{|x| < t\}}(t,x)
\Big( A_2(\mu)e^{3i\theta+2iS_{A}(\mu)\log t
-iS_{B}(\mu)\log t}\\
+B_2(\mu)e^{-3i\theta-iS_{A}(\mu)\log t +2iS_{B}(\mu)\log t}\Big),
\end{multline}
where the phase functions $S_{A}$ and $S_{B}$ are given by \eqref{E:SASB1D} and 
\begin{align}
A_{2}(\xi)=&-\tfrac{\lambda}{8}A_{1}^{2}(\xi)\overline{B_{1}(\xi)},&
B_{2}(\xi)=&-\tfrac{\lambda}{8}\overline{A_{1}(\xi)}B_{1}^{2}(\xi).\label{pn}
\end{align}

Next proposition ensures that $u_{\mathrm{ap}}+v_{\mathrm{ap}}$ is one of the profiles
which satisfy the assumptions \eqref{eq:A1} and \eqref{eq:A2}.

\begin{proposition} \label{P:ua} 
Assume $A_{1},B_{1}\in H^{2,5/2}$. 
Let $u_{\mathrm{ap}}$ and $v_{\mathrm{ap}}$ be defined by \eqref{E:uap1D} and \eqref{E:1dvp}, respectively. 
Let $\widetilde{u}=u_{\mathrm{ap}}+v_{\mathrm{ap}}$.
Then,  
\begin{align}
\|\widetilde{u}(t)\|_{L^{\infty}_x} &{}\lesssim t^{-\frac12}
(\|A_1\|_{L^\infty}+\|B_1\|_{L^\infty})\Jbr{\|A_1\|_{L^{\infty}}+\|B_1\|_{L^\infty}}^2,
\label{eq:ua1}
\end{align}
\begin{multline}
\|(\Box+1)\widetilde{u}(t) -N(\widetilde{u}(t))\|_{L^2_x} \\
	\lesssim t^{-2}(\log t)^{2}(\|A_1\|_{H^{2,\frac52}}+\|B_1\|_{H^{2,\frac52}})\Jbr{\|A_1\|_{H^{2,\frac52}}+\|B_1\|_{H^{2,\frac52}}}^8
\label{eq:ua2}
\end{multline}
hold for $t\ge 3$.
In particular, $\widetilde{u}(t)$ satisfies \eqref{eq:A3} and \eqref{eq:A4} for any $\delta_0<1$
if $A_1,B_1 \in H^{2,5/2}$ are small in $L^\I(\R^2)$ and if $T$ is large.
Further, if the set $\{ \xi \in \R \ |\ |A_1(\xi)|\neq |B_1(\xi)| \}$ has positive measure then $\widetilde{u}(t)$ satisfies \eqref{E:condc1d}.
\end{proposition}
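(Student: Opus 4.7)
The $L^\infty$ bound \eqref{eq:ua1} is immediate: on $|x|<t$ one has $|u_{\mathrm{ap}}|\le t^{-1/2}(|A_1(\mu)|+|B_1(\mu)|)$ and, by \eqref{pn}, $|v_{\mathrm{ap}}|\lesssim t^{-3/2}(\norm{A_1}_{L^\I}+\norm{B_1}_{L^\I})^3$. The substantive task is \eqref{eq:ua2}. My plan is to split
\[
(\square+1)\widetilde u - N(\widetilde u) = \bigl[(\square+1)u_{\mathrm{ap}} - N_{\mathrm{r}}(u_{\mathrm{ap}})\bigr] + \bigl[(\square+1)v_{\mathrm{ap}} - N_{\mathrm{nr}}(u_{\mathrm{ap}})\bigr] - \bigl[N(\widetilde u) - N(u_{\mathrm{ap}})\bigr],
\]
denote the three brackets $(\mathrm{I}),(\mathrm{II}),(\mathrm{III})$, and show each is $O_{L^2_x}(t^{-2}(\log t)^2)$.

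The central computation is to apply $\square+1$ to a model profile $g_n(t,x) := t^{-\alpha}{\bf 1}_{\{|x|<t\}}F(\mu)e^{in\theta + iR(\mu)\log t}$, with $(\alpha,n) = (1/2,\pm1)$ for the components of $u_{\mathrm{ap}}$ and $(\alpha,n) = (3/2,\pm3)$ for those of $v_{\mathrm{ap}}$. Writing $g_n = e^{i\Phi_n} h$ with $\Phi_n = n\theta + R\log t$ and $h = t^{-\alpha}F(\mu)$, and using $\theta=-\tau$, $\tau=\sqrt{t^2-x^2}$ together with $(\partial_t\tau)^2 - (\partial_x\tau)^2 = 1$ and $\square\tau = 1/\tau = \Jbr{\mu}/t$, expansion of the general identity
\[
(\square+1)(e^{i\Phi_n}h) = e^{i\Phi_n}\bigl[(1-(\nabla\Phi_n)^2)h + i\square\Phi_n\,h + 2i(\partial_t\Phi_n\partial_t h - \partial_x\Phi_n\partial_x h) + \square h\bigr]
\]
in powers of $1/t$ yields a leading coefficient at order $t^{-3/2}$ proportional to $\Jbr{\mu}R(\mu)F(\mu)$ for $n=\pm 1$ (where $(1-n^2)$ vanishes and only the $R$-induced term survives) and equal to $(1-n^2)F(\mu) = -8F(\mu)$ for $n=\pm 3$; the remainder is of pointwise size $t^{-5/2}(\log t)^2$ and involves at most second $\mu$-derivatives of $F$ and $R$. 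The phase modulations $S_A, S_B$ in \eqref{E:SASB1D} and the amplitudes $A_2, B_2$ in \eqref{pn} are calibrated precisely so that these leading $t^{-3/2}$ contributions cancel $N_{\mathrm{r}}(u_{\mathrm{ap}})$ and $N_{\mathrm{nr}}(u_{\mathrm{ap}})$, respectively, reducing $(\mathrm{I})$ and $(\mathrm{II})$ to pointwise remainders of size $t^{-5/2}(\log t)^2$.

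For bracket $(\mathrm{III})$, the cubic polynomial $N(u)=\lambda|u|^2 u$ yields $N(\widetilde u) - N(u_{\mathrm{ap}}) = \sum C_{abcd}\,u_{\mathrm{ap}}^a\overline{u_{\mathrm{ap}}}^b v_{\mathrm{ap}}^c\overline{v_{\mathrm{ap}}}^d$ with $a+b+c+d=3$ and $c+d\ge 1$; the largest is of pointwise size $t^{-5/2}(\norm{A_1}_{L^\I}+\norm{B_1}_{L^\I})^4$, and terms with higher powers of $v_{\mathrm{ap}}$ are strictly smaller. To pass from pointwise to $L^2_x$ I use the change of variables $\mu=x/\tau$ on $|x|<t$, whose Jacobian is $dx = (t/\Jbr{\mu}^3)\,d\mu$, so that $\norm{t^{-5/2}G(\mu)}_{L^2_x} = t^{-2}\norm{\Jbr{\mu}^{-3/2}G}_{L^2_\mu}$. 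Expanding the remainders by Leibniz, and using that $S_A, S_B$ are quadratic in $A_1, B_1$ and $A_2, B_2$ cubic, produces a polynomial of degree at most $8$ in $\norm{A_1}_{H^{2,5/2}}+\norm{B_1}_{H^{2,5/2}}$; the weight $\Jbr{\mu}^{5/2}$ is more than enough to absorb $\Jbr{\mu}^{-3/2}$ from the Jacobian and any negative $\Jbr{\mu}$-powers arising from differentiation of $\theta,\mu,S_A,S_B$.

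The main obstacle is the bookkeeping of subleading terms in $(\square+1)u_{\mathrm{ap}}$: because $S_A(\mu)\log t$ is an unbounded phase, each $\mu$-derivative of $e^{iS_A\log t}$ introduces a factor of $\log t$, so second derivatives give $(\log t)^2$, and one must verify that all such $(\log t)^j$ contributions remain at the harmless $t^{-5/2}$ level rather than leaking into the leading $t^{-3/2}$. A secondary issue is that the distributional spatial derivative of ${\bf 1}_{\{|x|<t\}}$ is a Dirac mass on the light cone $|x|=t$, but the embedding $H^2(\R)\hookrightarrow C_0(\R)$ forces $A_1(\mu), B_1(\mu) \to 0$ as $|\mu|\to\I$, so $\widetilde u$ has vanishing trace on $|x|=t$ and the boundary distribution does not appear. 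For the complex-valuedness claim, I write $\Im\widetilde u$ as the imaginary part of the two modified wave packets: the cross term oscillates and contributes $o(1)$ in $L^2_x$ by nonstationary phase, while the diagonal gives $\norm{\Im\widetilde u(t)}_{L^2_x}^2 \gtrsim \int_\R(|A_1(\mu)| - |B_1(\mu)|)^2\Jbr{\mu}^{-3}\,d\mu > 0$ under the measure-positivity hypothesis, yielding \eqref{E:condc1d}.
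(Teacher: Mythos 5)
Your decomposition and the mechanics of the main estimate mirror the paper's route: the paper also splits $(\square+1)\widetilde u - N(\widetilde u)$ into the same three pieces and reduces $(\mathrm{I})$, $(\mathrm{II})$ to pointwise $t^{-5/2}(\log t)^2$ remainders via a hyperbolic-coordinate lemma (Lemma~\ref{L:hc1d}), then passes to $L^2_x$ with exactly the Jacobian identity $\tnorm{t^{-5/2}G(\mu)}_{L^2_x} = t^{-2}\tnorm{\Jbr{\mu}^{-3/2}G}_{L^2_\mu}$ you wrote. Your direct computation of $(\square+1)$ on $e^{i\Phi_n}h$ is the same calculation in slightly different clothing; the cancellation you note between $i\square\Phi_n\,h$ and $2i(\partial_t\Phi_n\partial_t h - \partial_x\Phi_n\partial_x h)$ at order $t^{-3/2}$ is what makes $f_1$ (and not an extra phantom term) the only surviving leading piece. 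So for \eqref{eq:ua1}--\eqref{eq:ua2} you are essentially reproducing the paper.

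Your complex-valuedness argument is genuinely different and in fact a bit cleaner, but your stated justification is imprecise. Writing $P = A_1 e^{-i\Jbr{z}^{-1}t + iS_A\log t}$ and $Q = B_1 e^{i\Jbr{z}^{-1}t + iS_B\log t}$, one has
\[
|\Im(P+Q)|^2 = \tfrac12(|A_1|^2 + |B_1|^2) - \Re(PQ) + \Re(P\bar Q) - \tfrac12\Re(P^2) - \tfrac12\Re(Q^2).
\]
The last three terms carry a factor $e^{\pm 2i\Jbr{z}^{-1}t}$ and do go to zero after integration against $\Jbr{z}^{-3}$ by (non)stationary phase in $z$. But the $PQ$ term has \emph{no} $\theta$-oscillation at all: $PQ = A_1 B_1 e^{i(S_A+S_B)\log t}$, which varies only slowly in $t$ and whose $z$-derivative need not be bounded away from zero. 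So "the cross term oscillates and contributes $o(1)$ by nonstationary phase" is not the reason your $\int(|A_1|-|B_1|)^2\Jbr{\mu}^{-3}\,d\mu$ bound holds. What makes it hold is the pointwise absorption $-\Re(PQ) \ge -|A_1||B_1|$, combined with $\tfrac12(|A_1|^2+|B_1|^2) - |A_1||B_1| = \tfrac12(|A_1|-|B_1|)^2$. This is actually simpler than the paper, which instead extracts a compact $\Omega\subset\{|A_1|\neq|B_1|\}$ with $\inf_\Omega|\partial_\xi(S_A+S_B)|>0$ precisely so as to kill $\int_\Omega A_1 B_1 e^{i(S_A+S_B)\log t}\,dz$ by nonstationary phase; your crude pointwise bound sidesteps that auxiliary construction entirely. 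I would encourage you to restate the justification accordingly: the $\theta$-oscillatory cross terms $P^2,\ Q^2,\ P\bar Q$ vanish in the limit, while the non-oscillatory term $PQ$ is absorbed pointwise into the diagonal, yielding $\liminf_{t\to\infty}\tnorm{\Im u_{\mathrm{ap}}(t)}_{L^2_x}^2 \ge \tfrac12\int_\R(|A_1|-|B_1|)^2\Jbr{\mu}^{-3}\,d\mu$, which is positive under the measure hypothesis. (You should also note $\tnorm{v_{\mathrm{ap}}(t)}_{L^2_x} = O(t^{-1})$ so the correction does not affect the $\liminf$, as the paper does.)
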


\subsection{Hyperbolic coordinate}

To prove Proposition \ref{P:ua}, as in the previous works \cite{DFX,S1},
we introduce the hyperbolic coordinate
\begin{equation}\label{E:hc}
	t = \tau \cosh \sigma,
	\quad x = \tau \sinh \sigma
\end{equation}
for $|x|<t$ and $t\ge 1$.
More explicitly, they are given by
\begin{equation}\label{E:ihc}
	\tau = \sqrt{t^2 - x^2}, \quad \sigma = \tanh^{-1} \( \frac{x}{t} \),
\end{equation}
and the range is $\{ (\tau,\sigma) \in \R^2 \ |\  \sigma \in \R,\,\tau \ge (\cosh \sigma)^{-1}\}$.
In the hyperbolic coordinate, $\theta(t,x)=-\tau$ and $\mu(t,x) = \sinh \sigma$, and so
$u_{\mathrm{ap}}$ and $v_{\mathrm{ap}}$ given by \eqref{E:uap1D} and \eqref{E:1dvp}, respectively, 
can be rewritten as
\begin{align*}
	u_{\mathrm{ap}}(t,x) ={}& \tau^{-\frac12}(\cosh \sigma)^{-\frac12}( A_1(\sinh \sigma) e^{-i\tau + iS_A(\sinh \sigma)(\log \tau + \log \Jbr{\sinh \sigma})}\\
	&+ B_1(\sinh \sigma) e^{i\tau + iS_B(\sinh \sigma)(\log \tau + \log \Jbr{\sinh \sigma})})
\end{align*}
and
\begin{align*}
	&v_{\mathrm{ap}}(t,x) \\
	&{}= \tau^{-\frac32}(\cosh \sigma)^{-\frac32}(A_2(\sinh \sigma) e^{-3i\tau + i(2S_A(\sinh \sigma)-S_B(\sinh \sigma))(\log \tau + \log \Jbr{\sinh \sigma})}\\
	&\qquad + B_2(\sinh \sigma) e^{3i\tau + i(-S_A(\sinh \sigma)+2S_B(\sinh \sigma))(\log \tau + \log \Jbr{\sinh \sigma})}).
\end{align*}
For each fixed $t\ge 1$,
\[\textstyle
	\norm{v(\sinh \sigma, \tau)}_{L^2_x(|x|<t)}
	= t^\frac12 \norm{\Jbr{z}^{-\frac32} v\(z, \frac{t}{\Jbr{z}} \)}_{L^2_z(\R)}.
\]
It is well-known that
\[
	\square_{t,x} = \partial_\tau^2 + \tau^{-1} \partial_\tau -\tau^{-2} \partial_\sigma^2.
\]
By a further calculation, we have the following.
\begin{lemma}\label{L:hc1d}
Let $n, m  \in \R$.
If a function $v(t,x)$ is of the form
\[
	v(t,x) = \tau^{-\frac12- m } (\cosh \sigma)^{-\frac12- m } e^{in \tau } H(\sinh \sigma ,\tau)
\]
in the hyperbolic coordinate with some function $H(\cdot,\cdot)$. Then, we have
\begin{align*}
	[(\square_{t,x} + 1) v](t,x) ={}& t^{-\frac32} e^{in\tau}f_1(\sinh \sigma,\tau) + t^{-\frac32} e^{in\tau}f_2(\sinh \sigma,\tau)\\
	&{}+ t^{-\frac32- m  } e^{in\tau}R_1(\sinh \sigma,\tau) + t^{-\frac52- m }e^{in\tau} R_2(\sinh \sigma,\tau),
\end{align*}
where
\begin{align*}
	f_1(z,\tau) &{}= (1-n^2)\tau^{1- m }\Jbr{z}^{1- m } H(z,\tau),\\
	f_2(z,\tau) &{}=\textstyle 2in \tau^{1- m } \Jbr{z}^{1- m }\frac{\partial}{\partial \tau} H(z,\tau),\\
	R_1(z,\tau) &{}=\textstyle  m  \Jbr{z}\(  - {2in} - 2\frac{\partial}{\partial \tau} +\frac{ m +1}{\tau}  \)
	 H(z,\tau),\\
	R_2(z,\tau) &{}\textstyle = \(\Jbr{z}^2 \tau^2 \frac{\partial^2}{\partial \tau^2}  -
	\Jbr{z}^{4+ m } \frac{\partial^2}{\partial z^2}\Jbr{z}^{- m } + \frac3{4} \)
	 H(z,\tau).
\end{align*}
\end{lemma}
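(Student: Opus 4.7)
The plan is to prove this lemma by a direct but organized computation in hyperbolic coordinates, leveraging the identity $\square_{t,x} = \partial_\tau^2 + \tau^{-1}\partial_\tau - \tau^{-2}\partial_\sigma^2$ stated just above the lemma.

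First, I would factor the oscillation out by writing $v = F(\tau,\sigma)\,e^{in\tau}$ with $F = \tau^\alpha\Jbr{z}^\alpha H(z,\tau)$, where $\alpha := -\tfrac12-m$ and $z = \sinh\sigma$, so that $\cosh\sigma = \Jbr{z}$. Since $e^{in\tau}$ commutes with $\partial_\sigma$, a short calculation gives
\[
(\square_{t,x}+1)v = e^{in\tau}\Bigl[\square_{t,x} F + 2in\,\partial_\tau F + in\,\tau^{-1}F + (1-n^2)F\Bigr].
\]
This reduces the problem to computing $\square_{t,x} F$ and the lower-order terms on $F$. The term $(1-n^2)F = (1-n^2)\tau^\alpha\Jbr{z}^\alpha H$ is precisely $t^{-3/2}e^{in\tau}f_1$ after recognizing $t^{-3/2}\cdot\tau^{1-m}\Jbr{z}^{1-m} = \tau^\alpha\Jbr{z}^\alpha$, and similarly the leading piece of $2in\,\partial_\tau F$ produces $f_2$.

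Next I would apply the product rule to $\partial_\tau F$ and $\partial_\tau^2 F$; the "cross" contributions proportional to $\alpha/\tau$ together with $in\tau^{-1}F$ yield the $-2inm$ piece of $R_1$ (using $2in\alpha + in = -2inm$), while $(2\alpha+1)\tau^{\alpha-1}\Jbr{z}^\alpha\partial_\tau H$ coming from $\partial_\tau^2 F + \tau^{-1}\partial_\tau F$ gives the $-2m\,\partial_\tau$ piece. For the angular derivative I would write $F = \tau^\alpha\phi(\sigma)$ with $\phi(\sigma)=(\cosh\sigma)^\alpha H(\sinh\sigma,\tau)$ and expand $\phi''$ directly via $\partial_\sigma = \cosh\sigma\,\partial_z$, then substitute $\cosh\sigma = \Jbr{z}$, $\sinh^2\sigma = \Jbr{z}^2 - 1$ to obtain
\[
-\tau^{-2}\partial_\sigma^2 F = -\alpha^2\tau^{\alpha-2}\Jbr{z}^\alpha H + \alpha(\alpha-1)\tau^{\alpha-2}\Jbr{z}^{\alpha-2}H + 2m\,\tau^{\alpha-2}z\Jbr{z}^\alpha H_z - \tau^{\alpha-2}\Jbr{z}^{\alpha+2}H_{zz}.
\]
The $-\alpha^2\tau^{\alpha-2}\Jbr{z}^\alpha H$ cancels exactly against the corresponding term $\alpha^2\tau^{\alpha-2}\Jbr{z}^\alpha H$ appearing in $\partial_\tau^2 F + \tau^{-1}\partial_\tau F$.

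Finally, I would match the leftover $\tau^{\alpha-2}$ contributions to the claimed expression for $R_2$. Expanding $\Jbr{z}^{4+m}\partial_z^2(\Jbr{z}^{-m}H)$ by the product rule produces exactly the $H_{zz}$, $H_z$, and residual $H$ terms arising above, and the algebraic check reduces to the identity
\[
\alpha(\alpha-1) = \bigl(\tfrac12+m\bigr)\bigl(\tfrac32+m\bigr) = m(m+2) + \tfrac34,
\]
which guarantees that the $\tfrac34 H$ term in $R_2$ combines with the $m(m+1)\tau^{-1}H$ piece of $R_1$ and the $H$ contribution from the expansion of $\Jbr{z}^{4+m}\partial_z^2\Jbr{z}^{-m}$ to reproduce $\alpha(\alpha-1)\tau^{\alpha-2}\Jbr{z}^{\alpha-2}H$. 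The only real obstacle is bookkeeping: carefully keeping track of the three different powers of $\Jbr{z}$ that arise ($\Jbr{z}^\alpha$, $\Jbr{z}^{\alpha-2}$, $\Jbr{z}^{\alpha+2}$) and verifying that the weight $\Jbr{z}^{-m}$ inside the $\partial_z^2$ in $R_2$ is precisely what packages the residual angular terms into a single compact expression.
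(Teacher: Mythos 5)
Your direct computation is correct and is exactly the "further calculation" the paper alludes to without spelling out: after peeling off $e^{in\tau}$, expanding $\square_{t,x}=\partial_\tau^2+\tau^{-1}\partial_\tau-\tau^{-2}\partial_\sigma^2$ on $F=\tau^\alpha\Jbr{z}^\alpha H$ with $\alpha=-\tfrac12-m$, $z=\sinh\sigma$, $\Jbr{z}=\cosh\sigma$, using $2\alpha+1=-2m$ and $\sinh^2\sigma=\Jbr{z}^2-1$, and noting $m(m+2)+\tfrac34=\alpha(\alpha-1)$, every term matches the claimed $f_1,f_2,R_1,R_2$ decomposition (the $m(m+1)\tau^{-1}\Jbr{z}^\alpha H$ piece from $R_1$ cancels against an identical piece produced by the $\Jbr{z}^{4+m}\partial_z^2\Jbr{z}^{-m}$ expansion in $R_2$, leaving precisely $\alpha(\alpha-1)\Jbr{z}^{\alpha-2}H$). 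This is the same route the paper takes; nothing further is needed.
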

\subsection{Cancellation of the resonant part}

\begin{proposition}\label{P:uapNr1D} 
Let $N_{\mathrm{r}}$ be the resonant part of the nonlinearity, that is,
\begin{align*}
N_{\mathrm{r}}(u_{\mathrm{ap}})=t^{-\frac32}{\bf 1}_{\{|x|<t\}}(t,x)
(&(- 2\Jbr{\mu}S_A )A_{1}(\mu)e^{i\theta+iS_A(\mu)\log t}
\\
&
+(2\Jbr{\mu}S_B)B_{1}(\mu)e^{-i\theta+iS_B(\mu)\log t}).
\end{align*}
Then, we have
\begin{multline}\label{N2}
\|(\square+1)u_{\mathrm{ap}}-N_{\mathrm{r}}(u_{\mathrm{ap}})\|_{L_{x}^{2}}\\
\lesssim t^{-2}(\log t)^{2}(\|A_1\|_{H^{2,\frac52}}+\|B_1\|_{H^{2,\frac52}})
\Jbr{\|A_1\|_{H^{2,\frac52}}+\|B_1\|_{H^{2,\frac52}}}^{4},
\end{multline}
\end{proposition}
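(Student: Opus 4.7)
The plan is to apply Lemma \ref{L:hc1d} separately to the two half-wave components of $u_{\mathrm{ap}}$, observe that the $O(t^{-3/2})$ output exactly reproduces $N_{\mathrm{r}}(u_{\mathrm{ap}})$ by the very construction of $S_A,S_B$, and then estimate the two $R_2$-remainders in $L^2_x$.

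Concretely, decompose $u_{\mathrm{ap}}=u_{\mathrm{ap}}^{+}+u_{\mathrm{ap}}^{-}$ into its $A_1$- and $B_1$-pieces. Using $\theta=-\tau$, $\mu=\sinh\sigma$, $\cosh\sigma=\langle\mu\rangle$ and $\log t=\log\tau+\log\langle\sinh\sigma\rangle$, each piece fits the template of Lemma \ref{L:hc1d} with $m=0$ and $n=\mp1$, where $H_{+}(z,\tau)=A_{1}(z)e^{iS_{A}(z)(\log\tau+\log\langle z\rangle)}$ and analogously with $B_{1},S_{B}$ for $H_{-}$. The choice $m=0$ makes $R_{1}\equiv 0$ and $n^{2}=1$ makes $f_{1}\equiv 0$. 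Since $\partial_{\tau}H_{\pm}=(iS/\tau)H_{\pm}$, the surviving $f_{2}$-terms, after restoring the prefactor $t^{-3/2}e^{\mp i\tau}$ and converting back to $(t,x)$-variables, are precisely the two summands of $N_{\mathrm{r}}(u_{\mathrm{ap}})$ as stated---this is what \eqref{E:SASB1D} was engineered for. Hence $(\square+1)u_{\mathrm{ap}}-N_{\mathrm{r}}(u_{\mathrm{ap}})=t^{-5/2}e^{-i\tau}R_{2}^{+}+t^{-5/2}e^{i\tau}R_{2}^{-}$ with $R_{2}^{\pm}=(\langle z\rangle^{2}\tau^{2}\partial_{\tau}^{2}-\langle z\rangle^{4}\partial_{z}^{2}+\tfrac{3}{4})H_{\pm}$.

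For the $L^{2}_{x}$-bound I use the identity $\|v(\sinh\sigma,\tau)\|_{L^{2}_{x}(|x|<t)}=t^{1/2}\|\langle z\rangle^{-3/2}v(z,t/\langle z\rangle)\|_{L^{2}_{z}}$ stated before Lemma \ref{L:hc1d}, which together with the $t^{-5/2}$-prefactor yields the desired $t^{-2}$ out front. The $\tau^{2}\partial_{\tau}^{2}H_{\pm}$-piece expands to $-(iS+S^{2})H_{\pm}$ and is bounded using $\|S_{A,B}\|_{L^\infty}\lesssim (\|A_{1}\|_{L^\infty}+\|B_{1}\|_{L^\infty})^{2}$; the $\tfrac{3}{4}\cdot H_{\pm}$-piece is immediate. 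The delicate contribution is $\langle z\rangle^{4}\partial_{z}^{2}H_{\pm}$: differentiating $H_{\pm}$ twice in $z$ produces expressions in $A_{1},A_{1}',A_{1}''$, $B_{1},B_{1}',B_{1}''$ and $\partial_{z}^{k}S_{A,B}$ for $k=1,2$, accompanied by at most two powers of $(\log\tau+\log\langle z\rangle)$; the worst term, $A_{1}(\partial_{z}S_{A})^{2}(\log\tau)^{2}$, is exactly what produces the $(\log t)^{2}$ factor in \eqref{N2}.

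The main technical obstacle is the bookkeeping in this last expansion. Via \eqref{E:SASB1D}, each $\partial_{z}^{k}S_{A,B}$ unfolds into a polynomial in $A_{1},B_{1}$ and their derivatives of order up to $k$, with $\langle z\rangle^{-j}$-decay factors. The key requirement is that the weight $\langle z\rangle^{-3/2}\cdot\langle z\rangle^{4}=\langle z\rangle^{5/2}$ from the coordinate identity be matched to the highest-derivative factor so that it is controlled by the $H^{2,5/2}$-norm, while every other factor carries at most one derivative of $A_{1}$ or $B_{1}$ and can be placed in $L^\infty$ via the one-dimensional Sobolev embedding $H^{1}(\R)\hookrightarrow L^\infty(\R)$. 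Tracking this carefully across all product terms yields the asserted polynomial bound $(\|A_{1}\|_{H^{2,5/2}}+\|B_{1}\|_{H^{2,5/2}})\Jbr{\|A_{1}\|_{H^{2,5/2}}+\|B_{1}\|_{H^{2,5/2}}}^{4}$ of total degree at most $5$.
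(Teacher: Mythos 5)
Your proposal follows the paper's own proof exactly: apply Lemma~\ref{L:hc1d} with $m=0$ and $n=\mp1$ to the two half-wave pieces of $u_{\mathrm{ap}}$, note that $m=0$ kills $R_1$ and $n^2=1$ kills $f_1$, identify the $f_2$-output with $N_{\mathrm{r}}(u_{\mathrm{ap}})$, and estimate the remaining $R_2$ terms in $L^2_x$ via the support restriction and the change-of-variables identity $\|v(\sinh\sigma,\tau)\|_{L^2_x(|x|<t)}=t^{1/2}\|\langle z\rangle^{-3/2}v(z,t/\langle z\rangle)\|_{L^2_z}$. The paper's own proof stops after writing $(\square+1)u_{\mathrm{ap}}-N_{\mathrm{r}}(u_{\mathrm{ap}})=t^{-5/2}(e^{-i\tau}R_A+e^{i\tau}R_B)$ and invoking that identity, whereas you additionally sketch how the weight $\langle z\rangle^{5/2}$, the worst term $A_1(\partial_z S_A)^2(\log\tau)^2$, and the $H^1\hookrightarrow L^\infty$ Sobolev embedding combine to produce the $(\log t)^2$ factor and the degree-$\le 5$ polynomial bound; this extra bookkeeping is implicit in the paper and your account of it is consistent with the $H^{2,5/2}$ hypothesis.
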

\begin{proof}
By Lemma \ref{L:hc1d} with $n=\mp1$ and $ m =0$,
\[
	(\square+1)u_{\mathrm{ap}}-N_{\mathrm{r}}(u_{\mathrm{ap}})
	=  t^{-\frac52}(e^{-i\tau} R_A(\sinh \sigma,\tau) + e^{i\tau} R_B(\sinh \sigma,\tau)),
\]
where
\[\textstyle
	R_A(z,\tau) = (\Jbr{z}^2 \tau^2 \frac{\partial^2}{\partial \tau^2}  -
	\Jbr{z}^{4} \frac{\partial^2}{\partial z^2} + \frac3{4} )
	(A_1(z) e^{iS_A(z)(\log \tau + \log \Jbr{z})})
\]
and
\[\textstyle
	R_B(z,\tau) = (\Jbr{z}^2 \tau^2 \frac{\partial^2}{\partial \tau^2}  -
	\Jbr{z}^{4} \frac{\partial^2}{\partial z^2} + \frac3{4} )
	(B_1(z) e^{iS_B(z)(\log \tau+\log \Jbr{z})}).
\]
Recall that the support of $u_{\mathrm{ap}}$ is a subset of $\{|x| \le t\}$. Hence,
\begin{align*}
	t^2\norm{(\square+1)u_{\mathrm{ap}}-N_{\mathrm{r}}(u_{\mathrm{ap}})}_{L^2_x}
	\le {}& \textstyle \norm{\Jbr{z}^{-\frac32} R_A\(z , \frac{t}{\Jbr{z}}\)}_{L^2_z} \\
	&\textstyle + \norm{\Jbr{z}^{-\frac32} R_B\(z , \frac{t}{\Jbr{z}}\)}_{L^2_z},
\end{align*}
from which the desired estimate follows.
\end{proof}
\subsection{Cancellation of the non-resonant part}

\begin{proposition}\label{P:vn1d} 
Let $N_{\mathrm{nr}}$ be the non-resonant part of the nonlinearity, that is,
\begin{align*}
N_{\mathrm{nr}}(u_{\mathrm{ap}})=t^{-\frac32}{\bf 1}_{\{|x|<t\}}(t,x)
&(-8A_2(\mu)e^{3i\theta+2iS_{A}(\mu)\log t
-iS_{B}(\mu)\log t}\\
&
-8B_2(\mu)e^{-3i\theta-iS_{A}(\mu)\log t
+2iS_{B}(\mu)\log t}),
\end{align*}
Let $v_{\mathrm{ap}}$ be as in \eqref{E:1dvp}.
Then, we have 
\begin{multline}\label{E:N3}
\|(\Box+1)v_{\mathrm{ap}}-N_{\mathrm{nr}}(u_{\mathrm{ap}})\|_{L_{x}^{2}}\\
\lesssim t^{-2}(\norm{A_1}_{H^{2,\frac52}}+\norm{B_1}_{H^{2,\frac52}})^{3}\Jbr{\norm{A_1}_{H^{2,\frac52}}+\norm{B_1}_{H^{2,\frac52}}}^{4}.
\end{multline}
\end{proposition}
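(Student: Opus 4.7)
The plan is to apply Lemma \ref{L:hc1d} directly to each of the two summands of $v_{\mathrm{ap}}$ with the choice $m=1$ and $n=\mp3$. In hyperbolic coordinates, the first piece of $v_{\mathrm{ap}}$ has the form $\tau^{-3/2}(\cosh\sigma)^{-3/2} e^{-3i\tau} H_+(\sinh\sigma,\tau)$, where
\[
H_+(z,\tau) = A_2(z)\, e^{i\Phi_A(z)(\log\tau + \log\Jbr{z})}, \qquad \Phi_A(z):=2S_A(z)-S_B(z),
\]
and the second piece is analogous with $B_2$ and $\Phi_B := -S_A+2S_B$. With $m=1$, the leading term produced by Lemma \ref{L:hc1d} is $t^{-3/2} e^{in\tau} f_1 = t^{-3/2}(1-n^2) e^{in\tau} H_\pm$. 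Since $n=\mp3$ gives $1-n^2=-8$, undoing the change of variables ($\theta=-\tau$, $\log\tau+\log\Jbr{z}=\log t$, $\mu=\sinh\sigma$) and using the definition \eqref{pn} of $A_2,B_2$, this $f_1$ contribution is precisely $N_{\mathrm{nr}}(u_{\mathrm{ap}})$. The cancellation is therefore exact at leading order, and the proof reduces to bounding the three error terms
\[
t^{-3/2} e^{in\tau} f_2, \quad t^{-5/2} e^{in\tau} R_1, \quad t^{-7/2} e^{in\tau} R_2
\]
in $L^2_x$ by $t^{-2}$ times the stated combination of Sobolev norms.

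For the $L^2_x$ estimates I will use the change of variables identity $\|V(\sinh\sigma,\tau)\|_{L^2_x(|x|<t)} = t^{1/2}\|\Jbr{z}^{-3/2}V(z,t/\Jbr{z})\|_{L^2_z}$ from the proof of Proposition \ref{P:uapNr1D}. The key observation is that $H_\pm$ depends on $\tau$ only through $\log\tau$ in the exponent, so $\partial_\tau H_\pm = (i\Phi_{A,B}/\tau)H_\pm$ and $\tau^2\partial_\tau^2 H_\pm$ is bounded (in $\tau$) by a polynomial in $\Phi_{A,B}$. Consequently $f_2$ carries an extra factor $1/\tau = \Jbr{z}/t$ compared to $f_1$, which is precisely the gain needed to pass from $O(t^{-1})$ to $O(t^{-2})$ in $L^2_x$. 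The $R_1$ term is multiplied by the stronger prefactor $t^{-5/2}$; combined with the $t^{1/2}$ from the change of variables it also produces $O(t^{-2})$.

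The main technical obstacle is the $R_2$ term, where the two $z$-derivatives hit the exponential and produce factors like $(\partial_z \Phi_{A,B})^2 (\log\tau + \log\Jbr{z})^2$, so $R_2$ contains $(\log t)^2$ growth and weighted second derivatives of $A_2, B_2, S_A, S_B$. The prefactor $t^{-7/2}$ and the Jacobian $t^{1/2}$ yield $t^{-3}(\log t)^2 \lesssim t^{-2}$, so the log loss is absorbed. For the remaining weighted norm, I will expand $A_2=-\tfrac{\lambda}{8}A_1^2\overline{B_1}$ (and similarly $B_2$) and estimate each resulting product by placing one factor in $L^\infty$ via the Sobolev embedding $H^1(\R)\hookrightarrow L^\infty(\R)$ and the remaining factors in weighted $L^2$ using $A_1,B_1\in H^{2,5/2}$; since $S_A$, $S_B$ and their derivatives are controlled pointwise by $\Jbr{z}^{-1}(|A_1|^2+|B_1|^2)$ and their derivatives, this yields the cubic factor $(\|A_1\|_{H^{2,5/2}}+\|B_1\|_{H^{2,5/2}})^3$, while the additional powers arising from squared $\Phi_{A,B}$ and its derivatives produce at most $\Jbr{\|A_1\|_{H^{2,5/2}}+\|B_1\|_{H^{2,5/2}}}^4$.

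Collecting the three contributions and choosing the largest loss (from $R_2$ after the logarithm is swallowed by a factor of $t$) gives the stated bound. The careful bookkeeping of weights---in particular ensuring that the weight $\Jbr{z}^{5/2}$ that the norm $H^{2,5/2}$ provides is sufficient to absorb all $\Jbr{z}$-powers appearing in $R_2$ after the $\Jbr{z}^{-3/2}$ Jacobian factor---is the main technical step requiring attention, but it is a straightforward extension of the computation already carried out in Proposition \ref{P:uapNr1D}.
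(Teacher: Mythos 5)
Your proposal follows the paper's own proof essentially verbatim: decompose $v_{\mathrm{ap}}=v_{-3}+v_3$, apply Lemma~\ref{L:hc1d} with $m=1$ and $n=\mp3$ so that $f_1=(1-n^2)H_\pm=-8H_\pm$ reproduces exactly $N_{\mathrm{nr}}(u_{\mathrm{ap}})$ after returning to $(t,x)$ variables, and then estimate $f_2$, $R_1$, $R_2$ in $L^2_x$ using the change-of-variables identity $\|V(\sinh\sigma,\tau)\|_{L^2_x(|x|<t)}=t^{1/2}\|\Jbr{z}^{-3/2}V(z,t/\Jbr{z})\|_{L^2_z}$. Your power counting matches the paper's ($f_2$ gains a $\tau^{-1}=\Jbr{z}/t$ from $\partial_\tau$ acting on the $\log\tau$-only dependence of $H_\pm$; $R_1$ carries the $t^{-5/2}$ prefactor; $R_2$ gives $t^{-3}(\log t)^2\lesssim t^{-2}$), so the argument is correct and is the same as the paper's.
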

\begin{proof}
We decompose $v_{\mathrm{ap}}=v_{-3}+ v_{3}$ with
\begin{align*}
	v_{-3}(t,x) ={}& \tau^{-\frac32}(\cosh \sigma)^{-\frac32} e^{-3i\tau} [A_2(z) e^{i(2S_A(z)-S_B(z))(\log \tau + \log \Jbr{z})}]_{|z=\sinh \sigma},\\
	v_{3}(t,x) ={}& \tau^{-\frac32}(\cosh \sigma)^{-\frac32} e^{3i\tau} [B_2(z) e^{i(-S_A(z)+2S_B(z))(\log \tau + \log \Jbr{z})}]_{|z=\sinh \sigma}.
\end{align*}
We first estimate $v_{-3}$. Apply Lemma \ref{L:hc1d} with $n=-3$ and $m=1$ to obtain
\begin{align*}
	[(\square + 1) v_{-3}](t,x) ={}& t^{-\frac32} e^{-3i\tau}f_1(\sinh \sigma,\tau) + t^{-\frac32} e^{-3i\tau}f_2(\sinh \sigma,\tau)\\
	&{}+ t^{-\frac52  } e^{-3i\tau}R_1(\sinh \sigma,\tau) + t^{-\frac72 }e^{-3i\tau} R_2(\sinh \sigma,\tau),
\end{align*}
where
\begin{align*}
	f_1(z,\tau) &{}= -8 A_2(z) e^{i(2S_A(z)-S_B(z))(\log \tau + \log \Jbr{z})},\\
	f_2(z,\tau) &{}= -6it^{-1} \Jbr{z}(2S_A-S_B)A_2(z) e^{i(2S_A(z)-S_B(z))(\log \tau + \log \Jbr{z})},\\
	R_1(z,\tau) &{}=  6i\Jbr{z}A_2(z) e^{i(2S_A(z)-S_B(z))(\log \tau + \log \Jbr{z})}\\
	 &\quad  +2 t^{-1} 
	\Jbr{z}^2 A_2(z) e^{i(2S_A(z)-S_B(z))(\log \tau + \log \Jbr{z})}\\
	 &\quad -2t^{-1} \Jbr{z}^2(2S_A-S_B)A_2(z) e^{i(2S_A(z)-S_B(z))(\log \tau + \log \Jbr{z})},\\
	R_2(z,\tau) &{}= \textstyle \(\Jbr{z}^2 \tau^2 \frac{\partial^2}{\partial \tau^2}  -
	\Jbr{z}^{5 } \frac{\partial^2}{\partial z^2}\Jbr{z}^{- 1 } + \frac3{4} \)A_2(z) e^{i(2S_A(z)-S_B(z))(\log \tau + \log \Jbr{z})}.
\end{align*}
Note that $f_1$ cancels the first half of $N_{\mathrm{nr}}(u_{\mathrm{ap}})$.
The other terms are the remainder.
We have
\[
	\norm{f_2(\sinh \sigma,\tau)}_{L^2_x}
	\lesssim t^{-\frac12}(\norm{A_1}_{L^{10}}+\norm{B_1}_{L^{10}})^5,
\]
\begin{multline*}
	\norm{R_1(\sinh \sigma,\tau)}_{L^2_x} \lesssim t^{\frac12}(\norm{A_1}_{L^6}+\norm{B_1}_{L^6})^3 \\
	+ t^{-\frac12}(\norm{A_1}_{H^{1,\frac12}}+\norm{B_1}_{H^{1,\frac12}})^3\Jbr{\norm{A_1}_{L^\I}+\norm{B_1}_{L^\I}}^2,
\end{multline*}
and
\begin{multline*}
	\norm{R_2(\sinh \sigma,\tau)}_{L^2_x} \\
	\lesssim t^{\frac12}(\log t)^2(\norm{A_1}_{H^{2,\frac52}}+\norm{B_1}_{H^{2,\frac52}})^3\Jbr{\norm{A_1}_{H^{2,\frac52}}+\norm{B_1}_{H^{2,\frac52}}}^4.
\end{multline*}
Similar estimates hold for $v_3$. Hence, we obtain the desired estimate.
\end{proof}
\subsection{Proof of Proposition \ref{P:ua}}

Let us now complete the proof of the proposition.
\begin{proof}[Proof of Proposition \ref{P:ua}]
The first estimate \eqref{eq:ua1} is immediate by definitions of $u_{\mathrm{ap}}$ and $v_{\mathrm{ap}}$.
Let us prove \eqref{eq:ua2}. By the triangle inequality,
\begin{multline*}
	\norm{(\square +1)\widetilde{u} - N(\widetilde{u})}_{L^2_x}
	\le \norm{(\square +1)u_{\mathrm{ap}} - N_{\mathrm{r}}(u_{\mathrm{ap}})}_{L^2_x}
	\\+ \norm{(\square +1)v_{\mathrm{ap}} - N_{\mathrm{nr}}(u_{\mathrm{ap}})}_{L^2_x}
	+\norm{N(u_{\mathrm{ap}})-N(u_{\mathrm{ap}}+v_{\mathrm{ap}})}_{L^2_x}.
\end{multline*}
The first two terms of the right hand side are handled by Propositions \ref{P:uapNr1D} and \ref{P:vn1d}.
Finally, 
\begin{align*}
	\norm{N(u_{\mathrm{ap}})(t)-N(u_{\mathrm{ap}}+v_{\mathrm{ap}})(t)}_{L^2_x}
	&{}\lesssim (\norm{u_{\mathrm{ap}}(t)}_{L^\I}+ \norm{v_{\mathrm{ap}}(t)}_{L^\I})^2\norm{v_{\mathrm{ap}}(t)}_{L^2}\\
	&{}\lesssim t^{-2} \norm{A_1}_{L^2}\Jbr{\norm{A_1}_{L^\I} + \norm{B_1}_{L^\I}}^8.
\end{align*}
Combining the above estimates, we complete the proof of \eqref{eq:ua2}.

Suppose that $ \{\xi \in \R \ |\ |A_1(\xi)|\neq |B_1(\xi)|\}$ has positive measure.
One sees from \eqref{E:SASB1D} that the set is equal to $ \{\xi \in \R \ |\ S_A(\xi)+S_B(\xi) \neq 0\}$.
Since $A_1$ and $B_1$ belong to $C^1(\R) \cap L^2(\R)$, there exists a compact set $\Omega\subset \{\xi \in \R \ |\ |A_1(\xi)|\neq |B_1(\xi)|\}$
such that $\inf_{\xi \in \Omega}|\partial_\xi ( S_A(\xi) + S_B(\xi))| > 0$ and $|\Omega|>0$. 
Indeed, if such $\Omega$ does not exist then $S_A(\xi)+S_B(\xi)$ is a nonzero constant.
Hence, $|A_1(z)|^2-|B_1(z)|^2=C\Jbr{z}$. However, this contradicts with $A_1, B_1 \in L^2(\R)$.

For this $\Omega$, we have
\begin{align*}
	&\norm{\Im u_{\mathrm{ap}}(t)}_{L^2_x}^2 \\
	&{}\textstyle = \norm{\Im(A_1(z) e^{-i\Jbr{z}^{-1}t + iS_A(z)\log t}+ B_1(z) e^{i\Jbr{z}^{-1}t + iS_B(z)\log t})}_{L^2_z(\R)}^2 \\
	&{}\textstyle \ge \norm{\Im(A_1(z) e^{-i\Jbr{z}^{-1}t + iS_A(z)\log t}+ B_1(z) e^{i\Jbr{z}^{-1}t + iS_B(z)\log t})}_{L^2_z({\Omega})}^2\\
	&{}= \tfrac12 \norm{ A_1 }_{L^2(\Omega)}^2 + \tfrac12 \norm{ B_1 }_{L^2(\Omega)}^2 + o(1)
\end{align*}
as $t\to \I$,
where the small order term consists of cross terms such as
\[
	\int_\Omega A_1 (z) B_1(z) e^{i(S_A(z)+S_B(z))\log t}dz.
\] 
This term is small by using $\inf_{\xi \in \Omega}|\partial_\xi ( S_A(\xi) + S_B(\xi))| > 0$.
The all other cross terms are also small by a similar reason.
Moreover, $\norm{v_{\mathrm{ap}}(t)}_{L^2_x} = O(t^{-1})$
as $t\to\I$. Thus, \eqref{E:condc1d} holds. We complete the proof.
\end{proof}

\section{Two dimensional case}\label{S:2d}

\subsection{Choice of asymptotic profile}
Let us move to the two dimensional case.
By the same reason as in the one dimensional case, the easy choice $\widetilde{u}=u_{\mathrm{ap}}$
does not satisfy \eqref{eq:A2}.
We put a higher order correction, $v_{\mathrm{ap}}$, to cancel out the non-resonant part of the nonlinearity.
In the two dimensional case, the non-resonant part takes a complicated form.
The higher correction terms are defined
by utilizing the Fourier series expansion of the nonlinearity which we observed in Section \ref{S:observation}.

For second asymptotic profile, we let
\begin{align}
A_n(\mu) &=  \tfrac{\l}{1-(2n-1)^2} L_{n-1}(\zeta(\mu))|A_1(\mu)|A_1(\mu)e^{i(n-1)\alpha(\mu)}, \\
B_n(\mu) &=  \tfrac{\l}{1-(2n-1)^2} L_{n-1}(1/\zeta(\mu))|B_1(\mu)|B_1(\mu)e^{-i(n-1)\alpha(\mu)},
\end{align}
for $n\ge2$ and define 
\begin{equation}\label{E:vap2d}
\begin{aligned}
v_{\mathrm{ap}}(t,x) 
&{}= t^{-2}{\bf 1}_{\{|x| < t\}}(t,x)
\sum_{n=2}^\infty\bigg\{
A_n(\mu)e^{i(2n-1)\theta + inS_{A}(\mu)\log t - i(n-1)S_{B}(\mu)\log t}\\
&\qquad \qquad \qquad \qquad  + B_n(\mu)e^{-i(2n-1)\theta - i(n-1)S_{A}(\mu)\log t + inS_{B}(\mu)\log t}
\bigg\}.
\end{aligned}
\end{equation}

With this choice, one has the following.

\begin{proposition} \label{P:ua2D} 
Assume that $\phi_{0},\phi_{1}$ satisfy Assumption \ref{A:2D}. 
Let $v_{\mathrm{ap}}(t,x)$ be defined by \eqref{E:vap2d} and $\widetilde{u}:=u_{\mathrm{ap}}+v_{\mathrm{ap}}$.
Then,  
\begin{equation}
\|\widetilde{u}(t)\|_{L^{\infty}_x} \lesssim_{\rho_0} t^{-1}
(\norm{A_1}_{L^\I}+\norm{B_1}_{L^\I})\Jbr{\norm{A_1}_{L^\I}+\norm{B_1}_{L^\I}}^2,
\label{eq:ua12d}
\end{equation}
\begin{multline}
\|(\Box+1)\widetilde{u}(t) -N(\widetilde{u}(t))\|_{L^2_x} \\
\lesssim_{\rho_0} t^{-2}(\log t)^2(\norm{A_1}_{H^{2,2}}+\norm{B_1}_{H^{2,2}})\Jbr{\norm{A_1}_{H^{2,2}}+\norm{B_1}_{H^{2,2}}}^3
\label{eq:ua22d}
\end{multline}
hold for $t\ge 3$, where $\rho_0$ is the constant given in Assumption \ref{A:2D}.
In particular, $\widetilde{u}(t)$ satisfies \eqref{eq:A3} and \eqref{eq:A4} for any $\delta_0<1$ if $A_1,B_1\in H^{2,2}$ are
small in $L^\I(\R^2)$ compared with $\rho_0$ and if $T$ is large.
Furthermore, if the set $\{ \xi \in \R^2 \ |\ |A_1(\xi)|\neq |B_1(\xi)| \}$ has positive measure then $\widetilde{u}(t)$ satisfies \eqref{E:condc2d}.
\end{proposition}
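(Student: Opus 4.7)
The strategy is to mirror the one-dimensional argument of Proposition~\ref{P:ua}. The essential new feature is that, since $N(u)=\lambda|u|u$ is not polynomial, the Fourier expansion~\eqref{E:Fourier2D} of $N(u_{\mathrm{ap}})$ in the angular variable $\Theta=2\theta+\alpha$ contains infinitely many non-resonant modes, carrying phase $e^{\pm i(2n-1)\theta}$ for $n\ge 2$. The profile $u_{\mathrm{ap}}$ is chosen so that $(\Box+1)u_{\mathrm{ap}}$ cancels the resonant part (the $n=0,1$ modes in the expansion of Section~\ref{S:observation}), and each summand of $v_{\mathrm{ap}}$ is designed to cancel exactly one non-resonant mode after an application of $(\Box+1)$.

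First I would pass to hyperbolic coordinates $t=\tau\cosh\sigma$, $x=\tau\sinh\sigma\cdot\omega$ with $\omega\in S^1$, and derive a two-dimensional analog of Lemma~\ref{L:hc1d}: applied to an ansatz of the form $\tau^{-1-m}(\cosh\sigma)^{-1-m}e^{iN\tau}H(\mu,\tau)$, the operator $(\Box_{t,x}+1)$ yields a leading term proportional to $(1-N^2)H$ together with explicit remainders of size $t^{-2-m}$ and $t^{-3-m}$ in $L^2_x$. Applied with $m=0$, $N=\pm1$ to $u_{\mathrm{ap}}$, and using the cancellation computed in Section~\ref{S:observation}, one obtains the 2D analog of Proposition~\ref{P:uapNr1D}, namely $\|(\Box+1)u_{\mathrm{ap}}-N_{\mathrm{r}}(u_{\mathrm{ap}})\|_{L^2_x}\lesssim t^{-2}(\log t)^2$, where $N_{\mathrm{r}}(u_{\mathrm{ap}})$ denotes the $n=0,1$ modes together with the phase corrections $S_A$, $S_B$ of~\eqref{E:SASB2D}. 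Next, applied with $m=1$ and $N=\pm(2n-1)$ to each summand of $v_{\mathrm{ap}}$, the factor $1-(2n-1)^2$ in the leading term is absorbed by the normalization $1/[1-(2n-1)^2]$ in $A_n$, $B_n$, and summation in $n\ge 2$ reproduces exactly $N_{\mathrm{nr}}(u_{\mathrm{ap}})$ up to remainders of size $t^{-3}$ per mode. The residual nonlinear difference is bounded by $\|N(\widetilde{u})-N(u_{\mathrm{ap}})\|_{L^2_x}\lesssim(\|u_{\mathrm{ap}}\|_{L^\infty_x}+\|v_{\mathrm{ap}}\|_{L^\infty_x})\|v_{\mathrm{ap}}\|_{L^2_x}=O(t^{-3})$, and a triangle inequality finishes~\eqref{eq:ua22d}.

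The principal obstacle is summability. The $n$-th summand of $v_{\mathrm{ap}}$ has size $|A_n|\lesssim \Jbr{n}^{-3}/\Jbr{n}^2\lesssim\Jbr{n}^{-5}$ by Proposition~\ref{P:Lnest}, so the series converges uniformly. However, the $L^2_x$ remainders in the 2D version of Lemma~\ref{L:hc1d} involve $\partial_\mu^2$ acting on $A_n$, producing contributions such as $L_{n-1}''(\zeta)(\partial_\mu\zeta)^2$, $(n-1)^2L_{n-1}(\zeta)(\partial_\mu\alpha)^2$, and $[n\,\partial_\mu S_A-(n-1)\partial_\mu S_B]^2(\log t)^2L_{n-1}(\zeta)$; by Proposition~\ref{P:Lnest}, and after multiplication by the $1/\Jbr{n}^2$ from the normalizing factor, each of these is $O(\Jbr{n}^{-3})$ or $O(\Jbr{n}^{-3}(\log t)^2)$, so all are summable. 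The $(\log t)^2$ factor in~\eqref{eq:ua22d} arises from differentiating the logarithmic phase twice in $\tau$. The uniform-in-$\zeta$ nature of Proposition~\ref{P:Lnest} is indispensable here, since $\zeta(\mu)$ may cross the single point $\zeta=1$ at which $L_n$ loses smoothness.

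Finally, the $L^\infty$ bound~\eqref{eq:ua12d} is immediate from the summable estimate $|A_n|+|B_n|\lesssim\Jbr{n}^{-5}(\|A_1\|_{L^\infty}+\|B_1\|_{L^\infty})^2$. The complex-valuedness statement~\eqref{E:condc2d} is proved exactly as in one dimension: by Assumption~\ref{A:2D} the set $\{|A_1|\neq|B_1|\}$ coincides with $\{S_A+S_B\neq 0\}$, and one selects a compact subset $\Omega$ of positive measure on which $\nabla_\mu(S_A+S_B)\neq 0$; a non-stationary phase argument in two dimensions then forces the cross terms in $\|\Im u_{\mathrm{ap}}(t)\|_{L^2(\Omega)}^2$ to vanish as $t\to\infty$, leaving a positive lower bound, while $\|v_{\mathrm{ap}}(t)\|_{L^2_x}=O(t^{-1})$ is asymptotically negligible.
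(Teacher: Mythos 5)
Your overall architecture matches the paper's: a two‑dimensional analog of Lemma~\ref{L:hc1d}, the split of $N(u_{\mathrm{ap}})$ into resonant and non‑resonant parts via the Fourier expansion~\eqref{E:Fourier2D}, cancellation of the resonant part by $(\Box+1)u_{\mathrm{ap}}$ and of the non‑resonant modes by $(\Box+1)v_{\mathrm{ap}}$ term‑by‑term, summability via Proposition~\ref{P:Lnest}, and a triangle inequality for $N(\widetilde u)-N(u_{\mathrm{ap}})$. Two points need correction, one cosmetic and one genuine.

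The cosmetic one: you claim $\|N(\widetilde u)-N(u_{\mathrm{ap}})\|_{L^2_x}=O(t^{-3})$. Since $\|u_{\mathrm{ap}}\|_{L^\infty}\sim t^{-1}$ while $\|v_{\mathrm{ap}}\|_{L^2}\sim t^{-1}$ (the $t^{-2}$ prefactor is offset by the factor $t$ coming from the change of variables $\|\Jbr{z}^{-2}v(z,t/\Jbr z)\|_{L^2_z}$ in \eqref{E:intchange2D}), the bound is actually $O(t^{-2})$. This does not damage the conclusion, since it is still dominated by the $t^{-2}(\log t)^2$ from the resonant remainder, and indeed you quote $\|v_{\mathrm{ap}}\|_{L^2}=O(t^{-1})$ correctly in your final paragraph.

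The genuine gap is in the complex‑valuedness argument. You assert that ``by Assumption~\ref{A:2D} the set $\{|A_1|\neq|B_1|\}$ coincides with $\{S_A+S_B\neq 0\}$,'' invoking it ``exactly as in one dimension.'' In one dimension this is a one‑line algebraic identity: $S_A+S_B=\tfrac{\lambda}{2}\Jbr z^{-1}(|A_1|^2-|B_1|^2)$. In two dimensions, however, $S_A+S_B=-\tfrac{\lambda}{2}\Jbr z^{-1}|A_1|\bigl(L_0(\zeta)-\zeta L_0(1/\zeta)\bigr)$, and the claimed equivalence is precisely the statement that $L_0(\zeta)-\zeta L_0(1/\zeta)=0$ if and only if $\zeta=1$. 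This is neither part of Assumption~\ref{A:2D} nor automatic from gauge invariance; it requires an explicit computation with the integral representation of $L_0$, namely
\[
L_0(\zeta)-\zeta L_0(\tfrac1\zeta)
=\tfrac{4(\zeta^2-1)}{\pi}\int_0^{\pi/2}\bigl(\sqrt{1+\zeta^2+2\zeta\cos\theta}+\sqrt{1+\zeta^2-2\zeta\cos\theta}\bigr)^{-1}\cos^2\theta\, d\theta,
\]
whose sign matches that of $\zeta^2-1$. Without this observation the step from ``$\{|A_1|\neq|B_1|\}$ has positive measure'' to ``$\{S_A+S_B\neq0\}$ has positive measure'' is unjustified, and the rest of the non‑stationary phase argument has nothing to work with.
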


The key ingredient is the decomposition of $N(u_{\mathrm{ap}})$.
One deduces from \eqref{E:Fourier2D} and \eqref{E:reflection2D} that
\begin{equation}\label{E:resnonres2d}
\begin{aligned}
	&N(u_{\mathrm{ap}}) \\
	&=\l t^{-2}{\bf 1}_{\{ |x|<t \} } \bigg[ |A_1|A_1 L_0(\zeta)  e^{i\theta} e^{iS_A\log t}
	+ |B_1|B_1 L_0(1/\zeta)  e^{-i\theta} e^{iS_B\log t}\\
	&\quad{}+ \sum_{n \ge 1} |A_1|A_1 L_n(\zeta) e^{iS_A\log t}e^{in(\alpha+(S_A-S_B)\log t)} e^{i(2n+1)\theta}\\
	&\quad{}+ \sum_{n\le -2} |B_1|B_1 L_{-n-1}(1/\zeta) e^{iS_B\log t} e^{i(n+1)(\alpha+(S_A-S_B)\log t)} e^{i(2n+1)\theta}\bigg]\\
	&=t^{-2}{\bf 1}_{\{ |x|<t \} } \big[ -2 \Jbr{\mu} S_A A_1  e^{i\theta} e^{iS_A\log t}
	+ 2 \Jbr{\mu} S_B B_1  e^{-i\theta} e^{iS_B\log t}\big]\\
	&\quad{}+t^{-2}{\bf 1}_{\{ |x|<t \} } \sum_{n\ge 2}\bigg[ (1-(2n-1)^2)A_n e^{iS_A\log t} e^{i(n-1)(S_A-S_B)\log t} e^{i(2n-1)\theta}\\	
	&\qquad\qquad\qquad\quad{}+(1-(2n-1)^2)B_n e^{iS_B\log t}e^{i(n-1)(S_B-S_A)\log t} e^{-i(2n-1)\theta}\bigg]\\
	&=: N_{\mathrm{r}}(u_{\mathrm{ap}}) + N_{\mathrm{nr}}(u_{\mathrm{ap}}).
\end{aligned}
\end{equation}

\subsection{Hyperbolic coordinate}
To prove Proposition \ref{P:ua2D}, we introduce the hyperbolic coordinate $(\tau, \sigma ,\omega) \in \R \times \R_{\ge0} \times \mathbb{T}$
by
\[
	t = \tau \cosh \sigma, \quad x_1 = \tau \sinh \sigma \cos \omega,\quad
	x_2 = \tau \sinh \sigma \sin \omega.
\]
for $|x|<t$ and $t\ge1$. We have explicit definition:
\[\textstyle
	\tau = \sqrt{t^2 -x_1^2 - x_2^2},\quad \sigma = \tanh^{-1}\frac{\sqrt{x_1^2+x_2^2}}{t},\quad
	\omega = \arg (x_1+ix_2).
\]
The range of the new variables is $\{ (\tau, \sigma ,\omega) \in \R \times \R_{\ge0} \times \mathbb{T}\ |\ \tau \ge (\cosh \sigma)^{-1} \}$.
Remark that $\theta (t,x)=-\tau$ and $\mu=(\mu_1,\mu_2)=(\sinh\sigma \cos \omega , \sinh \sigma \sin \omega)$.
Moreover, 
\begin{equation}\label{E:intchange2D} \textstyle
	\norm{v(\sinh \sigma \cos \omega, \sinh \sigma \sin \omega , \tau)}_{L^2_x(|x|<t)}
	= t \norm{\Jbr{z}^{-2} v\(z_1,z_2, \frac{t}{\Jbr{z}} \)}_{L^2_z(\R^2)}
\end{equation}
for each fixed $t\ge 1$.

One sees that
\[\textstyle
	\square_{t,x} + 1 = \partial_\tau^2 +1
	+\frac2\tau  \partial_\tau - \frac1{\tau^2}  \partial_\sigma^2
	  - \frac{1}{\tau^2 \tanh \sigma} \partial_\sigma
	 - \frac{1}{\tau^2 (\sinh \sigma)^2}\partial_\omega^2.
\]
A computation shows the following.
\begin{lemma}\label{L:hc2D}
Let $n, m  \in \R$.
If a function $v(t,x_1,x_2)$ is of the form
\[
	v(t,x_1,x_2) = \tau^{-1- m } (\cosh \sigma)^{-1- m } e^{in \tau } H(\sinh \sigma \cos \omega, \sinh \sigma \sin \omega ,\tau)
\]
in the hyperbolic coordinate with some function $H(\cdot,\cdot,\cdot)$. Then, we have
\begin{align*}
	(\square_{t,x} +1)v ={}& t^{-2} e^{in\tau} \sum_{j=1,2} f_j(\sinh \sigma \cos \omega , \sinh \sigma \sin \omega, \tau)\\
	&{}+t^{-2-m} e^{in\tau} R_1(\sinh \sigma \cos \omega , \sinh \sigma \sin \omega, \tau)\\
	&{}+ t^{-3-m} e^{in\tau} R_2(\sinh \sigma \cos \omega , \sinh \sigma \sin \omega, \tau)
\end{align*}
where
\begin{align*}
	f_1(z_1,z_2,\tau)&{}= (1-n^2) \tau^{1-m} \Jbr{z}^{1-m} H(z_1,z_2, \tau),\\
	f_2(z_1,z_2,\tau)&{}=\textstyle 2in \tau^{1-m} \Jbr{z}^{1-m} \frac{\partial}{\partial \tau} H(z_1,z_2,\tau),\\
	R_1(z_1,z_2,\tau)&{}\textstyle= m\Jbr{z} \( -2in - 2\frac{\partial}{\partial \tau} + \frac{m+1}{\tau} \) H(z_1,z_2,\tau),\\
	R_2(z_1,z_2,\tau)&{}\textstyle= \Jbr{z}^2 \tau^2 \frac{\partial^2}{\partial \tau^2} H(z_1,z_2,\tau)
	 - \Jbr{z}^{2+m}\Delta_z (\Jbr{z}^{-m}H) (z_1,z_2,\tau)\\
	&\textstyle\quad - \Jbr{z}^{2+m} \begin{pmatrix}z_1 \\ z_2 \end{pmatrix}^t \nabla_z^2 (\Jbr{z}^{-m}H(z_1,z_2,\tau))\begin{pmatrix}z_1 \\ z_2 \end{pmatrix} +2 H (z_1,z_2,\tau).
\end{align*}
Here, $\Delta_z=\partial_{z_1}^2 + \partial_{z_2}^2$ and $\nabla^2_z A$ denotes the Hessian matrix $(\partial_{z_i}\partial_{z_j}A)_{i,j}$.
\end{lemma}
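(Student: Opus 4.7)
The plan is a direct computation in hyperbolic coordinates $(\tau,\sigma,\omega)$, using the displayed formula for $\square_{t,x}+1$ together with the product structure $v = G(\tau,\sigma)e^{in\tau}\mathcal{H}(\sigma,\omega,\tau)$, where $G(\tau,\sigma) = \tau^{-1-m}(\cosh\sigma)^{-1-m}$ and $\mathcal{H}(\sigma,\omega,\tau) = H(\sinh\sigma\cos\omega,\sinh\sigma\sin\omega,\tau)$. The elementary identities $G = t^{-1-m}$, $\partial_\tau G = -\frac{1+m}{\tau}G$, $\partial_\sigma G = -(1+m)\tanh\sigma\,G$, $\cosh\sigma = \Jbr{z}$, and $\sinh\sigma = |z|$ (with $z = (\sinh\sigma\cos\omega,\sinh\sigma\sin\omega)$) will do all the algebraic work.

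First, I would apply the temporal operator $\partial_\tau^2 + \frac{2}{\tau}\partial_\tau + 1$ via the product rule. Since $\partial_\tau$ acts on $e^{in\tau}$ as multiplication by $in$, and on $G$ via the logarithmic derivative above, all purely $\tau$-independent cancellations happen mechanically and yield
\begin{align*}
\bigl(\partial_\tau^2 + \tfrac{2}{\tau}\partial_\tau + 1\bigr)v = e^{in\tau} G\Bigl[&(1-n^2)H + 2in\,\partial_\tau H + \partial_\tau^2 H \\
&{}+ \tfrac{m(1+m)}{\tau^2}H - \tfrac{2inm}{\tau}H - \tfrac{2m}{\tau}\partial_\tau H\Bigr].
\end{align*}

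Second, I would compute the spatial part $-\frac{1}{\tau^2}\partial_\sigma^2 v - \frac{1}{\tau^2\tanh\sigma}\partial_\sigma v - \frac{1}{\tau^2(\sinh\sigma)^2}\partial_\omega^2 v$. The crucial chain-rule identity is
\[
\partial_\sigma^2 \mathcal{H} + \tfrac{1}{\tanh\sigma}\partial_\sigma \mathcal{H} + \tfrac{1}{(\sinh\sigma)^2}\partial_\omega^2 \mathcal{H} = \Delta_z H + z^t\nabla_z^2 H\,z + 2(z\cdot\nabla_z H),
\]
which I would derive by introducing radial coordinates $z = (r\cos\omega,r\sin\omega)$ with $r = \sinh\sigma$, observing $\partial_\sigma = \cosh\sigma\,\partial_r$, and using $\cosh^2\sigma = 1+|z|^2 = \Jbr{z}^2$. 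Combined with the trigonometric simplification $(1+m)(\cosh\sigma)^{-2} + (1+m) - (1+m)^2\tanh^2\sigma = (2(1+m)-m(1+m)|z|^2)/\Jbr{z}^2$ arising from $\partial_\sigma G, \partial_\sigma^2 G$, the spatial part collapses to
\[
\tfrac{e^{in\tau}G}{\tau^2}\Bigl[\tfrac{2(1+m) - m(1+m)|z|^2}{\Jbr{z}^2}H + 2m(z\cdot\nabla_z H) - \Delta_z H - z^t\nabla_z^2 H\,z\Bigr].
\]

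Finally, I would regroup by powers of $t^{-1}$, noting $G/\tau = t^{-2-m}\Jbr{z}^{-m}$, $G/\tau^2 = t^{-3-m}\Jbr{z}^{-1-m}$, and verifying the product-rule expansion
\begin{align*}
\Jbr{z}^{2+m}\bigl[\Delta_z(\Jbr{z}^{-m}H)& + z^t\nabla_z^2(\Jbr{z}^{-m}H)\,z\bigr] \\
={}& \Jbr{z}^2\Delta_z H + \Jbr{z}^2 z^t\nabla_z^2 H\,z - 2m\Jbr{z}^2(z\cdot\nabla_z H) + [-2m + m(m+1)|z|^2]H,
\end{align*}
which lets me recognize the constant-in-$\tau$ spatial coefficient above as $-2H$ plus the negative of the left-hand side. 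The $(1-n^2)H$ and $2in\partial_\tau H$ terms then reassemble into $t^{-2}(f_1+f_2)$; the $m$-homogeneous terms from the temporal step become $t^{-2-m}R_1$; and the $\partial_\tau^2 H$ contribution, Laplacian/Hessian pieces, and the residual $+2H$ produced by the identity above combine into $t^{-3-m}R_2$. The main obstacle is the bookkeeping in the second step—keeping track of cross terms between $\partial_\sigma G$ and $\partial_\sigma\mathcal{H}$ and performing the trigonometric reductions—after which the regrouping is routine.
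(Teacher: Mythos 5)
Your computation is correct and proceeds exactly as the paper's (omitted) proof must: apply the temporal operator $\partial_\tau^2+\frac{2}{\tau}\partial_\tau+1$ by the product rule using the logarithmic derivatives of $G$, use the chain-rule identity
\[
\partial_\sigma^2 \mathcal H + \tfrac1{\tanh\sigma}\partial_\sigma\mathcal H + \tfrac1{\sinh^2\sigma}\partial_\omega^2\mathcal H = \Delta_z H + z^t\nabla^2_z H\,z + 2\,z\cdot\nabla_z H
\]
for the spatial part, and then regroup by powers of $t$. I checked the trigonometric reduction $(1+m)\Jbr{z}^{-2}+(1+m)-(1+m)^2|z|^2\Jbr{z}^{-2}=\Jbr{z}^{-2}\bigl(2(1+m)-m(1+m)|z|^2\bigr)$, the product-rule identity $\Jbr{z}^{2+m}\bigl[\Delta_z(\Jbr{z}^{-m}H)+z^t\nabla^2_z(\Jbr{z}^{-m}H)z\bigr]=\Jbr{z}^2\Delta_z H+\Jbr{z}^2 z^t\nabla^2_z H\,z-2m\Jbr{z}^2(z\cdot\nabla_z H)+(-2m+m(m+1)|z|^2)H$, and the final regrouping into $f_1,f_2,R_1,R_2$; all are correct.

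One small slip in the bookkeeping paragraph: since $G=t^{-1-m}$ and $\tau=t/\Jbr{z}$, the correct identities are $G/\tau=t^{-2-m}\Jbr{z}$ and $G/\tau^2=t^{-3-m}\Jbr{z}^2$, not $t^{-2-m}\Jbr{z}^{-m}$ and $t^{-3-m}\Jbr{z}^{-1-m}$ as written. With the corrected factors the $\frac{m}{\tau}$-homogeneous temporal terms indeed assemble to $t^{-2-m}m\Jbr{z}(-2in-2\partial_\tau+\frac{m+1}{\tau})H$, matching $R_1$. This is a typo in the exposition rather than an error in the argument, since the regrouping you actually describe is consistent with the stated $R_1$ and $R_2$.
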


\subsection{Cancellation of the resonant part}

\begin{proposition}\label{P:uapNr2D} 
Let $N_{\mathrm{r}}(u_{\mathrm{ap}})$ be the resonant part given in \eqref{E:resnonres2d}.
Then, we have
\begin{multline*}
\|(\square+1)u_{\mathrm{ap}}-N_{\mathrm{r}}(u_{\mathrm{ap}})\|_{L_{x}^{2}}\\
\lesssim t^{-2}(\log t)^{2}(\|A_{1}\|_{H^{2,2}}+\|B_{1}\|_{H^{2,2}})
\Jbr{\|A_{1}\|_{H^{2,2}}+\|B_{1}\|_{H^{2,2}}}^{4}
\end{multline*}
for $t\ge 3$.
\end{proposition}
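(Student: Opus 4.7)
The plan is to carry out the two dimensional analogue of the proof of Proposition \ref{P:uapNr1D}: apply Lemma \ref{L:hc2D} directly to $u_{\mathrm{ap}}$ in the hyperbolic coordinate, exploit the fact that the phase modifications $S_A$ and $S_B$ were designed precisely so that the leading contribution from the lemma reproduces $N_{\mathrm{r}}(u_{\mathrm{ap}})$, and then estimate the remaining $R_2$-type term in $L^2_x$ via the change-of-variables identity \eqref{E:intchange2D}.

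First I would split $u_{\mathrm{ap}} = u_{\mathrm{ap}}^A + u_{\mathrm{ap}}^B$, where in the hyperbolic coordinate
\[
u_{\mathrm{ap}}^A = \tau^{-1}(\cosh\sigma)^{-1} e^{-i\tau} H_A(z,\tau), \qquad H_A(z,\tau) := A_1(z)\, e^{iS_A(z)(\log\tau + \log\Jbr{z})},
\]
and $u_{\mathrm{ap}}^B$ is the analogous piece built from $B_1$, $S_B$, and $e^{+i\tau}$. Each has the form required by Lemma \ref{L:hc2D} with $m = 0$ and $n = \mp 1$: the $f_1$ contribution vanishes because $1 - n^2 = 0$, and the $R_1$ contribution vanishes because $m = 0$. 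Using $\partial_\tau H_A = i\tau^{-1} S_A(z) H_A$, the $f_2$ term reduces to a scalar multiple of $\Jbr{z} S_A(z) H_A$, which transported back to the $(t,x)$-variables reproduces exactly the $A$-half of $N_{\mathrm{r}}(u_{\mathrm{ap}})$ in \eqref{E:resnonres2d}; this is the defining property of $S_A$ in \eqref{E:SASB2D}, and symmetrically for the $B$-half. Consequently,
\[
(\square_{t,x} + 1) u_{\mathrm{ap}} - N_{\mathrm{r}}(u_{\mathrm{ap}}) = t^{-3} e^{-i\tau} R_2^A(z,\tau) + t^{-3} e^{i\tau} R_2^B(z,\tau),
\]
with $R_2^{A/B}$ the $m = 0$ remainder in Lemma \ref{L:hc2D}. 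Invoking \eqref{E:intchange2D} reduces the proposition to
\[
\| \Jbr{z}^{-2} R_2^A(z, t/\Jbr{z}) \|_{L^2_z(\R^2)} \lesssim (\log t)^2 (\|A_1\|_{H^{2,2}} + \|B_1\|_{H^{2,2}}) \Jbr{\|A_1\|_{H^{2,2}} + \|B_1\|_{H^{2,2}}}^4
\]
together with the symmetric bound for $R_2^B$.

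The main obstacle is the $L^2_z$ control of $\Delta_z H_A$ and of the Hessian piece $z^t \nabla_z^2 H_A\, z$ appearing inside $R_2^A$, since $H_A$ carries the phase $iS_A(z) L(z,\tau)$ with $L(z,\tau) := \log\tau + \log\Jbr{z}$ that depends on $z$ in a nontrivial way. Applying Leibniz to $H_A = A_1 e^{iS_A L}$, the two $z$-derivatives generate schematic terms $(\partial^\beta A_1) e^{iS_A L}$ for $|\beta| \le 2$, together with $(\partial A_1)(\partial S_A) L e^{iS_A L}$, $A_1(\partial^2 S_A) L e^{iS_A L}$, and $A_1 (\partial S_A)^2 L^2 e^{iS_A L}$, plus harmless contributions involving the bounded quantity $\partial_{z_j}\log\Jbr{z}$. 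Each must be controlled in $L^2_z$ after multiplication by the weight $\Jbr{z}^{-2}$ and evaluation at $\tau = t/\Jbr{z}$.

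To close the estimate I would express $\partial S_A$ and $\partial^2 S_A$ in terms of derivatives of $\zeta$, $|A_1|$, and $L_0(\zeta)$ using \eqref{E:SASB2D}. By Assumption \ref{A:2D}, $\partial\zeta \in L^4 + L^\infty$ and $\partial^2 \zeta \in L^2 + L^\infty$; combined with $A_1 \in H^{2,2}$ and the $C^2$ regularity of $L_0$ established in Appendix \ref{S:L0}, this yields $\partial S_A \in L^4 + L^\infty$ and $\partial^2 S_A \in L^2 + L^\infty$. Crucially, at the point of evaluation $\tau = t/\Jbr{z}$ the quantity $L(z,\tau)$ becomes the $z$-independent constant $\log t$, so the logarithmic factors pull out of the norm and produce the $(\log t)^2$ factor. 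Each of the schematic terms above is then bounded in $L^2_z$ by H\"older, with the polynomial in the $H^{2,2}$ norms arising from combinations such as $(\partial S_A)^2 A_1$. The remaining contributions $\tau^2 \partial_\tau^2 H_A = -(iS_A + S_A^2) H_A$ and $2 H_A$ carry no $z$-derivatives and are pointwise dominated by polynomial expressions in $|A_1|$ and $|B_1|$, since $|S_A| \lesssim \Jbr{z}^{-1}(|A_1|+|B_1|)$ as noted after Theorem \ref{T:main}; the weight $\Jbr{z}^{-2}$ then places them in $L^2_z$. The identical analysis for $R_2^B$ (with $A_1 \leftrightarrow B_1$ and $\zeta \leftrightarrow 1/\zeta$) gives the symmetric bound, completing the argument.
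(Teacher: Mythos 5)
Your proposal follows the paper's argument step for step: split the profile into the $A$- and $B$-halves, apply Lemma~\ref{L:hc2D} with $m=0$, $n=\mp1$ (so $f_1=R_1=0$ and $f_2$ reproduces $N_{\mathrm{r}}$), transfer the remaining $R_2$-term to the $z$-variable via~\eqref{E:intchange2D}, and bound the resulting expressions using the derivative estimates on $S_A,S_B$, which is exactly the content of Lemma~\ref{L:Sest2D}. One small inaccuracy: the claim that $\partial^2 S_A\in L^2+L^\infty$ is not correct in general, since $\partial^2 S_A$ involves $\partial^2|A_1|$, which can be singular (like $|\nabla A_1|^2/|A_1|$) near zeros of $A_1$; the paper therefore proves the weighted bound $\Jbr{z}^{5}A_1\,\partial^2 S_A\in L^2$ rather than a bound on $\partial^2 S_A$ alone. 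Your own schematic list already shows that $\partial^2 S_A$ only ever appears multiplied by $A_1$ (in the term $A_1(\partial^2 S_A)L\,e^{iS_A L}$), so the argument goes through once the claim is stated for the product $A_1\partial^2 S_A$ rather than for $\partial^2 S_A$ itself.
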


Before the proof of the proposition, we prepare one lemma.
\begin{lemma}\label{L:Sest2D}
Let $A_1,B_1\in H^{2,2}$ satisfy Assumption \ref{A:2D}. 
Then, 
\begin{align*}
	\norm{\Jbr{z}^{3}\nabla S_A(z)}_{L^4_z}
	\lesssim_{\rho_0} |\l|\norm{A_1}_{H^{2,2}}(1+\norm{\nabla \zeta}_{L^\I+L^4})
\end{align*}
and
\begin{multline*}
	\norm{\Jbr{z}^{5}A_1(z) \partial_{z_j}\partial_{z_k} S_A(z)}_{L^2_z}\\
	\lesssim_{\rho_0} |\l|\norm{A_1}_{H^{2,2}}^2(1+\norm{\nabla \zeta}_{L^\I+L^4}^2+\norm{\partial_{z_j}\partial_{z_k} \zeta }_{L^\I+L^2}),
\end{multline*}
where the implicit constants depend on the constant $\rho_0>1$ given in Assumption \ref{A:2D} and  
\[
	\max_{k=0,1,2} \sup_{\zeta \in [\rho_0^{-1},\rho_0]} L_0^{(k)}(\zeta).
\]
Similar estimates hold for $\nabla S_B$ and $B_1 \partial_{z_j}\partial_{z_k} S_B$.
\end{lemma}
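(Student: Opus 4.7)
The plan is to expand $\nabla S_A$ and $\nabla^2 S_A$ by the Leibniz rule, treating $S_A=-\tfrac{\lambda}{2}\langle z\rangle^{-1}L_0(\zeta(z))|A_1(z)|$ as a product of the three factors $\langle z\rangle^{-1}$, $L_0\circ\zeta$, and $|A_1|$. Each derivative of $|A_1|$ will be handled using the pointwise bound $|\nabla|A_1||\le|\nabla A_1|$, and the powers $\langle z\rangle$ and $\nabla\langle z\rangle^{-\ell}$ will be tracked carefully so the weight exponents work out. Boundedness of $L_0^{(k)}$ for $k=0,1,2$ on $[\rho_0^{-1},\rho_0]$, included in the implicit constant of the lemma, immediately controls the $L_0$-factors, and the hypotheses on $\partial_j\zeta\in L^\infty+L^4$ and $\partial_j\partial_k\zeta\in L^\infty+L^2$ from Assumption~\ref{A:2D} feed in the $L^p$ integrability of the $\zeta$-derivatives.

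For the first inequality, expand
\[
\langle z\rangle^3\nabla S_A=-\tfrac{\lambda}{2}\big[\langle z\rangle^3\nabla\langle z\rangle^{-1}\cdot L_0(\zeta)|A_1|+\langle z\rangle^2 L_0'(\zeta)\nabla\zeta\,|A_1|+\langle z\rangle^2 L_0(\zeta)\nabla|A_1|\big].
\]
Since $\langle z\rangle^3|\nabla\langle z\rangle^{-1}|\lesssim\langle z\rangle$, the first term is controlled by $\|\langle z\rangle A_1\|_{L^4}$, which is bounded by $\|A_1\|_{H^{2,2}}$ via Gagliardo--Nirenberg in $\R^2$. The third term is similar with $\nabla A_1$ in place of $A_1$. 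For the middle term, split $\nabla\zeta=g_\infty+g_4$ with $g_\infty\in L^\infty$, $g_4\in L^4$ and use H\"older together with the Sobolev embeddings $\langle z\rangle^2 A_1\in H^2\hookrightarrow L^\infty$ (for the $g_4$ piece) and $\|\langle z\rangle^2 A_1\|_{L^4}\lesssim\|A_1\|_{H^{2,2}}$ (for the $g_\infty$ piece). This yields the claimed $L^4$ bound.

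For the second inequality, Leibniz produces seven types of terms; the key observation is that the only potentially singular one involves $\partial_j\partial_k|A_1|$. Since $|A_1|\,\partial_j|A_1|=\mathrm{Re}(\overline{A_1}\partial_j A_1)$, differentiating once more gives
\[
|A_1|\,\partial_j\partial_k|A_1|=\mathrm{Re}(\overline{\partial_k A_1}\partial_j A_1+\overline{A_1}\partial_j\partial_k A_1)-\frac{\mathrm{Re}(\overline{A_1}\partial_j A_1)\,\mathrm{Re}(\overline{A_1}\partial_k A_1)}{|A_1|^2},
\]
so $|A_1\,\partial_j\partial_k|A_1||\lesssim|\nabla A_1|^2+|A_1|\,|\nabla^2 A_1|$ pointwise; this is exactly the reason the prefactor $A_1$ in the statement cancels the singularity. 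Multiplying by $\langle z\rangle^4 L_0(\zeta)$, the $L^2$ norm is then bounded by $\|\langle z\rangle^2\nabla A_1\|_{L^4}^2+\|\langle z\rangle^2 A_1\|_{L^\infty}\|\langle z\rangle^2\nabla^2 A_1\|_{L^2}\lesssim\|A_1\|_{H^{2,2}}^2$ using Sobolev/Gagliardo--Nirenberg in $\R^2$.

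All remaining terms are benign: those carrying $\partial_j\partial_k\zeta$ are split as $L^\infty+L^2$ and absorbed into $\|\langle z\rangle^2 A_1\|_{L^\infty}$ and $\|\langle z\rangle^2 A_1\|_{L^2}$; those carrying $(\partial_j\zeta)(\partial_k\zeta)$ are split into $L^\infty+L^4+L^2$ pieces via $(L^\infty+L^4)^2$ and paired with appropriate $L^p$ norms of $\langle z\rangle^2 A_1$. Collecting everything with the bound on $L_0^{(k)}$ and summing over the expansion yields the stated estimate, with the second factor $\Jbr{\cdot}^2$ accounting for the two $\zeta$-derivatives. The analogous identity $S_B=\tfrac{\lambda}{2}\langle z\rangle^{-1}L_0(1/\zeta)|B_1|$ has the same structure, so the same expansion with $\zeta\mapsto 1/\zeta$ (noting $(1/\zeta)'=-\zeta^{-2}\zeta'$ and $\zeta\in[\rho_0^{-1},\rho_0]$) gives the companion estimates. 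The main obstacle is the potential singularity of $\nabla^2|A_1|$; once the identity above is in hand, the rest is a bookkeeping exercise in Sobolev embeddings.
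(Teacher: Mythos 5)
Your proposal is correct and follows essentially the same route as the paper: expand $\nabla S_A$ and $\partial_j\partial_k S_A$ by Leibniz, control $L_0^{(k)}$ by the constant built into the lemma, handle $\nabla\zeta$ and $\nabla^2\zeta$ via the $L^\infty+L^4$ and $L^\infty+L^2$ splittings, and — crucially — use the pointwise bounds $|\nabla|A_1||\le|\nabla A_1|$ and $|A_1\,\partial_j\partial_k|A_1||\lesssim|A_1||\nabla^2 A_1|+|\nabla A_1|^2$ (the paper states exactly these estimates, derived there by writing $|A_1|=A_1^{1/2}\overline{A_1}^{1/2}$; your derivation via $|A_1|\partial_j|A_1|=\operatorname{Re}(\overline{A_1}\partial_j A_1)$ is equivalent and somewhat cleaner), together with H\"older and the Sobolev embeddings $H^1(\R^2)\hookrightarrow L^4$, $H^2(\R^2)\hookrightarrow L^\infty$.
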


\begin{proof}
Just for notational simplicity, we let $\l=2$.
The first estimate is immediate from
\begin{equation*}
\begin{aligned}
\partial_{ z_j} S_{A} (z)
&= -\< z\>^{-3} z_j |A_1|
L_0\left(\zeta(z)\right)\\
&+ 2^{-1}\< z\>^{-1}
\left(A_1^{-{1}/{2}}\overline{A_1}^{{1}/{2}}\partial_{ z_j} A_1
+ A_1^{{1}/{2}}\overline{A_1}^{-{1}/{2}}\partial_{ z_j} \overline{A_1}
\right)L_0\left(\zeta(z)\right)\\
&+ \< z\>^{-1}|A_1|L_0'\left( \zeta(z) \right)
 \partial_{ z_j} \zeta(z)
\end{aligned}
\end{equation*}
and H\"older's inequality.
Let us move to the second estimate. 
For the second order derivative for $S_{A}$, we have 
\begin{equation*}
\begin{aligned}
& \partial_{ z_j}\partial_{ z_k} S_{A}(z)\\
&{}= -\< z\>^{-5} z_k z_j|A_1(z)|
L_0(\zeta(z))
-\delta_{jk}\< z\>^{-3}|A_1(z)|L_0(\zeta(z))\\
&\quad -\< z\>^{-3} z_k \partial_{ z_j}|A_1(z)|
L_0(\zeta(z))
- \< z\>^{-3} z_j \partial_{ z_j}|A_1(z)|L_0(\zeta(z))\\
&\quad - \< z\>^{-3} z_k|A_1(z)|L'_0\left(\zeta(z)\right)\partial_{ z_j}\zeta(z)
- \< z\>^{-3} z_j|A_1(z)| L_0'\left(\zeta(z)\right) \partial_{ z_k}\zeta(z) \\
&\quad 
+ \< z\>^{-1}\partial_{ z_j}\partial_{ z_k}|A_1(z)|
L_0\left(\zeta(z)\right)
+ \< z\>^{-1}\partial_{ z_k}|A_1(z)|
L_0'\left(\zeta(z)\right)\partial_{ z_j}\zeta(z)\\
&\quad + \< z\>^{-1}\partial_{ z_j}|A_1(z)|
L_0'(\zeta(z)) \partial_{ z_k}\zeta(z)
+ \< z\>^{-1}|A_1(z)|
L_0''(\zeta(z)) \partial_{ z_k}\zeta(z) \partial_{ z_j}\zeta(z)\\
&\quad 
+ \< z\>^{-1}|A_1(z)|
L_0'\left(\zeta(z)\right)
\partial_{ z_j}\partial_{ z_k}\zeta(z).
\end{aligned}
\end{equation*}
The desired estimate follows from H\"older's estimate, Sobolev embedding, and
the estimates
\[
	|\partial_{z_j}|A_1|| \lesssim |\partial_{z_j}A_1|,\quad
	|A_1\partial_{z_j}\partial_{z_k}|A_1|| \lesssim |A_1| |\partial_{z_j}\partial_{z_k}A_1| +
	|\partial_{z_j}A_1||\partial_{z_k}A_1|
\]
for almost all $z\in\R^2$.
\end{proof}

\begin{proof}[Proof of Proposition \ref{P:uapNr2D}]
By Lemma \ref{L:hc2D} with $n=\mp1$ and $ m =0$,
\[
	(\square+1)u_{\mathrm{ap}}-N_{\mathrm{r}}(u_{\mathrm{ap}})
	=  t^{-3}  \sum_{n=\mp1}e^{in\tau} R_{2,n}(\sinh \sigma \cos \omega, \sinh \sigma \sin \omega ,\tau) 
\]
for $(t,x)$ such that $t>1$ and $|x|<t$, where
\begin{align*}
	R_{2,n}(z_1,z_2,\tau) ={}&\textstyle \Jbr{z}^2 \tau^2 \frac{\partial^2}{\partial \tau^2} H_n(z_1,z_2,\tau)
	 - \Jbr{z}^{2}\Delta_z H_n (z_1,z_2,\tau)\\
	&\quad - \Jbr{z}^{2} \begin{pmatrix}z_1 \\ z_2 \end{pmatrix}^t \nabla_z^2 H_n(z_1,z_2,\tau)\begin{pmatrix}z_1 \\ z_2 \end{pmatrix} +2 H_n (z_1,z_2,\tau)
\end{align*}
and
\begin{align*}
	H_{-1}(z_1,z_2,\tau) &{}= 
	A_1(z) e^{iS_A(z)(\log \tau+\log \Jbr{z})},\\
	H_1(z_1,z_2,\tau) &{}= 
	B_1(z) e^{iS_B(z)(\log \tau+\log \Jbr{z})}.
\end{align*}

We consider the case $n=-1$. For fixed $t\ge1$, we see from \eqref{E:intchange2D} that
\begin{align*}\textstyle
	\norm{\Jbr{z}^2 \tau^2 \frac{\partial^2}{\partial \tau^2} H_{-1}(z_1,z_2,\tau)}_{L^2_x( |x|<t )}
	&{}\lesssim t \norm{|A_1(z)|(|S_A(z)|+|S_A(z)|^2)}_{L^2_z}\\
	&{}\lesssim_{\rho_0} t \norm{A_1}_{H^{2,2}}^2\Jbr{\norm{A_1}_{H^{2,2}}}
\end{align*}
and, by means of Lemma \ref{L:Sest2D},
\begin{align*}
	&\norm{\Jbr{z}^{4}\abs{\nabla_{z}^2 H_{-1} (z_1,z_2,\tau)}}_{L^2_x( |x|<t )}\\
	&\lesssim t\norm{\Jbr{z}^{2}|\nabla_{z}^2A_1(z)|}_{L^2_z}
	+t\norm{\Jbr{z}^{2}|\nabla A_1(z)|(|\nabla S_A|\log t+\Jbr{z}^{-1}| S_A|)}_{L^2_z}\\
	& \quad + t\norm{\Jbr{z}^{2}|A_1(z)||\nabla S_A|^2}_{L^2_z}+ t\log t\norm{\Jbr{z}^{2}|A_1(z)||\nabla^2 S_A|}_{L^2_z} \\
	& \quad + t\norm{\Jbr{z}^{2}|A_1(z)|(|\nabla S_A|^2(\log t)^2+\Jbr{z}^{-2}| S_A|^2+\Jbr{z}^{-1}| \nabla S_A|)}_{L^2_z}\\
	&{}\lesssim_{\rho_0} t\Jbr{\log t}^2\norm{A_1}_{H^{2,2}} \Jbr{\norm{A_1}_{H^{2,2}} }^2 .
\end{align*}
The other terms are easier to handle.
One has similar estimates for $n=1$. Hence, we obtain the result.
\end{proof}

\subsection{Cancellation of the non-resonant part}

\begin{proposition}\label{P:vn2d} 
Let $N_{\mathrm{nr}}$ be the non-resonant part of the nonlinearity given in \eqref{E:resnonres2d}.
Let $v_{\mathrm{ap}}$ be as in \eqref{E:vap2d}.
Then, we have 
\begin{multline*}
\|(\Box+1)v_{\mathrm{ap}}-N_{\mathrm{nr}}(u_{\mathrm{ap}})\|_{L_{x}^{2}}\\
\lesssim t^{-2}(\|A_1\|_{H^{2,2}}+\|B_1\|_{H^{2,2}})^2\Jbr{\|A_1\|_{H^{2,2}}+\|B_1\|_{H^{2,2}}}^{2}.
\end{multline*}
\end{proposition}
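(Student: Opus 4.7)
The strategy parallels the one dimensional case of Proposition~\ref{P:vn1d}, but with an infinite Fourier expansion in place of two non-resonant terms. I would split
\[
v_{\mathrm{ap}}=\sum_{n\ge 2}\bigl(v_n^{-}+v_n^{+}\bigr),
\]
where $v_n^{-}$ carries the phase $e^{i(2n-1)\theta}$ with amplitude $A_n$, and $v_n^{+}$ carries $e^{-i(2n-1)\theta}$ with amplitude $B_n$. In the hyperbolic coordinates, each $v_n^{\pm}$ fits the template of Lemma~\ref{L:hc2D} with $m=1$ and $n_{\mathrm{lem}}=\mp(2n-1)$. Because $A_n$ and $B_n$ were defined precisely with the prefactor $\lambda/(1-(2n-1)^{2})$, the leading $t^{-2}f_1$ term produced by the lemma equals exactly the $n$-th summand of $N_{\mathrm{nr}}(u_{\mathrm{ap}})$ in the expansion \eqref{E:resnonres2d}. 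Hence the cancellation is algebraic, and what remains to estimate is
\[
(\Box+1)v_{\mathrm{ap}}-N_{\mathrm{nr}}(u_{\mathrm{ap}})=\sum_{n\ge 2}\sum_{\pm}\Bigl[t^{-2}e^{\mp i(2n-1)\tau}f_{2,n}^{\pm}+t^{-3}e^{\mp i(2n-1)\tau}R_{1,n}^{\pm}+t^{-4}e^{\mp i(2n-1)\tau}R_{2,n}^{\pm}\Bigr].
\]

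The individual terms are then bounded via \eqref{E:intchange2D}. In $f_{2,n}^{\pm}$ the operator $\partial_\tau$ falls on $\log\tau$ inside the phase and produces an extra $\tau^{-1}=\Jbr{z}/t$, so $\|t^{-2}e^{\mp i(2n-1)\tau}f_{2,n}^{\pm}\|_{L^{2}_x}\lesssim t^{-2}\cdot n\,\|\Jbr{z}^{-1}(nS_A-(n-1)S_B)A_n\|_{L^{2}_z}$; using $|S_A|+|S_B|\lesssim|\lambda|\Jbr{z}^{-1}(|A_1|+|B_1|)$ and the pointwise bound $|A_n|\lesssim n^{-5}\,|\lambda|^{2}|A_1|^{2}$ (coming from the prefactor $n^{-2}$ combined with $|L_{n-1}|\lesssim n^{-3}$ from Proposition~\ref{P:Lnest}) gives an $n^{-3}$-summable contribution at the desired size. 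The term $R_{1,n}^{\pm}$ is handled similarly, with one power of $t^{-1}$ coming from its prefactor. The $R_{2,n}^{\pm}$ term, involving $\Delta_z$ and $\nabla^{2}_z$ acting on $H_n^{\pm}$, is the most involved; the worst piece produces $\Jbr{z}^{2}|A_n||\nabla S_A|^{2}(\log t)^{2}$, $\Jbr{z}^{2}|A_n||\nabla^{2}S_A|\log t$, and $\Jbr{z}^{2}(\nabla A_n)(\nabla S_A)\log t$, which I would control by Lemma~\ref{L:Sest2D} together with its direct analogue for $A_n$ in place of $|A_1|A_1$.

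The analogue of Lemma~\ref{L:Sest2D} for $A_n$ requires differentiating the formula $A_n=\lambda(1-(2n-1)^{2})^{-1}L_{n-1}(\zeta)|A_1|A_1 e^{i(n-1)\alpha}$. Each $z$-derivative hits either $L_{n-1}$, producing the factor $L_{n-1}^{(k)}(\zeta)$ which obeys Proposition~\ref{P:Lnest} with the improved power $n^{k-3}$, or it hits $|A_1|A_1 e^{i(n-1)\alpha}$, which is smooth thanks to Assumption~\ref{A:2D} provided one reorganizes $|A_1|A_1 e^{i(n-1)\alpha}$ in terms of $A_1,\overline{A_1},B_1,\overline{B_1}$ before differentiating (this is important since $\alpha$ alone is not smooth at zeros of $A_1$ or $B_1$). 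Combined with the $n^{-2}$ from the prefactor, the $k$-th derivative of $A_n$ is bounded pointwise by $n^{k-5}$ times the appropriate product of $A_1,B_1$ and their derivatives, so the series is summable for $k=0,1,2$.

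\emph{Main obstacle.} I expect the delicate step to be the second-derivative analogue of Lemma~\ref{L:Sest2D} for $A_n$: one has to verify that all apparent singularities coming from $\nabla\alpha=\nabla\arg(A_1\overline{B_1})$ cancel once one regroups into smooth functions of $(A_1,B_1)$, and simultaneously keep the $n$-dependence through $L_{n-1}^{(k)}$ tight enough so that after summation over $n\ge 2$ and over $k\le 2$ one recovers the bilinear bound $(\|A_1\|_{H^{2,2}}+\|B_1\|_{H^{2,2}})^{2}\langle\|A_1\|_{H^{2,2}}+\|B_1\|_{H^{2,2}}\rangle^{2}$. Once this bookkeeping is settled, the remaining estimates are routine Hölder and Sobolev embedding arguments, exactly as in the proof of Proposition~\ref{P:uapNr2D}.
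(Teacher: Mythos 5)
Your proposal follows essentially the same route as the paper: decompose $v_{\mathrm{ap}}$ mode by mode, apply Lemma \ref{L:hc2D} with $m=1$ so that the $f_1$ term cancels $N_{\mathrm{nr}}(u_{\mathrm{ap}})$ exactly thanks to the prefactor $\l/(1-(2n-1)^2)$, and then bound $f_2$, $R_1$, $R_2$ using \eqref{E:intchange2D}, Proposition \ref{P:Lnest} for $L_{n-1}^{(k)}$, and the rewriting of $|A_1|A_1e^{i(n-1)\alpha}$ as fractional powers of $A_1,\overline{A_1},B_1,\overline{B_1}$ to avoid the non-smoothness of $\alpha$, with each derivative costing a factor $O(\Jbr{n})$ against the initial $O(\Jbr{n}^{-5})$ size of $A_n,B_n$. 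The bookkeeping you flag as the main obstacle is handled in the paper at the same level of detail you indicate, so the argument is correct and matches.
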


\begin{proof}
We apply Lemma \ref{L:hc2D} with $m=1$ to obtain
\begin{align*}
	&(\square_{t,x} + 1)v_{\mathrm{ap}} - N_{\mathrm{nr}}(u_{\mathrm{ap}})\\
	&{}= t^{-2} \sum_{n\neq -1,0}  e^{-i(2n+1)\tau }f_{2,2n+1}(\sinh \sigma \cos \omega, \sinh \sigma, \sin \omega ,\tau) \\
	&\quad +t^{-3} \sum_{n\neq -1,0}  e^{-i(2n+1)\tau }R_{1,2n+1}(\sinh \sigma \cos \omega, \sinh \sigma, \sin \omega ,\tau) \\
	&\quad + t^{-4} \sum_{n\neq -1,0}e^{-i(2n+1)\tau} R_{2,2n+1}(\sinh \sigma \cos \omega, \sinh \sigma, \sin \omega ,\tau)
\end{align*}
for $(t,x)$ such that $t>1$ and $|x|<t$, where
\[\textstyle
	f_{2,2n+1}(z_1,z_2,\tau) = -2(2n+1)i\frac{\partial}{\partial \tau} H_n(z_1,z_2,\tau),
\]
\[\textstyle
	R_{1,2n+1}(z_1,z_2,\tau) = \Jbr{z} \( 2(2n+1)i - 2\frac{\partial}{\partial \tau} + \frac{2}{\tau} \) H_n(z_1,z_2,\tau),
\]
and
\begin{align*}
	R_{2,2n+1}(z_1,z_2,\tau) ={}& \textstyle\Jbr{z}^2 \tau^2 \frac{\partial^2}{\partial \tau^2} H_n(z_1,z_2,\tau)
	 - \Jbr{z}^{3}\Delta_z (\Jbr{z}^{-1}H_n) (z_1,z_2,\tau)\\
	& - \Jbr{z}^{3} \begin{pmatrix}z_1 \\ z_2 \end{pmatrix}^t \nabla_z^2 (\Jbr{z}^{-1}H_n(z_1,z_2,\tau)) \begin{pmatrix}z_1 \\ z_2 \end{pmatrix} +2 H_n (z_1,z_2,\tau)
\end{align*}
with
\begin{align*}
	H_n(z_1,z_2,\tau) ={}&\textstyle\frac{\l}{1-(2n+1)^2} A_1(z)^{\frac{n+3}2} \overline{A_1(z)}^{\frac{-n+1}2} B_1(z)^{-\frac{n}2} \overline{B_1(z)}^{\frac{n}2}\\
	&{}\times L_{n}(\zeta(z))e^{i((n+1)S_A(z)-n S_B(z))(\log \tau +\log \Jbr{z})} .
\end{align*}

For $n\neq 0,-1$, we have
\[
	|f_{2,2n+1}(z_1,z_2,\tau)| \lesssim_{\rho_0} \Jbr{n}^{-3} \tau^{-1} (|S_A(z)|+|S_B(z)|)|A_1(z)|^2,
\]
from which we obtain estimate for $f_{2,2n+1}$:
\[\textstyle
	\norm{\Jbr{z}^{-2}f_{2,2n+1}\(z_1,z_2,\frac{t}{ \Jbr{z}}\) }_{L^2_z}
	\lesssim_{\rho_0} t^{-1}\Jbr{n}^{-3} (\norm{A_1}_{L^6}+\norm{B_1}_{L^6})\norm{A_1}_{L^6}^2.
\]
This is acceptable by Sobolev embedding $H^{2/3} \hookrightarrow L^6$.

Let us next consider $R_{1,2n+1}$.
For $n\neq 0,-1$,
\begin{align*}
	&|R_{1,2n+1}(z_1,z_2,\tau)| \\
	& \lesssim_{\rho_0} \Jbr{n}^{-4}\Jbr{z}|A_1(z)|^2 + \Jbr{n}^{-3}
 \tau^{-1} \Jbr{z}(|S_A(z)|+|S_B(z)|)|A_1(z)|^2 \\
	&\qquad{}+ \Jbr{n}^{-5} \tau^{-1} \Jbr{z}|A_1(z)|^2
\end{align*}
and so
\begin{align*}
	&\textstyle\norm{\Jbr{z}^{-2}R_{1,2n+1}\(z_1,z_2,\frac{t}{ \Jbr{z}}\) }_{L^2_z}\\
	&\lesssim_{\rho_0} \Jbr{n}^{-4}\norm{A_1}_{L^4}^2+ t^{-1}\Jbr{n}^{-3} (\norm{A_1}_{L^6}+\norm{B_1}_{L^6})\norm{A_1}_{L^6}^2
	+ \Jbr{n}^{-5}t^{-1} \norm{A_1}_{L^4}^2.
\end{align*}

The estimate for $R_{2,2n+1}$ is similar to those in the proof of Proposition \ref{P:uapNr2D}.
Each derivative produces multiplication by $O(\Jbr{n})$. It is easy to see if derivatives hit to the phase factor. 
In the case where derivatives hit to the amplitude factor, we will use 
\begin{align*}
	&|\nabla (A_1^{\frac{n+3}2} \overline{A_1}^{\frac{-n+1}2}B_1^{-\frac{n}2} \overline{B_1}^{\frac{n}2} )|\\
	&{}\lesssim \Jbr{n} |A_1||\nabla A_1| + \Jbr{n} |A_1|^2|B_1|^{-1}|\nabla B_1| \\
	&{}\lesssim \Jbr{n}(\zeta^{-2} +1)(|A_1||\nabla A_1|+|B_1||\nabla B_1|)
\end{align*}
and
\begin{align*}
	&|B_1\nabla^2 (A_1^{-\frac{n}2} \overline{A_1}^{\frac{n}2} B_1^{\frac{n+3}2} \overline{B_1}^{\frac{-n+1}2})|\\
	&{}\lesssim \Jbr{n}^2(\zeta^{-2} +1)(|A_1||\nabla^2 A_1|+|\nabla A_1|^2+|B_1||\nabla^2 B_1|+|\nabla B_1|^2).
\end{align*}
Similarly, when the derivative hit to $L_n$, the decay order in $n$ becomes worse,
which also can be understood as the appearance of $O(\Jbr{n})$ factor produced by derivative.
The other difference is that second derivative $\partial_z^2$ is replaced by $\Jbr{z} \partial_z^2 \Jbr{z}^{-1}$.
This creates several lower order terms but they are easier to handle.
In conclusion, the resulting order is no worse than $O(\Jbr{n}^{-3})$ because $A_n,B_n$ is originally $O(\Jbr{n}^{-5})$,
and hence we can sum up with respect to $n$.
\end{proof}

\subsection{Proof of Proposition \ref{P:ua2D}}

\begin{proof}[Proof of Proposition \ref{P:ua2D}]
The first estimate \eqref{eq:ua12d} is immediate by definition of $u_{\mathrm{ap}}$ and $v_{\mathrm{ap}}$.
Let us prove \eqref{eq:ua22d}. By the triangle inequality,
\begin{multline*}
	\norm{(\square +1)\widetilde{u} - N(\widetilde{u})}_{L^2_x}
	\le \norm{(\square +1)u_{\mathrm{ap}} - N_{\mathrm{r}}(u_{\mathrm{ap}})}_{L^2_x}
	\\+ \norm{(\square +1)v_{\mathrm{ap}} - N_{\mathrm{nr}}(u_{\mathrm{ap}})}_{L^2_x}
	+\norm{N(u_{\mathrm{ap}})-N(u_{\mathrm{ap}}+v_{\mathrm{ap}})}_{L^2_x}.
\end{multline*}
The first two terms of the right hand side are handled by Propositions \ref{P:uapNr2D} and \ref{P:vn2d}.
Finally, 
\begin{align*}
	\norm{N(u_{\mathrm{ap}})(t)-N(u_{\mathrm{ap}}+v_{\mathrm{ap}})(t)}_{L^2_x}
	&{}\lesssim (\norm{u_{\mathrm{ap}}(t)}_{L^\I}+ \norm{v_{\mathrm{ap}}(t)}_{L^\I})\norm{v_{\mathrm{ap}}(t)}_{L^2}\\
	&{}\lesssim t^{-2} \norm{A_1}_{L^2}\Jbr{\norm{A_1}_{L^\I} + \norm{B_1}_{L^\I}}^3.
\end{align*}
Combining the above estimates, we complete the proof of \eqref{eq:ua22d}.

Let us show $\tilde{u}(t)$ satisfies \eqref{E:condc2d} if $\{\xi \in \R^2\ |\ |A_1(\xi)|\neq |B_1(\xi)| \}$ has positive measure.
Notice that if we show the set is equal to $\{\xi \in \R^2\ |\ S_A(\xi) + S_B(\xi)\neq 0 \}$ then
the desired conclusion can be obtained by the same argument
as in the proof of Proposition \ref{P:ua}.
One easily sees that $|A_1(\xi)|= |B_1(\xi)|$ implies $S_A(\xi) + S_B(\xi)= 0$.
Hence, it suffices to show that if $S_A(\xi) + S_B(\xi)= 0$ then $|A_1(\xi)|= |B_1(\xi)|$.

Suppose that $S_A(\xi) + S_B(\xi)= 0$. By Assumption \ref{A:2D}, $A_1(\xi)=0$ implies $B_1(\xi)=0$ and so $|A_1(\xi)|=|B_1(\xi)|$.
Consider the case $A_1(\xi)\neq 0$. We have
\begin{equation}\label{E:condc2dpf1}
	0=S_A(\xi) + S_B(\xi) = -\frac{\l}2 (L_0(\zeta(\xi))-\zeta(\xi) L_0(\zeta(\xi)^{-1})) |A_1(\xi)|.
\end{equation}
Remark that
\[
	L_0(\zeta) =\textstyle \frac1{\pi} \int_{0}^{\pi} \sqrt{1+\zeta^2 + 2\zeta \cos \theta} (1 +\zeta \cos \theta)d\theta
\]
and so that
\begin{align*}
	&L_0(\zeta)- \zeta L_0(\tfrac1\zeta) \\
	&{}\textstyle= \tfrac{\zeta^2-1}{\pi\zeta} \int_{0}^{\pi} \sqrt{1+\zeta^2 + 2\zeta \cos \theta} \cos \theta d\theta\\
	&{}\textstyle= \tfrac{4(\zeta^2-1)}{\pi} \int_{0}^{\pi/2} (\sqrt{1+\zeta^2 + 2\zeta \cos \theta}+\sqrt{1+\zeta^2 - 2\zeta \cos \theta})^{-1} \cos^2 \theta d\theta.
\end{align*}
This shows that $L_0(\zeta)-\zeta L_0(\zeta^{-1})=0$ if and only if $\zeta=1$.
Hence, the above identity \eqref{E:condc2dpf1} implies that $\zeta(\xi)=1$, that is, $|A_1(\xi)|=|B_1(\xi)|$.
\end{proof}

\appendix

\section{Properties of $L_0(\zeta)$ in two dimensions}\label{S:L0}

In this section, we give a proof of representation \eqref{E:L0EKform} of $L_0(\zeta)$ defined by \eqref{E:Lndef}, i.e.
\[\textstyle
	L_0(\zeta) = \frac1{2\pi} \int_{0}^{2\pi} |1+\zeta e^{-i\theta}|(1 +\zeta e^{-i\theta})d\theta
\]
for $\zeta>0$.

\subsection{Explicit representation}

We first note that $|1+\zeta e^{-i\theta}|$ is an even function and so
\begin{align*}
	L_0(\zeta) &{}=\textstyle \frac1{\pi} \int_{0}^{\pi} \sqrt{1+\zeta^2 + 2\zeta \cos \theta} (1 +\zeta \cos \theta)d\theta \\
	&{}=\textstyle\frac1{\pi} \int_{0}^{\pi} \sqrt{1+\zeta^2 + 2\zeta \cos \theta}d\theta
	+ \frac{\zeta}{\pi} \int_{0}^{\pi} \sqrt{1+\zeta^2 + 2\zeta \cos \theta} \cos \theta d\theta.
\end{align*}
We estimate each terms in the right hand side. One has
\begin{equation}\label{E:L0pf1}
\textstyle
	 \frac1{\pi} \int_{0}^{\pi} \sqrt{1+\zeta^2 + 2\zeta \cos \theta} d\theta 
	= \frac{2(1+\zeta)}{\pi} E(\frac{2\sqrt{\zeta}}{1+\zeta} ),
\end{equation}
where $E(k)$ is the complete elliptic integral of the second kind.
On the other hand, by integration by parts, we have
\begin{align*}
	&\textstyle\frac{\zeta}{\pi} \int_{0}^{\pi} \sqrt{1+\zeta^2 + 2\zeta \cos \theta} \cos \theta d\theta \\
	&=\textstyle
	\frac{\zeta^2}{\pi} \int_{0}^{\pi} \frac{\sin^2 \theta}{\sqrt{1+\zeta^2 + 2\zeta \cos \theta}} d\theta \\
	&=\textstyle
	\frac{\zeta^2}{\pi\sqrt{1+\zeta^2}}\(\frac{1+\zeta^2}{2\zeta}\)^2 \int_{0}^{\pi} \frac{(1- (\frac{2\zeta}{1+\zeta^2})^2\cos^2 \theta) + ((\frac{2\zeta}{1+\zeta^2})^2-1)}{\sqrt{1 + \frac{2\zeta}{1+\zeta^2} \cos \theta}} d\theta \\
	&=\textstyle\frac{1+\zeta^2}{4\pi} \int_{0}^{\pi} \sqrt{1+\zeta^2 + 2\zeta \cos \theta} d\theta 
	 - \frac{\zeta}{2\pi} \int_{0}^{\pi} \sqrt{1 +\zeta^2 + 2\zeta \cos \theta} \cos \theta d\theta \\
	&{}\quad \textstyle+		\frac{(1+\zeta^2)^{3/2}}{4\pi}((\frac{2\zeta}{1+\zeta^2})^2-1) \int_{0}^{\pi} (1 + \frac{2\zeta}{1+\zeta^2} \cos \theta)^{-\frac12} d\theta .
\end{align*}
Hence,
\begin{equation}\label{E:L0pf2}
\begin{aligned}\textstyle
	\frac{\zeta}{\pi} \int_{0}^{\pi} \sqrt{1+\zeta^2 + 2\zeta \cos \theta} \cos \theta d\theta 
	&=\textstyle\frac{1+\zeta^2}{6\pi} \int_{0}^{\pi} \sqrt{1 +\zeta^2+ 2\zeta \cos \theta} d\theta \\
	&{}\quad\textstyle -\frac{(1+\zeta)^2(1-\zeta)^2}{6\pi \sqrt{1+\zeta^2} } \int_{0}^{\pi} (1 + \frac{2\zeta}{1+\zeta^2} \cos \theta)^{-\frac12} d\theta .
\end{aligned}
\end{equation}
Here, by means of \eqref{E:L0pf1},
\begin{equation}\label{E:L0pf3}\textstyle
	\frac{1+\zeta^2}{6\pi} \int_{0}^{\pi} \sqrt{1 +\zeta^2+ 2\zeta \cos \theta} d\theta
	= \frac{(1+\zeta)(1+\zeta^2)}{3\pi} E(\frac{2\sqrt{\zeta}}{1+\zeta} ).
\end{equation}
Further,
\begin{equation}\label{E:L0pf4}
\begin{aligned}\textstyle
	\int_{0}^{\pi} (1 + \frac{2\zeta}{1+\zeta^2} \cos \theta)^{-\frac12} d\theta 
	&=\textstyle \frac{\sqrt{1+\zeta^2}}{1+\zeta} \int_{0}^{\pi} (1 - \frac{4\zeta}{(1+\zeta)^2} \sin^2 \frac{\theta}2)^{-\frac12} d\theta\\
	&=\textstyle \frac{2\sqrt{1+\zeta^2}}{1+\zeta} K(\frac{2\sqrt{\zeta}}{1+\zeta}),
\end{aligned}
\end{equation}
where $K(k)$ is the complete elliptic integral of the first kind.
Combining \eqref{E:L0pf1}, \eqref{E:L0pf2}, \eqref{E:L0pf3}, and \eqref{E:L0pf4}, we obtain 
the desired formula
\begin{equation}\label{E:L0pf5}
\textstyle
	L_0(\zeta)
=\frac{(1+\zeta)(7+\zeta^2)}{3\pi} E(\frac{2\sqrt{\zeta}}{1+\zeta} )
	- \frac{(1+\zeta)(1-\zeta)^2}{3\pi} K(\frac{2\sqrt{\zeta}}{1+\zeta}).
\end{equation}

\subsection{Singularity at $\zeta=1$}
Let us investigate the regularity of $L_0(\zeta)$.
It is known that $K(k)$ has a logarithmic singularity at $k=1$, that is
\begin{equation}\label{E:Kk1}
	c |\log(1-k)| \le K(k) \le C |\log (1-k)|
\end{equation}
as $k\uparrow1$ (see e.g.\ \cite{Lawden}).
More precise behavior can be found in  \cite{Fuk} and references therein.
We give a short proof of \eqref{E:Kk1}.
To see the upper bound, we employ
\[\textstyle
	K(k) = \int_{0}^1 (1-t^2)^{-\frac12} (1-k^2t^2)^{-\frac12} dt \le \int_{0}^1 (1-t)^{-\frac12} (1-kt)^{-\frac12} dt .
\]
Take $n$ so that $n-1 \le (1-k)^{-1} \le n$ and subdivide the interval of the integration into $I_{m}$, $m=1,2,3,\dots,n-1$
by using the relation $\frac{m}n\le 1-kt \le \frac{m+1}n$.
In each subinterval $I_m$, we use $ (1-kt)^{-1/2} \le (\frac{n}m)^{1/2}$ and integrate $\int_{I_m} (1-t)^{-\frac12}dt$. Then, we obtain a bound
\[\textstyle
	K(k) \le C \sum_{m=1}^{n-1} \frac1m \le C|\log (1-k)|.
\]
The lower bound can be obtained in a similar way.

Further, using the relations $\frac{d}{dk}K(k)=\frac{E(k)}{k(1-k^2)} - \frac{K(k)}k$
and $\frac{d}{dk}E(k) = \frac{E(k)}k - \frac{K(k)}{k}$, one sees from \eqref{E:L0pf5} that
\begin{align*}
L_0'(\zeta)
={}&\textstyle
\frac{(\zeta+1)(\zeta^2+1)}{\pi\zeta} E(\frac{2\sqrt{\zeta}}{1+\zeta} ) - \frac{(\zeta+1)(\zeta-1)^2}{\pi\zeta} K(\frac{2\sqrt{\zeta}}{1+\zeta})\\
L_0''(\zeta)
={}&\textstyle
\frac{(\zeta+1)(2\zeta^2-1)}{\pi\zeta^2} E(\frac{2\sqrt{\zeta}}{1+\zeta} ) - \frac{(\zeta-1)(2\zeta^2+1)}{\pi\zeta^2} K(\frac{2\sqrt{\zeta}}{1+\zeta}) 
\end{align*}
and so that
\[\textstyle
	|L_0'''(\zeta)| \ge \frac{(2\zeta^2+1)}{\pi\zeta^2}| K(\frac{2\sqrt{\zeta}}{1+\zeta})| + O(1)
	\ge c |\log (1-\zeta)| + O(1) \to \I
\]
as $\zeta\to1$.
Thus $L_0(\zeta) \in C^2(\R_+)$ but $L_0(\zeta) \not\in C^3(\R_+)$.
It is easy to see that $L_0(\zeta) \in C^\I (\R_+\setminus\{1\})$.

\begin{figure}[h]
\includegraphics{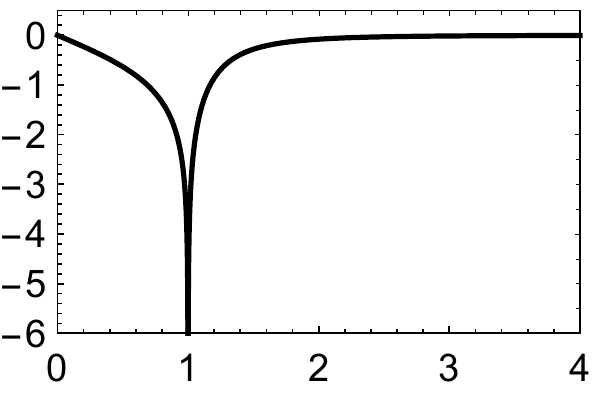}
\caption{A graph of  $L_0'''(\zeta)$}
\end{figure}

\subsection{Asymptotics}
It is well-known that
\begin{align*}
	E(k) ={}& \tfrac\pi2 -\tfrac\pi 8 k^{2} - \tfrac{3\pi}{128}k^4 -\tfrac{5\pi}{512}k^6+ O(k^{8}),\\
	K(k) ={}& \tfrac\pi2 +\tfrac\pi 8 k^{2} + \tfrac{9\pi}{128}k^{4}+\tfrac{25\pi}{512}k^6+ O(k^{8})
\end{align*}
as $k\to0$ (see e.g.\ \cite{Lawden}). By \eqref{E:L0pf5} and \eqref{E:Kk1}, we see that
$L_0(\zeta) \to 1$ as $\zeta\downarrow0$.
Further,
\begin{align*}
	L_0(\zeta) ={}&\textstyle
	\frac{2(1+\zeta)(\zeta+3)}{3\pi}E(\frac{2\sqrt{\zeta}}{1+\zeta} ) + \frac{(1+\zeta)(1-\zeta)^2}{3\pi}(E(\frac{2\sqrt{\zeta}}{1+\zeta} )-K(\frac{2\sqrt{\zeta}}{1+\zeta} ))\\
	={}&\textstyle
	(\frac{2\zeta^2}{3\pi}+\frac{8\zeta}{3\pi}+\frac2\pi)(\tfrac\pi2 -\frac\pi2 \zeta^{-1} + \frac{5\pi}8\zeta^{-2} + O(\zeta^{-3})) \\
	&{}\textstyle + (\frac{\zeta^3}{3\pi}-\frac{\zeta^2}{3\pi}-\frac{\zeta}{3\pi}+O(1))(-\pi \zeta^{-1} +\tfrac{\pi}{2}\zeta^{-2}-\tfrac{3\pi}{4}\zeta^{-3}+ O(\zeta^{-4}) )\\
	={}&\tfrac32\zeta+O(\zeta^{-1})
\end{align*}
as $\zeta\to\I$. In particular, we have $|L_0(\zeta)| \lesssim \Jbr{\zeta}$ for $\zeta >0$. 

\begin{figure}[h]
\includegraphics{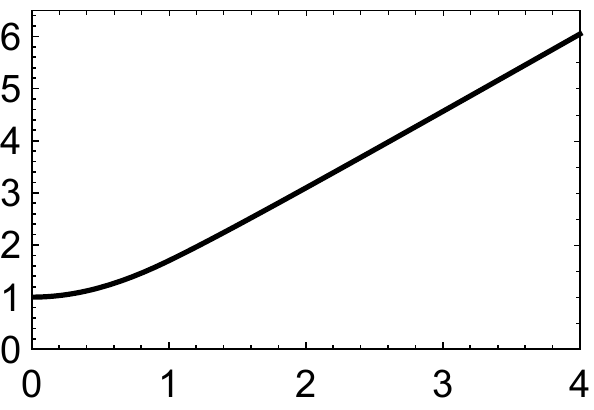}
\caption{A graph of  $L_0(\zeta)$}
\end{figure}

\section{Properties of $L_n(\zeta)$ in two dimensions}\label{S:Lnestpf}

In this appendix, we prove Proposition \ref{P:Lnest}. Namely, we shall show that $L_n(\zeta)$ defined by
\begin{equation}\tag{\ref{E:Lndef}}\textstyle
	L_n(\zeta) = \frac1{2\pi} \int_{0}^{2\pi} |1+\zeta e^{-i\Theta}|(1 +\zeta e^{-i\Theta}) e^{-in\Theta}d\Theta,
\end{equation}
is in $C^1(\R_+) \cap C^\I (\R_+ \setminus \{1\})$ for $n\in \R$, and obeys the bound
\begin{equation}\tag{\ref{E:Lnreg}}
\max_{k=0,1,2}\sup_{n \in \mathbb{Z}} \sup_{\zeta \in [\rho_0^{-1},\rho_0]\setminus\{1\}}\Jbr{n}^{3-k}|L^{(k)}_n(\zeta)| \lesssim_{\rho_0} 1.
\end{equation}
Recall that the identity
\begin{equation}\tag{\ref{E:reflection2D}}
	\zeta^{-2} L_{-n}(\zeta) 
	=   L_{n-1}(1/\zeta)
\end{equation}
is valid. Hence, we may restrict our attention on the case $n\ge0$.

\begin{proof}[Proof of Proposition \ref{P:Lnest}]
We may let $n\neq0$ since the estimate follows from the explicit formula when $n=0$.
Remark that
\begin{align*}
{L}_n(\zeta) 
&=\textstyle \int_0^{2\pi} h(\zeta, \theta)
 e^{-in\theta}d\theta,&
 h(\zeta, \theta) &:= \sqrt{1+\zeta^2+2\zeta\cos \theta}
[\zeta+e^{-i\theta}].
\end{align*}
For each $\zeta \neq1$, $h(\zeta,\cdot)$ is a $2\pi$-periodic smooth function.
Hence, by integration by parts with respect to $\theta$, the desired estimates follow if we show
\[\textstyle
	\sup_{\zeta \in [\rho_0^{-1},\rho_0]\setminus\{1\}}\int_{-\pi}^{\pi} |\partial_\theta^{3-k} \partial_\zeta^{k} h(\theta,\zeta)| d\theta
	\lesssim_{\rho_0} 1
\]
for $k=0,1,2$. 
By symmetry, we may only consider the integrals over $[0,\pi]$ in what follows.
The following estimate will be useful in the estimates below: For $\theta \in (0, \pi)$,
\begin{equation}\label{E:Lnest_key}
\tfrac{1}{\sqrt{1+\zeta^2+2\zeta\cos\theta}} 
= \tfrac{1}{\sqrt{(\zeta+\cos\theta)^2 + \sin^2\theta}}
\le
\min \( \tfrac{1}{|\zeta+\cos\theta|},
\tfrac{1}{|\sin\theta|}\).
\end{equation}
We also note that $|\zeta+e^{ -i\theta}|=\sqrt{1+\zeta^2+2\zeta\cos\theta}$ and $|1+\zeta \cos \theta| = |e^{i\theta}(\zeta+e^{-i\theta}) - i \zeta \sin \theta| \le |\zeta+e^{-i\theta}| + \zeta|\sin \theta|$.

We first consider the case $k=0$.
\begin{align*}
& \partial_\theta^3 h(\zeta, \theta) \\
&{}=  \(\tfrac{\zeta \sin \theta}{\sqrt{1+\zeta^2+2\zeta\cos \theta}}+ \tfrac{-3\zeta^2 \sin \theta \cos \theta}{(1+\zeta^2+2\zeta\cos \theta)^{3/2}}+ \tfrac{-3 \zeta^3 \sin^3 \theta}{(1+\zeta^2+2\zeta\cos \theta)^{5/2}}\) [\zeta+e^{-i\theta}]\\
&\quad - 3  \(\tfrac{-\zeta \cos \theta}{\sqrt{1+\zeta^2+2\zeta\cos \theta}}+ \tfrac{-\zeta^2 \sin^2 \theta}{(1+\zeta^2+2\zeta\cos \theta)^{3/2}}\) ie^{-i\theta}\\
&\quad - 3  \(\tfrac{-\zeta \sin \theta}{\sqrt{1+\zeta^2+2\zeta\cos \theta}}\)e^{-i\theta}
+i \sqrt{1+\zeta^2+2\zeta\cos \theta} e^{-i\theta}\\
&{}=: (I_{0,1}+ I_{0,2} + I_{0,3}) + (I_{0,4} + I_{0,5}) + I_{0,6} + I_{0,7}.
\end{align*}
In light of \eqref{E:Lnest_key}, it holds that $|I_{0,1}|+ |I_{0,6}|+ |I_{0,7}| \le 5\zeta +1$.
We have
\[
	I_{0,4}+I_{0,5} = 3ie^{-i\theta}\zeta\tfrac{(\cos \theta +\zeta)(1+\zeta \cos \theta)}{(1+\zeta^2+2\zeta\cos \theta)^{3/2}}
\]
and
\[
	I_{0,2} + I_{0,3} = -3\zeta^2 \sin \theta (\zeta+e^{-i\theta})\tfrac{(\cos \theta +\zeta)(1+\zeta \cos \theta)}{(1+\zeta^2+2\zeta\cos \theta)^{5/2}}.
\]
Therefore, by using \eqref{E:Lnest_key}, we obtain
\[\textstyle
	\left| \sum_{k=1}^7 I_{0,k} \right| \le 5\zeta +1 + 3\zeta(1+\zeta)\tfrac{|\zeta+\cos \theta|}{1+\zeta^2+2\zeta\cos \theta}.
\]
The desired estimate follows if we show
\begin{equation}\label{E:Lnest_k2}\textstyle
	\int_0^\pi \frac{ |\zeta + \cos \theta|}{1+\zeta^2+2\zeta\cos\theta} d\theta \lesssim_{\rho_0} 1
\end{equation}
for $\rho_0^{-1}\le \zeta \le \rho_0$.
We postpone the proof of the estimate.

We next consider the case $k=1$.
\begin{align*}
 \partial_\theta^2 \partial_\zeta h(\zeta, \theta) 
&{}= - \tfrac{ \cos \theta (1 + \zeta \cos \theta)}{(1+\zeta^2+2\zeta\cos\theta)^{3/2}}[\zeta + e^{-i\theta}]
+ \tfrac{ \zeta \sin^2 \theta }{(1+\zeta^2+2\zeta\cos\theta)^{3/2}}[\zeta + e^{-i\theta}]\\
&\quad - \tfrac{ 3\zeta \sin^2 \theta (1 + \zeta \cos \theta)}{(1+\zeta^2+2\zeta\cos\theta)^{5/2}}
[\zeta + e^{-i\theta}]
 +2i \tfrac{ \sin \theta (1 + \zeta \cos \theta)}{(1+\zeta^2+2\zeta\cos\theta)^{3/2}} e^{-i\theta}\\
&\quad -\tfrac{\zeta + \cos \theta}{\sqrt{1+\zeta^2+2\zeta\cos\theta}}
e^{-i\theta} - \tfrac{\zeta \cos \theta}{\sqrt{1+\zeta^2+2\zeta\cos\theta}}
 - \tfrac{\zeta^2 \sin^2 \theta}{(1+\zeta^2+2\zeta\cos\theta)^{3/2}}\\
&{}=: I_{1,1}+ I_{1,2}+ I_{1,3}+ I_{1,4}+ I_{1,5}+ I_{1,6}+ I_{1,7}.
\end{align*}
By \eqref{E:Lnest_key}, we see that $|I_{1,2}| + |I_{1,5}| \le \zeta + 1$. Remark that
\begin{align*}
	I_{1,1}+I_{1,6}+ I_{1,7} 
	&{}= -\tfrac{ (\zeta+\cos\theta)^2(1+\zeta \cos \theta)}{(1+\zeta^2+2\zeta\cos\theta)^{3/2}}
	+i \tfrac{ \sin\theta \cos \theta (1 + \zeta \cos \theta)}{(1+\zeta^2+2\zeta\cos\theta)^{3/2}}.
\end{align*}
Further,
\[
	|I_{1,3}| \le  \tfrac{3\zeta \sin^2 \theta |1+\zeta \cos \theta|}{(1+\zeta^2+2\zeta \cos \theta)^2}
	\le \tfrac{3\zeta |1+\zeta \cos \theta|}{1+\zeta^2+2\zeta \cos \theta}
\]
and
\[
	|I_{1,4}| \le 2\tfrac{|\zeta+\cos \theta|}{1+\zeta^2 + 2\zeta \cos \theta}.
\]
Thus, combining the above estimates and applying \eqref{E:Lnest_key}, one reach to
\[\textstyle
	\left| \sum_{k=1}^7 I_{1,k}\right| \le 2\zeta +2 + 2\tfrac{|\zeta+\cos \theta|}{1+\zeta^2 + 2\zeta \cos \theta}+ \tfrac{3\zeta +1}\zeta \tfrac{|\zeta^{-1} + \cos \theta|}{1+\zeta^{-2}+2\zeta^{-1} \cos \theta}.
\]
The desired estimate follows from \eqref{E:Lnest_k2}. Remark that the last term is also handled by regarding $\zeta^{-1}$ as $\zeta$.

We finally consider the case $k=2$. We have
\begin{align*}
& \partial_\theta \partial_\zeta^2 h(\zeta, \theta) \\
&{}=  \tfrac{2\sin\theta \cos\theta}{(1+\zeta^2+2\zeta\cos\theta)^{3/2}}
[\zeta + e^{-i\theta}]
+\tfrac{3 \zeta \sin^3\theta }{(1+\zeta^2+2\zeta\cos\theta)^{5/2}}
[\zeta + e^{-i\theta}]
-\tfrac{i\sin^2\theta}{(1+\zeta^2+2\zeta\cos\theta)^{3/2}}
 e^{-i\theta}\\
 &{}\quad - \tfrac{2\sin \theta}{\sqrt{1+\zeta^2+2\zeta\cos\theta}}
+ \tfrac{2(\zeta+\cos\theta)\zeta\sin \theta}{(1+\zeta^2+2\zeta\cos\theta)^{3/2}}\\
&{}=: I_{2,1}+ I_{2,2}+ I_{2,3}+ I_{2,4}+ I_{2,5}.
\end{align*}
We also have
\[
	I_{2,1}+I_{2,3}+I_{2,5} = \tfrac{2\sin\theta(\zeta+ \cos\theta)^2}{(1+\zeta^2+2\zeta\cos\theta)^{3/2}}
-\tfrac{3i\sin^2\theta \cos\theta}{(1+\zeta^2+2\zeta\cos\theta)^{3/2}}- \tfrac{\sin^3\theta}{(1+\zeta^2+2\zeta\cos\theta)^{3/2}}
\]
and
\begin{align*}
	I_{2,2}-\tfrac{3i\sin^2\theta \cos\theta}{(1+\zeta^2+2\zeta\cos\theta)^{3/2}} &{}= 
\tfrac{3 \zeta \sin^3\theta(\zeta + \cos \theta) }{(1+\zeta^2+2\zeta\cos\theta)^{5/2}}
-\tfrac{3i\sin^2\theta( \zeta + \cos \theta)(1+\zeta \cos \theta) }{(1+\zeta^2+2\zeta\cos\theta)^{5/2}}\\
&{}=-\tfrac{3ie^{i\theta}\sin^2\theta( \zeta + \cos \theta)(\zeta +e^{-i\theta}) }{(1+\zeta^2+2\zeta\cos\theta)^{5/2}}.
\end{align*}
In light of \eqref{E:Lnest_key}, we see that $|I_{2,4}|\le 2$ and
\[
	|I_{2,1}+ I_{2,2}+ I_{2,3}+ I_{2,5}|
	\le 3+ \tfrac{3\sin^2\theta |\zeta + \cos \theta|}{(1+\zeta^2+2\zeta\cos\theta)^{2}}.
\]
Hence, using \eqref{E:Lnest_key} again, we have
\[\textstyle
	\left| \sum_{k=1}^5 I_{2,k} \right| \le 5 + 3 \tfrac{|\zeta + \cos \theta|}{1+\zeta^2+2\zeta\cos\theta}.
\]
The desired estimate follows from \eqref{E:Lnest_k2}.

To complete the proof, let us now turn to the proof of \eqref{E:Lnest_k2}.
By changing variable by $t=\tan (\theta/2)$, one has
\[\textstyle
	\int_0^\pi \frac{|1+\zeta \cos \theta| }{1+\zeta^2+2\zeta\cos \theta} d\theta
	= \int_0^\I \frac{|1+\zeta+ (1- \zeta) t^2|}{(1+\zeta)^2 + (1-\zeta)^2 t^2} \frac{2}{1+t^2}dt
\]
We have
\[\textstyle
\int_0^\I \frac{(1+\zeta)}{(1+\zeta)^2 + (1-\zeta)^2 t^2} \frac{2}{1+t^2}dt
\le \frac{2}{1+\zeta} \int_0^\I  \frac{dt}{1+t^2} = \frac{\pi}{1+\zeta}.
\]
On the other hand,
\begin{align*}\textstyle
	\int_0^\I \frac{|(1- \zeta) t^2|}{(1+\zeta)^2 + (1-\zeta)^2 t^2} \frac{2}{1+t^2}dt
	&{}\le \textstyle \int_0^\I \frac{2|1- \zeta|}{(1+\zeta)^2 + (1-\zeta)^2 t^2}dt\\
	&{}= \textstyle\frac{2}{1+\zeta}\int_0^\I \frac{\frac{1+\zeta}{|1- \zeta|}}{(\frac{1+\zeta}{|1-\zeta|})^2 +  t^2}dt
	=\frac{\pi}{1+\zeta}.
\end{align*}
This completes the proof of \eqref{E:Lnest_key}.
\end{proof}

\subsection*{Acknowledgments}
S.M. was supported by JSPS KAKENHI Grant Numbers JP17K14219, JP17H02854, and JP17H02851.
J.S was supported by JSPS KAKENHI Grant Number JP17H02851.


\providecommand{\bysame}{\leavevmode\hbox to3em{\hrulefill}\thinspace}
\providecommand{\MR}{\relax\ifhmode\unskip\space\fi MR }
\providecommand{\MRhref}[2]{%
  \href{http://www.ams.org/mathscinet-getitem?mr=#1}{#2}
}
\providecommand{\href}[2]{#2}

\end{document}